\newcommand{\<}{\langle}
\newcommand{\kt}{\textbf{\textsl{k}}}
\newtheorem{ass}{Condition}
\newtheorem{ass1}{Condition}
\newtheorem{theorem}{Theorem}[section]
\newtheorem{lemma}[theorem]{Lemma}
\newtheorem{proposition}[theorem]{Proposition}
\newtheorem{corollary}[theorem]{Corollary}
\theoremstyle{definition}
\newtheorem{definition}[theorem]{Definition}
\numberwithin{equation}{section}
\begin{document}

\Large

\noindent
\textbf{Theorems of Barth-Lefschetz type and Morse Theory
on the space of paths in Homogeneous spaces}



\vspace{.7cm}

\normalsize

\noindent
\textbf{Chaitanya Senapathi} \newline 
\noindent
Tata Institute of Fundamental Research
\newline 
\noindent
senapath@math.tifr.res.in 

\vspace{.7cm}

\begin{abstract}
Homotopy connectedness theorems for complex submanifolds of homogeneous spaces (sometimes referred to as theorems of Barth-Lefshetz type) have been established by a number of authors. Morse Theory on the space of paths lead to an elegant proof of homotopy connectedness theorems for complex submanifolds of Hermitian symmetric spaces. In this work we extend this proof to a larger class of compact complex homogeneous spaces. 
\end{abstract}


\section{Introduction}
\label{intro}

In the 1920’s Lefschetz \cite{Le} stated the following theorem now known as
the Lefschetz theorem on hyperplane sections. Let $H \subset \mathbb{P}^v$
be a connected complex submanifold of complex dimension $n$. Let $H$ be a hyperplane and
$N \cap H$ be a non singular hyperplane section. Then the relative cohomology
groups satisfy:
$$H^{j}(N,N \cap H ,\mathbb{C}) = 0 \;\;\;\;\;\;\;\; j \leqslant n - 1$$

Fifty years later Barth \cite{B} generalized Lefschetz'theorem: Let $M,N \in \mathbb{P}^{v}$
be complex submanifolds of complex dimensions $m,n$ respectively. If $M$
and $N$ meet properly, then $$H_j(N ,N \cap M) = 0 \text{       ,    } j \leq min(n+m - v; 2m - v+1).$$
Generalizations of Barth's results to homotopy groups were first obtained
by Larsen \cite{La}, Barth-Larsen \cite{B-L} and later by Sommese. Sommese \cite{S1} \cite{S2} and Goldstein \cite{G} generalized these results to submanifolds of generalized flag manifolds, i.e manifolds of the form $G^c/P$ where $G^c$ is a semi-simple complex Lie group and $P$ a parabolic subgroup. Fulton and Lazarsfeld \cite{F-L} proved a stronger version of the result for $\mathbb{P}^{v}$. Later Sommese and Van de Ven \cite{S-V} proved the stronger version of the result for general flag manifolds. 

In 1961 T. Frankel \cite{F} proved a “connectedness” theorem for complex
submanifolds of a K\"{a}hler manifold of positive holomorphic sectional curvature: Let $V$ be a complete K\"{a}hler manifold of positive holomorphic sectional ¨
curvature and of complex dimension $v$. Let $M,N \subset V $ be compact complex
submanifolds of dimensions $m$ and $n$, respectively. If
$m + n \geq v$ then $M$ and $N$ must intersect. Later \cite{K-W}and \cite{S-W} expanded on this idea 
to prove the Barth-Lefschetz theorems on a class of generalized flag manifolds, namely Hermitian Symmetric Spaces and hence reproduced the results of \cite{S1}, \cite{S2} and \cite{G}.

In this paper we extend the theorem of \cite{K-W} and \cite{S-W} to a larger class of generalized flag manifolds. In the main theorem of the work we deal with the case when $G^c$ is simple. 

\begin{theorem}
Let $G^c$ be a simple complex Lie group and $P$ be a parabolic subgroup. Let $V$ be the complex homogeneous space $G^c/P$ with dimension $v$. Let $M,N \subset V $ be compact complex submanifolds dimension $m$ and $n$ respectively. Then there exists a number $\ell$ and a number $\lambda_0 = m+n-(v-\ell)-v$ such that
$$\iota_{*}:\pi_j(N , N\cap M) \rightarrow \pi_j(V,M)$$ is an isomorphism for $j \leq \lambda_0$ and a surjection for $j = \lambda_0+1$.  

\begin{enumerate}[(i)]
 
 \item If $G^c =SL_{r+1}(\mathbb{C})$  then $\ell = r$
 \item If $G^c =SO_{2r}(\mathbb{C})$  then $\ell = 2r-3$
 \item If $G^c =SO_{2r+1}(\mathbb{C})$  then $\ell = 2r-2$
 \item If $G^c =Sp_{r}(\mathbb{C})$  then $\ell = r$
 \item If $G^c = E_6$ ,$E_7$ ,$E_8$ then $\ell= 11 ,17 \text{ and  }29$ respectively 

 \end{enumerate}
\label{realmainthm}
\end{theorem}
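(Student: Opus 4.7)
The strategy is to extend the Morse-theoretic argument of Kulkarni--Wood \cite{K-W} and Schoen--Wolfson \cite{S-W} from the Hermitian symmetric setting to a general flag manifold $V=G^c/P$. Equip $V$ with a $G$-invariant Kähler metric arising from an $\mathrm{Ad}$-invariant form on the compact real form of $\mathfrak{g}^c$, and let $\mathcal{P}=\mathcal{P}(V;M,N)$ be the Hilbert manifold of $H^1$ paths $\gamma\colon[0,1]\to V$ with $\gamma(0)\in M$ and $\gamma(1)\in N$. The energy functional $E(\gamma)=\tfrac12\int_0^1|\dot\gamma|^2\,dt$ is smooth and satisfies the Palais--Smale condition; its absolute minima are the constant paths at points of $M\cap N$, and the remaining critical points are geodesics of $V$ meeting $M$ and $N$ orthogonally at their endpoints.

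The crux of the argument is a lower bound on the Morse index of each non-minimal critical point $\gamma$. The Hessian of $E$ at $\gamma$ is the standard Jacobi index form, corrected at the endpoints by the second fundamental forms of $M$ and $N$. Since $M$ and $N$ are complex submanifolds of the Kähler manifold $V$, every admissible variation field $X$ along $\gamma$ may be paired with the admissible variation $JX$, where $J$ is the Kähler complex structure parallel-transported along $\gamma$; the sum of their index form values is controlled by the holomorphic bisectional curvature of $V$ against $\dot\gamma$. Denoting by $v-\ell$ the maximum complex dimension of a subspace of $T_{\gamma(0)}V$ on which this bisectional form degenerates against a worst-case $\dot\gamma$, a boundary-matching count of parallel variations from $T_{\gamma(0)}M$ and $T_{\gamma(1)}N$ yields
$$\mathrm{ind}(\gamma)\;\geq\;\lambda_0+1.$$

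By Morse--Bott theory applied to $E$ on $\mathcal{P}$, the inclusion $M\cap N\hookrightarrow\mathcal{P}$ is then a $(\lambda_0+1)$-equivalence. The endpoint evaluation $\mathcal{P}\to N$ is a fibration whose fiber over $q\in N$ is the space of paths from $M$ to $q$, and this fiber is homotopy equivalent to $M$; hence $\mathcal{P}$ is, up to homotopy, the pullback of $M\to V\leftarrow N$. The connectivity of the inclusion $M\cap N\hookrightarrow\mathcal{P}$ therefore translates, via the long exact sequence associated to this homotopy pullback, into the desired statement that $\iota_{*}\colon\pi_j(N,N\cap M)\to\pi_j(V,M)$ is an isomorphism for $j\le\lambda_0$ and a surjection for $j=\lambda_0+1$.

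The principal obstacle is the determination of $\ell$ in each case. The integer $v-\ell$ is a Lie-theoretic invariant of the pair $(G^c,P)$: using the root-space decomposition of $\mathfrak{g}^c$ and the formula for the Kähler curvature on $G^c/P$ in terms of structure constants, finding $v-\ell$ reduces to the combinatorial problem of locating the largest complex subspace on which the bracket against the direction $\dot\gamma$ vanishes in an appropriate sense. For the classical groups $SL_{r+1}$, $SO_{2r}$, $SO_{2r+1}$, $Sp_r$ the worst-case direction can be identified with a specific simple root configuration, recovering the stated $\ell$ uniformly in the choice of parabolic $P$. The exceptional cases $E_6,E_7,E_8$ will require a more delicate analysis of the root system, using the highest root and the known dimensions of the maximal abelian nilpotent subspaces to pin down the values $\ell=11,17,29$ respectively.
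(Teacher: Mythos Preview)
Your outline follows the Schoen--Wolfson template faithfully, but it has a genuine gap at the very first geometric step: you propose to ``equip $V$ with a $G$-invariant K\"ahler metric'' and then run the $J$-averaging argument on the index form, controlling $E_{**}(X,X)+E_{**}(JX,JX)$ by holomorphic bisectional curvature. That argument requires the K\"ahler metric to have non-negative bisectional curvature, and by a theorem of Mok this is available \emph{only} when $G^c/P$ is (covered by a product of) Hermitian symmetric spaces. For a general parabolic $P$ no such K\"ahler metric exists, and without it your index bound does not follow: the $J$-averaged Hessian need not be nonpositive, the boundary terms do not cancel, and the dimension count you sketch cannot be carried out.

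The paper confronts exactly this obstruction. It abandons the K\"ahler metric and works instead with the \emph{normal metric} induced from the bi-invariant metric on the compact form $G$; this metric has non-negative sectional curvature but is not K\"ahler, so $J$ is not parallel for the Levi--Civita connection and the Schoen--Wolfson averaging trick breaks down as stated. Two new ingredients repair the argument. First, a modified connection (the ``complex-hat connection'' $\widehat{\nabla}=\nabla+\tfrac12 R_{\dot\gamma}$, built from the canonical connection and the bracket tensor) is used to parallel-transport variation fields; it commutes with $J$ and is tailored so that the averaged second variation $E_{**}^{\mathbb C}$ reduces to a manifestly nonpositive integrand involving $|R_{\dot\gamma}(X)|^2$ and $|[X,\dot\gamma]_{\kt}|^2$. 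Second, for the root directions on which this integrand vanishes, a further operator $I$ satisfying $I^2=-\mathrm{Id}$ and $IJ=-JI$ is constructed from the bracket with a chosen root component $\dot\gamma_\delta$; averaging over the full quaternionic quadruple $\{Z,\bar I Z,\bar J Z,\bar I\bar J Z\}$ is what finally forces the index down.

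Your description of $\ell$ is also off target. In the paper $\ell$ is not the codimension of a degeneracy locus of bisectional curvature, nor is it computed via maximal abelian nilpotent subalgebras. It is the purely combinatorial quantity $\ell=\tfrac12|\mathcal S_\delta|+|\mathcal T_\delta|$ attached to a suitably chosen (``superminimal'') root $\delta\in\Gamma$, and one must verify two root-system conditions on $\mathcal S_\delta,\mathcal T_\delta$ to make the quaternionic averaging go through. For the simply-laced types $A,D,E$ these conditions hold for every long root and $\ell$ is computed directly from $|W_\delta|$; for $B_r$ and $C_r$ the conditions can fail when $\delta$ is short, and the paper handles those cases by an ad hoc choice of subsets $\mathcal S^*_\delta,\mathcal T^*_\delta$ or by moving $\dot\gamma$ with an isometry in $K$.
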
 

\begin{corollary}
Suppose that $V,M ,$ and $N$ satisfy the same hypotheses as Theorem \ref{realmainthm} and $\ell$ remains the same then 

\begin{enumerate}[(a)]

\item If $j \leq 2m-v-(v-\ell)+1 $ then $\pi_j (V,M) = 0$ 

\item If $j \leq min (2m-v-(v-\ell)+1 ,n+m -v -(v-\ell)) \text{  then  } \newline \pi_j(N ,N \cap M)=0$ 

\end{enumerate}
\end{corollary}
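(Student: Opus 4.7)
The plan is to derive both statements by specializing or chaining the isomorphism/surjection of Theorem \ref{realmainthm}. Both parts are formal consequences once the theorem is in hand, so no new geometry enters.

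For part (a), I would apply Theorem \ref{realmainthm} with the choice $N=M$. In that case $N \cap M = M$, so $\pi_j(N, N\cap M) = \pi_j(M,M) = 0$ for every $j$. With $n=m$ the threshold reads $\lambda_0 = 2m-(v-\ell)-v$, and Theorem \ref{realmainthm} yields that $\iota_*$ is surjective for $j = \lambda_0 + 1 = 2m - v - (v-\ell)+1$ and an isomorphism for $j \leq \lambda_0$. Since the domain $\pi_j(M,M)$ vanishes, the image $\pi_j(V,M)$ must vanish as well in the stated range $j \leq 2m-v-(v-\ell)+1$. The only mild point to check is that the hypotheses of Theorem \ref{realmainthm} are preserved when $N=M$, which is immediate because $M$ is a compact complex submanifold of $V$ by assumption.

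For part (b), I would chain two applications of Theorem \ref{realmainthm}. With the original $M$ and $N$, the theorem gives an isomorphism
\[
\iota_{*}:\pi_j(N, N\cap M)\xrightarrow{\;\cong\;}\pi_j(V,M)
\qquad\text{for } j\leq \lambda_0 = n+m-v-(v-\ell).
\]
By part (a), the target vanishes whenever $j\leq 2m-v-(v-\ell)+1$. Intersecting the two ranges gives exactly $j\leq \min\bigl(2m-v-(v-\ell)+1,\; n+m-v-(v-\ell)\bigr)$, on which $\pi_j(N,N\cap M)=0$.

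The main step is really just bookkeeping of the numerical thresholds; no additional obstacle is expected beyond verifying that the specialization $N=M$ is admissible in the setup of Theorem \ref{realmainthm}. In particular, one should note that the corollary uses the same value of $\ell$ throughout, as this is determined by the ambient simple group $G^{c}$ rather than by the pair $(M,N)$.
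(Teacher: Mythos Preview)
Your argument is correct and is exactly the standard derivation; the paper itself states the corollary without proof, and your specialization $N=M$ for part (a) followed by chaining the isomorphism with the vanishing of the target for part (b) is the intended route.
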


In part (iii) of Theorem \ref{realmainthm}, the case where $G^c = SO_{2r+1}(\mathbb{C})$ and $P$ is the parabolic corresponding to the painted Dynkin diagram with all long roots painted, the result can be improved. By a Theorem of \cite{O} the corresponding homogeneous space can be written in the form $SO_{2r+2}(\mathbb{C})/P$ where $P$ is a parabolic subgroup of $SO_{2r+2}(\mathbb{C})$, in this case $\ell$ can be improved to $2r-1$.

Also the case $G^c = Sp_{r}(\mathbb{C})$ and for a special type of parabolic subgroup $P$, $G^c/P$ is biholomorphic to $CP^{2r-1}$ (\cite{O}) so the number $\ell$ can be improved to $2r-1$. The parabolic P is a maximal parabolic containing a copy of $Sp_{r-1}(\mathbb{C})$.

The results obtained from Theorem \ref{realmainthm} also follow from the work of \cite{S1}, \cite{S2} and \cite{G}. In \cite{S1} and \cite{S2} Sommese shows that the number $\ell$ can be replaced by the co-ampleness of the co-tangent bundle of the respective homogeneous space, and in \cite{G} the co-ampleness of the co-tangent bundle are calculated. In the case where $G^c = Sp_{r}(\mathbb{C})$, $F_4$ or $G_2$ and $P$ is a parabolic such that the corresponding painted Dynkin diagram contains a long root, the approach of \cite{S1}, \cite{S2} and \cite{G} leads to stronger results. All other cases treated in this work, the results are identical to those obtained in \cite{S1}, \cite{S2} and \cite{G}. The approach taken in this work can be applied to $G_2$ and $F_4$ but this would require a tedious case by case analysis of the root systems, so this work has been deferred and not included in the present work. 

To prove these connectedness Theorems \cite{S1} and \cite{S2} apply Morse Theory locally on the ambient space but in this work we apply Morse theory on the space of paths following the work of \cite{F},\cite{S-W},\cite{K-W},\cite{N-W},\cite{Chen},\cite{FM},\cite{Fang},\cite{FMR} and \cite{W}. The basic idea of \cite{S-W} and \cite{K-W} is to demonstrate that the index of the critical points in the space of paths joining two submanifolds has the appropriate lower bound for a chosen Morse function. The Morse function that they choose on the space of paths is the energy function with respect to the invariant symmetric K\"{a}hler metric. To compute a lower bound on the index at the critical points, variational vector fields are constructed along these geodesics and used in the second variation formula.  
        
In this work we generalize the idea of \cite{S-W} and \cite{K-W}. Their argument cannot be generalized to non-symmetric homogeneous spaces as Mok\cite{M1} has shown that other homogeneous spaces that are not covers of products of Hermitian Symmetric Spaces don't posses K\"{a}hler metrics with non-negative curvature. But complex $G^c/P$, being a quotient of a compact Lie group, does posses a metric induced by the standard bi-invariant metric of the compact Lie group. This metric, which is commonly referred to as the 'normal metric' \;is what we use in this paper. This metric has non-negative curvature and is K\"{a}hler only in the case of a Hermitian Symmetric Space. Employing this metric allows us to naturally generalize the work of \cite{S-W} and \cite{K-W}.

Using the canonical connection of the compact homogeneous space, we construct a new connection referred here as the complex-hat connection. We use this connection to form variational vector fields along geodesics. This connection is invariant and is compatible with the complex structure. The connection is also amenable towards the root structure, as a result most of the computations follow naturally. We also use certain types of linear combinations of these variational vector fields and show the existence of a quaternionic structure on the linear combinations. To demonstrate a lower bound on the index, we take an average using this quaternionic structure. Fang \cite{Fang} has employed quaternionic structures to average Hessians, in the context of proving Barth-Lefshetz type theorems for quaternionic submanifolds of positive quaternionic K\"{a}hler spaces.

An outline of this paper is as follows. In Section 2 we review Morse theory on the space of paths and its relation to homotopy theory. In Section 3 we review the basic properties of reductive homogeneous spaces. In Section 4 we construct the complex-hat connection, describe its properties, and describe its relation with the second variation formula. In Section 5 we establish a lower bound on the index of geodesics in terms on an invariant of $G^c$. In the final Section we compute this invariant, thereby proving Theorem \ref{realmainthm}.

\section{Morse theory and Homotopy groups}
In this subsection we follow \cite{S-W} and talk about the Morse theory  on the space of paths, and relate the index of geodesics with the vanishing of relative homotopy groups .

 Let $V$ be a complete Riemannian manifold and let $M$ and $N$ be submanifolds with M compact and N a closed subset of V . We let $P(V ,M, N)$ denote the set of $C^{\infty}$ differentiable  
paths $\gamma : [0; 1] \rightarrow V $ such that $\gamma(0)\in M$
and $\gamma(1) \in N $.  To study the topology of the path space $\Omega(V ,M, N)$ via a Morse function we follow \cite{S-W}. In \cite{S-W}, they generalize Milnor\cite{M}'s approach and approximate the path space by finite-dimensional manifolds and employ techniques from finite-dimensional Morse theory.  Accordingly, in this section, we will describe the general setup, state the results we will need and give the appropriate references in \cite{S-W} and \cite{M}.

Let us denote the set of all piecewise smooth paths from $M$ to $N$ in $V$ by $\Omega(V;M,N)$ or simply $\Omega$. The set $\Omega(V;M,N)$ can be topologized using a natural metric. As a result, the energy of a path given by $$E(\gamma) = \int_{0}^{1} |\dot{\gamma}(t))|^2dt$$ defines a continuous map from $\Omega(V;M,N)\rightarrow \mathbb{R}$.

From \cite{S-W} it is shown that $\gamma$ is a critical point of $E$ if : 
\begin{enumerate}
\item $\gamma$ is a smooth geodesic.
\item $\gamma$ is normal to $M$ and $N$ at $\gamma(0)$ and $\gamma(1)$, respectively. 
\end{enumerate}

Let $T_{\gamma}\Omega$ denote the set piecewise smooth vector fields along $\gamma$ such that vectors at the endpoints belong to the respective submanifolds. This set is in a sense the tangent space for the path $\gamma$ in the space of paths. Let $W_1 ,W_2 \in T_{\gamma}\Omega$. If $\gamma$ is a critical point of $E$ then the second variation of $E$ along $\gamma$, is given by:

$$\frac{1}{2}E_{**}(W_1 ,W_2) = \sum_t<W_2(t),\Delta_t \frac{DW_1}{dt}> - \int_0^1<W_2,\frac{D^2W_1}{dt^2} + R(\dot{\gamma},W_1)\dot{\gamma}>$$

\noindent As in \cite{M} and \cite{S-W}, a finite dimensional approximation $B$ of broken geodesics can be constructed for the set $\Omega_c =E^{-1}([0,c]) \subset \Omega $. 
Let $\Omega^*_c =E^{-1}([0,c]) \subset \Omega$. We restate Theorem 1.2 from \cite{S-W}.

\begin{theorem} 
$E|_{B} :\Omega \rightarrow \mathbb{R}$ is a smooth map. For each $a<c$ the set $B_A = (E_{B})^{-1}([0,a])$ is compact and is a deformation retract of the set $\Omega_a$.The critical points of $E|_{B}$ are precisely the same as the critical points of $E$ in $\Omega^*_c$, that is the smooth geodesics from $M$ to $N$ intersecting $M$ and $N$ orthogonally with energy less than $c$. The index of the hessian of $E|_{B}$ at each such critical point $\gamma$ is equal to the index of $E_{**}$ at $\gamma$. \label{thm2}
\end{theorem}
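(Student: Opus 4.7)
The plan is to build $B$ as a finite-dimensional manifold of broken geodesics in the manner of Milnor \cite{M}, adapted to the boundary conditions $\gamma(0)\in M$, $\gamma(1)\in N$, and then to verify each of the four claims (smoothness, deformation retraction, coincidence of critical points, equality of indices) by a local argument in tubular/convex neighborhoods. First I would fix $c$ and choose a partition $0=t_0<t_1<\cdots<t_k=1$ fine enough that for any $\gamma\in\Omega_c$ each restriction $\gamma|_{[t_{i-1},t_i]}$ has length so short that its image lies in a totally normal convex ball (this is where the $L^2$ energy bound $\int|\dot\gamma|^2\le c$ and Cauchy--Schwarz enter, controlling the diameters uniformly). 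Then $B$ is defined to consist of those paths that are smooth geodesics on each subinterval and whose endpoints lie in $M\times N$; parametrizing an element of $B$ by its $k{+}1$ breakpoints $(p_0,p_1,\dots,p_k)\in M\times V^{k-1}\times N$ gives $B$ the structure of a smooth finite-dimensional manifold.

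Next I would establish smoothness of $E|_B$ by noting that on each short convex ball the squared Riemannian distance is smooth in both endpoints, so $E|_B(p_0,\dots,p_k)=\sum_{i=1}^k (t_i-t_{i-1})^{-1}d(p_{i-1},p_i)^2$ is smooth. Compactness of $B_a$ would follow because each breakpoint is forced by the energy bound to lie in a bounded (and hence precompact) region of $V$, while the endpoints lie in the compact set $M\times N$ and the breakpoints $p_1,\dots,p_{k-1}$ lie in a compact set determined by $a$ and the fixed partition. For the deformation retraction, I would define $r:\Omega_a\to B_a$ by replacing each $\gamma|_{[t_{i-1},t_i]}$ with the unique minimizing geodesic between its endpoints, and interpolate to a homotopy by performing this replacement progressively on $[0,s]$; continuity uses the smooth dependence of geodesics on endpoints in convex neighborhoods and the fact that this operation does not increase energy, so the homotopy stays in $\Omega_a$.

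For the coincidence of critical points, a critical point $\gamma\in B$ of $E|_B$ must, by varying each $p_i$ individually, be a smooth geodesic (no corner at any $t_i$) and must be normal to $M$ at $\gamma(0)$ and to $N$ at $\gamma(1)$; conversely any such geodesic in $\Omega^*_c$ lies in $B$ and is a critical point of the full functional by the second variation formula stated above. The final and technically most delicate step is the index equality. The key is to decompose $T_\gamma\Omega=T_\gamma B\oplus T'$, where $T_\gamma B$ consists of broken Jacobi fields (fields that are Jacobi on each $[t_{i-1},t_i]$, satisfy the boundary conditions, and are continuous at the breakpoints) and $T'$ consists of fields in $T_\gamma \Omega$ that vanish at every $t_i$. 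One then checks, using the second variation formula, that $E_{**}$ is positive definite on $T'$ (because each subinterval is shorter than any conjugate distance, by the choice of partition) and that $T_\gamma B$ is $E_{**}$-orthogonal to $T'$ (integration by parts kills the bracket terms using the Jacobi equation). Hence the index and nullity of $E_{**}$ on $T_\gamma\Omega$ coincide with those of its restriction to $T_\gamma B$, which is the Hessian of $E|_B$ at $\gamma$.

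The main obstacle I expect is this last decomposition step: one must handle the $M$- and $N$-boundary contributions in the second variation formula carefully, since the endpoint terms $\sum_t\langle W_2,\Delta_t DW_1/dt\rangle$ include not only interior corner contributions but also the shape-operator contributions from $M$ and $N$ at $t=0,1$; making the splitting $E_{**}$-orthogonal therefore requires that the broken Jacobi fields in $T_\gamma B$ be chosen to satisfy the correct Neumann-type boundary conditions adapted to the second fundamental forms of $M$ and $N$, rather than the Dirichlet conditions used in Milnor's closed-geodesic setting. Once that refined decomposition is in place, the index equality reduces to a standard linear-algebra comparison.
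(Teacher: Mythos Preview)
Your outline is essentially the standard Milnor broken-geodesic argument adapted to the two-submanifold boundary conditions, and that is exactly what the paper relies on: the paper does not give its own proof of this theorem but simply restates Theorem~1.2 of \cite{S-W}, who in turn carry out precisely the construction you describe (following \cite{M}, \S16). So your approach matches the cited one.

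One small correction: you write that the endpoints lie in the compact set $M\times N$, but in the setting of the paper only $M$ is assumed compact, while $N$ is merely a closed subset of the complete manifold $V$. The compactness of $B_a$ therefore cannot be read off from compactness of $M\times N$ directly; instead one uses that $p_0\in M$ is confined to a compact set, the energy bound $E\le a$ together with Cauchy--Schwarz forces all breakpoints $p_1,\dots,p_k$ to lie within distance $\sqrt{a}$ of $M$, hence in a compact subset of the complete manifold $V$, and then $p_k$ lies in the intersection of this compact set with the closed set $N$. With that adjustment your compactness argument goes through, and the rest of your sketch---including your careful remark about the Neumann-type boundary conditions coming from the second fundamental forms of $M$ and $N$ in the index-equality step---is correct and is exactly the refinement of Milnor's argument that \cite{S-W} supply.
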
 

\noindent
We now state an elementary lemma from \cite{M}sec 22, about functions on finite-dimensional manifolds: Let $X$ be a smooth manifold and $f:X \rightarrow \mathbb{R}$ be a smooth real-valued function with minimum value 0 such that each $X_c = f^{-1}([0,c])$ is compact. 

\begin{lemma}If the set $X_0$ of minimal points has a neighborhood $U$ with a retraction $r:U \rightarrow X_0$ and if every critical point in $X \ X_0$ has index $> \lambda_0$ then 

$$\pi_j(X,X_0) = 0 \text{   for    } 0 \leq j \leq \lambda_0$$

\end{lemma}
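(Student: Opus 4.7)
The plan is to follow Milnor's approach from \cite{M}, Section 22, adapting finite-dimensional Morse theory to track how the sublevel sets $X_c = f^{-1}([0,c])$ change homotopy type as $c$ grows.

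First, I would use the neighborhood $U$ and the retraction $r:U\to X_0$ to handle the bottom of the filtration. Since $X_0 = f^{-1}(\{0\})$ is compact (taking $c=0$ in the hypothesis) and $U$ is open, one can choose $\varepsilon>0$ small enough that $X_\varepsilon \subset U$ and that $[0,\varepsilon]$ contains no critical values other than $0$. Flowing along a gradient-like vector field for $f$ inside $U$, and composing with the retraction $r$, then produces a deformation retraction of $X_\varepsilon$ onto $X_0$. This furnishes a well-behaved base onto which handles will be attached, even though $X_0$ need not itself be a submanifold.

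Second, the classical Morse-theoretic step: for any regular $c > \varepsilon$, the basic structure theorem (cf.\ \cite{M}, Theorem 3.2) describes $X_c$ as homotopy equivalent to $X_\varepsilon$ with one cell of dimension $k$ attached for each critical point of index $k$ lying in $f^{-1}((\varepsilon,c])$. By hypothesis every such critical point has index strictly greater than $\lambda_0$, hence each attached cell has dimension at least $\lambda_0+1$. Since attaching a $k$-cell to a space $A$ yields a pair $(A\cup e^k,A)$ which is $(k-1)$-connected, an induction on the (finite, by compactness of $X_c$) number of critical values traversed gives $\pi_j(X_c,X_\varepsilon)=0$ for $0\le j\le \lambda_0$. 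Combining this with the deformation retraction from Step~1 then yields $\pi_j(X_c,X_0)=0$ in the same range.

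Finally, to promote this to the full space $X$, I would use that any continuous map from the compact pair $(D^j,S^{j-1})$ into $(X,X_0)$ has compact image and therefore factors through $X_c$ for some $c$; Step~2 then supplies the required null-homotopy. The main obstacle is really Step~1: without the hypothesized neighborhood retraction $r$, the minimum set $X_0$ need not admit any handle-body description, and the usual Morse-theoretic attachment picture breaks down at the bottom of the filtration. The retraction remedies this defect precisely by making $X_0$ a neighborhood deformation retract in $X$, so the conclusions of the handle decomposition transfer cleanly to relative homotopy groups of the pair $(X,X_0)$.
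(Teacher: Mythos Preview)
The paper does not supply its own proof of this lemma; it is quoted from Milnor \cite{M}, \S22, and used as a black box. Your outline is the standard argument found there, and the overall strategy---finitely many handles of dimension $>\lambda_0$ give $\pi_j(X_c,X_\varepsilon)=0$ for $j\le\lambda_0$, and compactness of maps from $(D^j,S^{j-1})$ passes this to $(X,X_0)$---is correct and matches the cited reference.

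One small caution on your Step~1: composing the gradient flow with $r$ does not by itself produce a \emph{deformation} retraction of $X_\varepsilon$ onto $X_0$. A bare retraction carries no homotopy from $\mathrm{id}_U$ to $i\circ r$, and gradient trajectories need not converge pointwise to $X_0$ as $t\to\infty$; so the sentence ``composing with the retraction $r$ then produces a deformation retraction'' is not justified as written. What is actually needed (and what holds in the path-space application, where $X_0\cong M\cap N$ is a submanifold with a genuine tubular neighborhood) is that $X_0$ be a neighborhood \emph{deformation} retract; the hypothesis ``retraction'' in the lemma should be read in that sense. With that reading your argument goes through.
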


Using the lemma and Theorem \ref{thm2} \cite{S-W} prove the following theorem . 

\begin{theorem} Let $V$ be a complete Complex manifold. Let ¨$M,N \subset V $ be
complex submanifolds and suppose that $M$ is compact and $N$ is a closed
subset of $V$. If every nontrivial critical point of $E$ on $\Omega$ has index  $\lambda > \lambda_0 \geq 0$ ,then the relative homotopy groups $\pi_j(\Omega,\Omega_0) = 0$  for $ 0 \leq j  \leq \lambda_0$.

\end{theorem}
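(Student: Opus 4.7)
My plan is to combine Theorem \ref{thm2} with the finite-dimensional Milnor lemma stated just above, then pass to the limit in $c$. The main point is that, although $\Omega$ is infinite dimensional, the index hypothesis together with the broken-geodesic approximation lets us reduce every relative homotopy class to one supported on some sublevel set $\Omega_c$, which is homotopy equivalent to a finite-dimensional smooth manifold on which the Milnor lemma applies directly.

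First, I would fix a level $c>0$ and apply Theorem \ref{thm2} to the broken-geodesic manifold $B=B^{(c)}$, so that $f:=E|_B$ is a smooth function on a finite-dimensional manifold whose sublevel sets $B_a$ are compact, and whose critical points (with indices) inside $B$ coincide with the critical points of $E$ in $\Omega_c^*$. The minimum set $B_0=f^{-1}(0)$ is the space of constant paths based at points of $M\cap N$; since $M$ and $N$ are complex submanifolds, $M\cap N$ (when nonempty) is a closed submanifold of $V$ and is canonically identified with $B_0$, so a tubular neighborhood in $B$ provides the required retraction neighborhood $U\to B_0$. All remaining critical points of $f$ are nontrivial critical points of $E$, hence by hypothesis have index strictly greater than $\lambda_0$. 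The Milnor lemma then yields
\[
\pi_j(B,B_0)=0 \quad \text{for } 0\le j\le \lambda_0.
\]

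Next, I would transfer this vanishing back to the path space. Theorem \ref{thm2} asserts that $B_a$ is a deformation retract of $\Omega_a$ for every $a\le c$; in particular $B=B_c$ deformation retracts onto $\Omega_0=B_0$ through the family of pairs $(B_a,B_0)$. The deformation retraction of $\Omega_c$ onto $B_c$ is compatible with the inclusion of $\Omega_0$, so the pair $(\Omega_c,\Omega_0)$ is homotopy equivalent to $(B,B_0)$, giving
\[
\pi_j(\Omega_c,\Omega_0)=0 \quad \text{for } 0\le j\le \lambda_0.
\]

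Finally, since any continuous map $(D^j,S^{j-1})\to(\Omega,\Omega_0)$ with $j\le \lambda_0+1$ has compact image, its image is contained in $\Omega_c$ for some sufficiently large $c$; hence the map factors through $(\Omega_c,\Omega_0)$, and by the previous step it is null-homotopic rel $\Omega_0$. This gives $\pi_j(\Omega,\Omega_0)=0$ for $0\le j\le \lambda_0$, as required. The only step that requires genuine care is verifying that the minimum set $\Omega_0$ admits the neighborhood retraction needed by the Milnor lemma — handled above using the complex-submanifold hypothesis to identify $\Omega_0$ with the smooth intersection $M\cap N$ (or, if $M\cap N=\varnothing$, by replacing $0$ with the infimum of $E$ and arguing that the minimizing geodesics form a smooth submanifold of $B$). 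Everything else is a direct application of Theorem \ref{thm2} and the preceding lemma.
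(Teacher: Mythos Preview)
Your approach is exactly what the paper indicates: combine Theorem~\ref{thm2} (the broken-geodesic approximation) with the Milnor lemma on the finite-dimensional manifold $B$, then exhaust $\Omega$ by the sublevel sets $\Omega_c$; the paper does not write out a proof but simply refers to \cite{S-W} for precisely this argument. One correction: the intersection $M\cap N$ of two complex submanifolds need not be a \emph{smooth} submanifold, so your tubular-neighborhood justification for the retraction $U\to B_0$ is not quite right --- the correct reason is that $M\cap N$ is a complex analytic space, hence triangulable and therefore an ANR, which is all the Milnor lemma requires.
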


As we can identify $\Omega_0$ with $M \cap N$ we  have $\pi_j(\Omega,M \cap N) = 0$  for $ 0 < j \leq \lambda_0$. Using this observation and the long exact sequence of the pair $(\Omega, N \cap M)$ and some homotopy theory, \cite{S-W} prove the following. 

\begin{theorem}
Let $V$ be a complete complex manifold. Let ¨$M,N \subset V $ be
complex submanifolds and suppose that $M$ is compact and $N$ is a closed
subset of $V$. If every nontrivial critical point of $E$ on $\Omega$ has index  $\lambda > \lambda_0 \geq 0$ ,then the homomorphism induced by the inclusion.

$$\iota_{*}:\pi_j(N , N\cap M) \rightarrow \pi_j(V,M)$$ is an isomorphism for $j \leq \lambda_0$ and a surjection for $j = \lambda_0+1$.

\label{homotopythm}
\end{theorem}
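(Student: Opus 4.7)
My approach is a homotopy-theoretic conversion of the preceding theorem ($\pi_j(\Omega,\Omega_0)=0$ for $0\le j\le\lambda_0$) into the claimed statement, by interpreting $\Omega=\Omega(V;M,N)$ as a model for the homotopy pullback of $M\hookrightarrow V\hookleftarrow N$ and running a five-lemma comparison of long exact sequences. Using the identification $\Omega_0\cong M\cap N$ already given in the preceding discussion, the hypothesis says precisely that the inclusion $M\cap N\hookrightarrow\Omega$ is $\lambda_0$-connected: an isomorphism on $\pi_j$ for $j\le\lambda_0-1$ and a surjection for $j=\lambda_0$.

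Next I would exhibit the endpoint evaluation
\[
\epsilon\colon\Omega\to N,\qquad \gamma\mapsto\gamma(1),
\]
as a Serre fibration (passing, as in Theorem~\ref{thm2}, to the broken-geodesic model, or equivalently to a Moore-path model of the same homotopy type) whose fiber over $q\in N$ is $\Omega(V;M,q)$. Choosing a basepoint $q_0\in M\cap N$ and reversing parametrisation identifies this fiber with the homotopy fiber of the inclusion $M\hookrightarrow V$ at $q_0$, giving
\[
\pi_j(\Omega(V;M,q_0))\cong\pi_{j+1}(V,M)\qquad(j\ge 0).
\]
The long exact sequence of $\epsilon$ then reads
\[
\cdots\to\pi_{j+1}(V,M)\to\pi_j(\Omega)\to\pi_j(N)\to\pi_j(V,M)\to\cdots,
\]
in which the arrow $\pi_j(N)\to\pi_j(V,M)$ is the standard composition $\pi_j(N)\to\pi_j(V)\to\pi_j(V,M)$ induced by $N\hookrightarrow V$.

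Finally, since $\epsilon|_{\Omega_0}$ is exactly the inclusion $M\cap N\hookrightarrow N$, the map $\epsilon$ is a morphism of pairs $(\Omega,\Omega_0)\to(N,N\cap M)$ and induces a commutative ladder between the long exact sequence of the pair $(N,N\cap M)$ and the fibration sequence above. Its vertical map at $\pi_\ast(N)$ is the identity, and its vertical map at $\pi_\ast(M\cap N)$ is the inclusion $\Omega_0\hookrightarrow\Omega$, whose connectivity was determined in the first step. Applying the five lemma in each degree $j\le\lambda_0+1$ then yields that $\iota_\ast\colon\pi_j(N,N\cap M)\to\pi_j(V,M)$ is an isomorphism for $j\le\lambda_0$ and a surjection for $j=\lambda_0+1$.

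The main obstacle I expect is the naturality of the comparison ladder: one must check that the connecting map of $\epsilon$ agrees, under the identification of its fiber with $\mathrm{hofib}(M\hookrightarrow V)$, with the boundary map of the pair $(V,M)$, and that the arrow on relative homotopy produced by $\epsilon_\ast$ and the five lemma coincides with the inclusion-induced map $\iota_\ast$ rather than a twisted variant. Ancillary issues --- replacing $\Omega$ with a model in which $\epsilon$ is an honest Serre fibration, and handling the low-degree terms $\pi_0,\pi_1$ where the ``relative group'' is only a pointed set or a group --- are standard but require care.
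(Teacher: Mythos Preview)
Your proposal is correct and follows precisely the approach the paper indicates: the paper does not give a self-contained proof but refers to \cite{S-W}, noting only that one uses the identification $\Omega_0\cong M\cap N$, the long exact sequence of the pair $(\Omega,N\cap M)$, and ``some homotopy theory''. Your argument---modelling $\epsilon:\Omega\to N$ as a fibration with fiber $\Omega(V;M,q_0)$, identifying $\pi_j$ of the fiber with $\pi_{j+1}(V,M)$, and running a five-lemma comparison---is exactly the content of that phrase and is the argument given in \cite{S-W}.
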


\section{Compact homogeneous spaces}

Let $G$ be a compact semisimple lie group. Let $T_pG$ be the tangent space of $G$ at $p$. We denote the lie algebra by $\mathfrak{g}$ with lie bracket $[\cdot,\cdot]$. We identify $\mathfrak{g}$ with $T_eG$ (the tangent space at the identity).
Let $Ad :G \rightarrow Gl(\mathfrak{g})$ denote the adjoint representation of the group $G$ and let 
 $ad :\mathfrak{g} \rightarrow gl(\mathfrak{g})$ denote the adjoint representation of the lie algebra $\mathfrak{g}$.Then it is well known that $ad(X)Y=[X,Y] \;\; \forall \;\;X,Y\; \in \mathfrak{g}$. Since $G$ is compact and semisimple the Killing form is negative definate. Let $<\cdot,\cdot>$ be the left invariant metric on $G$, which when restricted to the identity is negative of the killing form on  $\mathfrak{g}$. Since the Killing form is invariant under the Adjoint action, the metric satisfies the following property $<X,[Y,Z]> = <[X,Y],Z> \;\; \forall \;\;X,Y,Z\;\; \in \mathfrak{g}$.

Let $K$ be a closed subgroup then $V = G/K $ will be a homogeneous space. Let ${\kt}$ denote the lie subalgebra of $\mathfrak{g}$. Let $\mathfrak{m}$ be the orthogonal compliment of ${\kt}$ and so we have $\mathfrak{g}= {\kt} \oplus \mathfrak{m}$. Since the metric is $Ad$ invariant, in particular it is $Ad_k$ invariant $\forall k \in K$, this implies that $Ad_{|K}(\mathfrak{m})\subseteq \mathfrak{m}$. This makes $V = G/K$ a reductive homogeneous space. This decomposition allows us to identify $T_{\bar{e}}G/K$ with $\mathfrak{m}$ (where $\bar{e} \in G/K$ is the image of the identity by the quotient map $\pi :G \rightarrow G/K$) and this gives us a $K$ action on $T_{\bar{e}}G/K$ which we refer to as the adjoint action. 

Now the group $G$ naturally acts on $G/K$, this action is denoted by $L_g$ for $g \in G$. As a consequence $K$ also acts on $T_{\bar{e}}G/K$, this action is generally known as the isotropy representation. It is easy to see that the isotropy representation and the adjoint action are the same. i.e $\pi_*Ad_k(X)= L_{k*} \pi_*X\;\;\; \forall X \in \mathfrak{m}, \forall k \in K$. 

\subsection{The bracket tensor}
We observe that if a covarint tensor $\omega \in \Lambda^{(1,r)} T_{\bar{e}}G/K $ is $Ad_{|K}$ invariant then $\omega$ can be extended to a $G$ invariant r-tensor. Let $X,Y \in \mathfrak{m}$, denote the $\mathfrak{m}$ component of the bracket by $[X,Y]_{\mathfrak{m}}$. As $Ad(g)$ commutes with the lie bracket $\forall g\in G$, we have the following $Ad^{G}(k)[X,Y]_{\mathfrak{m}} = [Ad^{G}(k)X,Ad^{G}(k)Y]_{\mathfrak{m}}$. Using the identification of $T_{\bar{e}}G/K$ with $\mathfrak{m}$ we have that $[\cdot,\cdot]_{\mathfrak{m}}$ is $Ad_{|K}$ invariant. This global tensor will be denoted by $[\cdot,\cdot]_{\mathfrak{m}}$ and will be referred to as the \textit{bracket tensor}. The bracket tensor also satisfies the following. 
\begin{align} 
<X,[Y,Z]_{\mathfrak{m}}>_p = <[X,Y]_{\mathfrak{m}},Z>_p   \label{associativity}
\end{align}
for $X,Y,X \in T_p(G/K)$. This property will be referred to as \textit{associativity}.
\newline As $Ad^{G}(k)[X,Y]_{{\kt}} = [Ad^{G}(k),Ad^{G}(k)]_{{\kt}}$ and the fact that the metric $<\cdot,\cdot>$ is invariant under $K$, as a result $|[X,Y]_{{\kt}}|^2$ is $Ad_{|K}$ invariant. Thus this $(0,2)$ tensor can also be extended to the whole space.

\subsection{The canonical connection} On a reductive homogeneous space there exists an invariant connection called the 'canonical connection' introduced first by Nomizu \cite{N}. On a compact symmetric space, this connection is the Levi Civita connection. We will briefly go through its construction and basic properties. 

We consider $G$ as a principal fiber bundle over the space $G/K$ with structure group $K$. The action of $K$ on $G$ is right multiplication. The group $G$ itself acts on the fiber bundle, this action clearly commutes with the projection map and the action of $K$. Let us define an equivalence relation '$\sim$' on $G\times \mathfrak{m}$ by  $$(g \times X)  \sim  (gk\times ad^{-1}(k)_*X) \;\;\;\;\ \forall k \in K.$$ Then the quotient space $(G\times \mathfrak{m})$/$\sim$ is nothing but the \textit{associated vector bundle} for the adjoint representation of $K$ on $\mathfrak{m}$. Define $\theta:G \times \mathfrak{m} \rightarrow T(G/K)$ by $\theta(g \times X) = L_{g*}X$. Now $\theta$ clearly factors through the quotient space $(G\times \mathfrak{m})$/$\sim$ so we have an isomorphism of vector bundles.  
$\widetilde{\theta}: (G\times \mathfrak{m})/ \sim \rightarrow T(G/K)$.

We now define a $G$-invariant connection on the principle bundle $G$. We set the horizontal space at the identity to be the space $\mathfrak{m}$ and $L_{g^*}\mathfrak{m}$ at $g$. Using the fact that $Ad_{|K}(\mathfrak{m})\subseteq \mathfrak{m}$, we can clearly see it is compatible with the right action of $K$. The connection on the principal bundle induces a linear connection on the \textit{associated vector bundle} $(G\times \mathfrak{m})/ \sim$ hence also on the tangent space of $G/K$. This linear connection is called the \textit{canonical connection} which we denote by $\nabla$. We now state some important properties of this connection. We refer the reader to ch 10 of \cite{K} for the proofs. 

\begin{theorem}
The parallel transport of $X\in T_{\gamma(0)}G/K$ with respect to the canonical connection along a curve $\gamma :[0,1] \rightarrow G/K $ is given by left translation of some element of $G$, which is independent of $X$. \label{cc2}
\end{theorem}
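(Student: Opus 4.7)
The plan is to unpack the definition of the canonical connection as the linear connection induced on the associated vector bundle $(G\times\mathfrak{m})/\!\sim\;\cong\;T(G/K)$ by the horizontal distribution $H_g=L_{g*}\mathfrak{m}$ on the principal $K$-bundle $G\to G/K$. The key observation I will use throughout is that on any associated bundle, a section is parallel along a curve if and only if, under the bundle isomorphism $\widetilde{\theta}$, it is represented by a \emph{constant} $\mathfrak{m}$-component along a horizontal lift of that curve.

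First, given $\gamma:[0,1]\to G/K$ with $\gamma(0)=g_0\bar{e}$, I would horizontally lift $\gamma$ to a curve $\widetilde{\gamma}:[0,1]\to G$ with $\widetilde{\gamma}(0)=g_0$; existence and uniqueness of the horizontal lift follow from the fact that $\mathfrak{m}$ is a linear complement of $\kt$ in $\mathfrak{g}$ and the distribution $H_g=L_{g*}\mathfrak{m}$ is smooth and $K$-equivariant. Writing $X\in T_{\gamma(0)}G/K$ uniquely as $X=L_{g_0*}X_0$ with $X_0\in\mathfrak{m}$, I define the candidate parallel section
\begin{equation*}
X(t):=L_{\widetilde{\gamma}(t)*}X_0\;\in\; T_{\gamma(t)}G/K.
\end{equation*}

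Next I would verify that $X(t)$ is indeed parallel with respect to $\nabla$. Under the isomorphism $\widetilde{\theta}:(G\times\mathfrak{m})/\!\sim\;\to T(G/K)$ we have $X(t)=\widetilde{\theta}\bigl([\widetilde{\gamma}(t),X_0]\bigr)$. Since $\widetilde{\gamma}$ is horizontal in $G$ and the $\mathfrak{m}$-coordinate $X_0$ is constant, the lifted section $t\mapsto(\widetilde{\gamma}(t),X_0)$ is horizontal in $G\times\mathfrak{m}$ with respect to the product of the connection and the trivial connection on the fiber; passing to the quotient, the corresponding section of $(G\times\mathfrak{m})/\!\sim$ is parallel with respect to the induced connection, which by definition is $\nabla$. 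This step is essentially the content of Proposition~1.4 and Theorem~11.1 of Kobayashi--Nomizu Ch.~II--III, which I would cite rather than re-prove.

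Finally, rearranging gives $X(t)=L_{\widetilde{\gamma}(t)g_0^{-1}*}X$, so the parallel transport operator from $T_{\gamma(0)}G/K$ to $T_{\gamma(t)}G/K$ is the differential of left translation by $g(t):=\widetilde{\gamma}(t)g_0^{-1}\in G$. The element $g(t)$ is determined entirely by the horizontal lift of $\gamma$, so in particular it does not depend on the vector $X$, proving the theorem. The only conceptual subtlety is the passage from the horizontal distribution on $G$ to parallel transport on the associated bundle; once that correspondence is invoked, everything else is bookkeeping about left translations and the $K$-equivariance of $\mathfrak{m}$.
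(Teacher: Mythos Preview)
Your argument is correct and is essentially the same approach as the paper's: the paper simply cites Corollary~2.5 of Chapter~10 of Kobayashi--Nomizu, which is precisely the statement that parallel transport in an associated bundle is given by left translation along a horizontal lift, i.e., exactly the mechanism you have written out in detail. Your proof is a faithful unpacking of that reference, so there is nothing to add or correct.
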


\begin{proof}
Follows from corollary 2.5 of ch 10 of \cite{K}
 \end{proof} 
By using this theorem and the definition of $G$ invariant tensor we arrive at this corollary. 
\begin{corollary}
Any $G$-invariant tensor is parallel with respect to the canonical connection.
\label{cc3}
\end{corollary}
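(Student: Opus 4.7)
The plan is to deduce the corollary as an essentially immediate consequence of Theorem \ref{cc2}. A tensor field $T$ on $G/K$ is parallel with respect to $\nabla$ precisely when, for every smooth curve $\gamma : [0,1] \to G/K$, the parallel transport operator $\tau_\gamma$ carries $T_{\gamma(0)}$ to $T_{\gamma(1)}$. So the task reduces to comparing the parallel transport given by Theorem \ref{cc2} with the action of $G$ that defines invariance of $T$.

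First I would invoke Theorem \ref{cc2} to obtain, for the given curve $\gamma$, an element $g \in G$ (depending on $\gamma$ but not on the tangent vector being transported) such that the parallel transport from $\gamma(0)$ to $\gamma(1)$ is the restriction of $L_{g*}$ to $T_{\gamma(0)}(G/K)$, with $L_g(\gamma(0)) = \gamma(1)$. Since the canonical connection is a linear connection on $T(G/K)$, it induces connections on every tensor bundle via the Leibniz rule, and the corresponding parallel transports are the multilinear extensions of the tangent-bundle transport. Hence parallel transport of a tensor at $\gamma(0)$ to $\gamma(1)$ is exactly the natural action of the diffeomorphism $L_g$ on tensors. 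By hypothesis $T$ is $G$-invariant, so $L_{g*} T_{\gamma(0)} = T_{L_g(\gamma(0))} = T_{\gamma(1)}$, which is precisely the statement that $T$ is carried to itself along $\gamma$. Since $\gamma$ is arbitrary, $\nabla T = 0$.

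The only point requiring care is the bookkeeping observation that parallel transport on tensor bundles induced by $\nabla$ agrees with the multilinear extension of the tangent-space parallel transport; this is a standard consequence of the Leibniz rule and presents no genuine obstacle. In particular, no additional obstacle arises, so the argument is short and conceptual rather than computational.
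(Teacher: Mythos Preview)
Your argument is correct and is exactly the approach the paper takes: the paper's proof consists of the single sentence ``By using this theorem and the definition of $G$ invariant tensor we arrive at this corollary,'' and your proposal simply unpacks this, invoking Theorem~\ref{cc2} to identify parallel transport with an $L_{g*}$ and then using $G$-invariance. There is nothing to add.
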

Let $\gamma_X(t)$ be the integral curve to the left-invariant vector field generated by $X$ in $G$.
\begin{theorem}
$\pi(\gamma_X(t))$ is a geodesic with respect to the canonical connection and all geodesics are of this form or a translate of it. \label{cc21}
\end{theorem}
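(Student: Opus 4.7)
The plan is to work at the level of the principal bundle $G \to G/K$: I would identify $\exp(tX)$ as a horizontal lift of $\pi(\exp(tX))$, deduce from this what parallel transport of tangent vectors along the projected curve must be, and then observe that the tangent field of the projected curve is automatically parallel. Throughout I assume $X\in\mathfrak{m}$; the case $X\in {\kt}$ projects to the constant geodesic at $\bar e$, and every initial velocity at $\bar e$ occurs for some $X\in\mathfrak{m}$ under the identification $T_{\bar e}G/K\cong\mathfrak{m}$.

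The first key step is horizontality. By the construction of the canonical connection recalled above, the horizontal subspace at $g\in G$ is $L_{g*}\mathfrak{m}$ (where $L_g$ now denotes left multiplication in $G$). Since $X\in\mathfrak{m}$, the velocity $\dot\gamma_X(t)=L_{\exp(tX)*}X$ lies in $L_{\exp(tX)*}\mathfrak{m}$, so $\gamma_X$ itself is horizontal. By the standard identification of the induced connection on the associated bundle $(G\times\mathfrak{m})/\sim$ with the canonical connection on $T(G/K)$, parallel transport of a vector $v\in T_{\bar e}G/K\cong\mathfrak{m}$ along $\tilde\gamma(t):=\pi(\exp(tX))$ is realised by the induced action of $\exp(tX)$ on $T(G/K)$; writing this induced action again as $L_{\exp(tX)*}$ (legitimate by $\pi$-equivariance), parallel transport of $v$ equals $L_{\exp(tX)*}v$.

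The second step is a one-line computation:
\[
\dot{\tilde\gamma}(t)=\pi_*\bigl(L_{\exp(tX)*}X\bigr)=L_{\exp(tX)*}\pi_*X=L_{\exp(tX)*}\dot{\tilde\gamma}(0),
\]
which shows that $\dot{\tilde\gamma}$ is exactly the parallel translate of its value at $t=0$. Hence $\nabla_{\dot{\tilde\gamma}}\dot{\tilde\gamma}=0$ and $\tilde\gamma$ is a geodesic. For the converse, let $\sigma$ be any geodesic. Using $G$-invariance of $\nabla$, I would translate so that $\sigma(0)=\bar e$ and set $Y=\dot\sigma(0)\in\mathfrak{m}$. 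By the first half, $\pi(\exp(tY))$ is a geodesic with the same initial data, so uniqueness of geodesics for the linear connection $\nabla$ forces $\sigma(t)=\pi(\exp(tY))$ on a common interval; since the right-hand side is defined for all $t$, one extends globally. Undoing the translation expresses $\sigma$ as a $G$-translate of $\pi\circ\exp(tY)$.

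The only real subtlety is matching the horizontal lift in the principal bundle with parallel transport in the associated bundle. This is standard principal-bundle theory and is already the content of Theorem \ref{cc2}, which asserts that parallel transport of the canonical connection is given by a single left translation. Once one identifies that translation as $\exp(tX)$ by horizontality, the geodesic equation collapses to the tautology above. Alternatively, one could invoke Corollary \ref{cc3}: the tangent field of $\tilde\gamma$ agrees at every point with the pushforward by $L_{\exp(tX)}$ of a single vector, and this pushforward realises parallel transport by Theorem \ref{cc2}.
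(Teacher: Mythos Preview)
Your argument is correct. Note, however, that the paper does not actually prove this theorem: the sentence preceding Theorem~\ref{cc2} reads ``We refer the reader to ch~10 of \cite{K} for the proofs,'' and Theorem~\ref{cc21} is simply stated without proof as one of the quoted properties of the canonical connection. Your horizontal-lift argument is precisely the standard proof one finds in Kobayashi--Nomizu (Chapter~X, Corollary~2.5 and the surrounding discussion): identify $\exp(tX)$ as the horizontal lift of its projection, read off parallel transport as left translation by $\exp(tX)$, and observe that the velocity field is then tautologically parallel. So rather than differing from the paper's approach, you have supplied the proof that the paper outsources to \cite{K}. The one notational wrinkle---using $L_g$ both for left multiplication in $G$ and for the induced action on $G/K$---you already flag, and it causes no harm since $\pi$ intertwines the two.
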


Let $T(\cdot,\cdot)$ be the torsion of the canonical connection. 
\begin{proposition}
$T(X,Y) = -[X,Y]_{\mathfrak{m}}$ where $T$ is the torsion connection. \label{cc4}
\end{proposition}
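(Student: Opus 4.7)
The plan is to exploit the $G$-invariance of the canonical connection to reduce the verification to a pointwise statement at $\bar{e}$, and then to evaluate the three terms of
$$T(X^*,Y^*) \;=\; \nabla_{X^*} Y^* - \nabla_{Y^*} X^* - [X^*,Y^*]$$
on the fundamental vector fields $X^*,Y^*$ coming from the $G$-action, using the explicit descriptions of geodesics and parallel transport already established in Theorems \ref{cc21} and \ref{cc2}.

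First, I would observe that since $\nabla$ is $G$-invariant, so is the torsion tensor $T$; by Corollary \ref{cc3} the tensor $T$ is therefore parallel. This reduces the proposition to checking the equality $T(X,Y)_{\bar e} = -[X,Y]_{\mathfrak{m}}$ for arbitrary $X,Y \in \mathfrak{m} \simeq T_{\bar e} G/K$.

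Second, I would extend $X,Y \in \mathfrak{m}$ to the fundamental vector fields $X^*, Y^*$ on $G/K$ generated by the left $G$-action. Since this action is a left action, $X \mapsto X^*$ is an anti-homomorphism of Lie algebras, so $[X^*,Y^*] = -[X,Y]^*$. Evaluating at $\bar{e}$, where $\mathfrak{k}$-components produce the zero vector (as $K$ stabilizes $\bar e$), this yields $[X^*,Y^*]_{\bar e} = -[X,Y]_{\mathfrak{m}}$.

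Third, I would compute $(\nabla_{X^*}Y^*)_{\bar e}$ via parallel transport. By Theorem \ref{cc21} the curve $\gamma(t)=\pi(\exp(tX))$ is a geodesic tangent to $X^*_{\bar e}$, and by Theorem \ref{cc2} parallel transport along $\gamma$ is realized by $L_{\exp(tX)_*}$. Combining this with the intertwining identity $Y^*_{g\bar e} = L_{g_*}(\mathrm{Ad}_{g^{-1}}Y)^*_{\bar e}$, the parallel-transport pullback of $Y^*_{\gamma(t)}$ to $\bar e$ is the curve $t\mapsto [\mathrm{Ad}_{\exp(-tX)}Y]_{\mathfrak{m}}$ in $\mathfrak{m}$; differentiating at $t=0$ gives $(\nabla_{X^*}Y^*)_{\bar e} = -[X,Y]_{\mathfrak{m}}$. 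Swapping $X$ and $Y$ yields $(\nabla_{Y^*}X^*)_{\bar e} = [X,Y]_{\mathfrak{m}}$, and substituting the three pieces into the torsion formula produces $T(X,Y)_{\bar e} = -[X,Y]_{\mathfrak{m}}$, as desired.

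The main delicacy is keeping all the sign conventions aligned: the anti-homomorphism property for fundamental vector fields of a left action, the identification $T_{\bar e}G/K \cong \mathfrak{m}$ via $\pi_*$, and the parameterization $\gamma_X(t) = \pi(\exp(tX))$ with $\dot\gamma_X(0) = X$. Once these are pinned down, the computation is entirely mechanical; the genuine content of the proposition lies entirely in Theorems \ref{cc2} and \ref{cc21}.
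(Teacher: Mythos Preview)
The paper does not actually supply a proof of this proposition; it is stated without argument, in the same spirit as the surrounding results attributed to Chapter~10 of Kobayashi--Nomizu~\cite{K}. Your computation is a correct direct verification and is exactly the standard one.

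One small refinement: you invoke Theorem~\ref{cc2} to conclude that parallel transport along $\gamma(t)=\pi(\exp(tX))$ is $L_{\exp(tX)*}$, but that theorem as stated only says the transport is \emph{some} left translation. The identification of the specific element as $\exp(tX)$ comes directly from the construction of the canonical connection: since $X\in\mathfrak{m}$, the curve $t\mapsto\exp(tX)$ is horizontal in $G$ and is therefore the horizontal lift of $\gamma$ through $e$, so the associated parallel transport on the associated bundle is $L_{\exp(tX)*}$. Once this is said, the rest of your argument goes through unchanged.
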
 

\subsection{Levi-Civita connection}

\begin{definition}
We define a connection $\widetilde{\nabla}$ by
$$\widetilde{\nabla}_XY = \nabla_XY + \frac{1}{2}[X,Y]_\mathfrak{m}$$ 
\end{definition}

\begin{theorem}
\begin{enumerate}

\item The connection $\widetilde{\nabla}$ is the Levi-Civita connection for the metric $<\cdot,\cdot>$.
\item The geodesics of this connection are the same as the geodesics of the canonical connection.
\label{geodesics}
\end{enumerate}
\end{theorem}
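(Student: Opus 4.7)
The plan is to verify directly that $\widetilde{\nabla}$ is torsion-free and metric-compatible, and then invoke the uniqueness of the Levi-Civita connection. Part~(2) is then an immediate byproduct: along any curve $\gamma$, $\widetilde{\nabla}_{\dot\gamma}\dot\gamma = \nabla_{\dot\gamma}\dot\gamma + \tfrac{1}{2}[\dot\gamma,\dot\gamma]_{\mathfrak m}$, but the bracket tensor is skew-symmetric in its arguments, so $[\dot\gamma,\dot\gamma]_{\mathfrak m}\equiv 0$ and the geodesic equations of the two connections coincide.

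For the torsion, I expand $\widetilde{T}(X,Y) = \widetilde{\nabla}_XY - \widetilde{\nabla}_YX - [X,Y]$ using the definition of $\widetilde{\nabla}$, obtaining $\widetilde{T}(X,Y) = T(X,Y) + \tfrac{1}{2}[X,Y]_{\mathfrak m} - \tfrac{1}{2}[Y,X]_{\mathfrak m} = T(X,Y) + [X,Y]_{\mathfrak m}$, where $T$ is the torsion of the canonical connection. By Proposition~\ref{cc4}, $T(X,Y) = -[X,Y]_{\mathfrak m}$, so the two contributions cancel and $\widetilde{T}\equiv 0$.

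For metric compatibility, I first use that $<\cdot,\cdot>$ is $G$-invariant, so by Corollary~\ref{cc3} it is parallel with respect to the canonical connection: $X<Y,Z> = <\nabla_XY,Z> + <Y,\nabla_XZ>$. Substituting the definition of $\widetilde{\nabla}$ produces the extra term $\tfrac{1}{2}\bigl(<[X,Y]_{\mathfrak m},Z> + <Y,[X,Z]_{\mathfrak m}>\bigr)$; applying the associativity identity~\eqref{associativity} to each summand and invoking skew-symmetry of the bracket tensor rewrites the two pieces as $<X,[Y,Z]_{\mathfrak m}>$ and $-<X,[Y,Z]_{\mathfrak m}>$, which cancel. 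Hence $\widetilde{\nabla}$ is torsion-free and metric-compatible, so it is the Levi-Civita connection by uniqueness. I do not foresee a genuine obstacle: every input — the torsion formula for $\nabla$, the $G$-invariance of $<\cdot,\cdot>$, and the associativity identity for the bracket tensor — has been established in the preceding subsections, and the verification above is purely mechanical.
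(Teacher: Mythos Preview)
Your proof is correct and follows exactly the same approach as the paper's own proof: the paper also deduces torsion-freeness from Proposition~\ref{cc4}, metric compatibility from Corollary~\ref{cc3} together with the associativity identity~\eqref{associativity}, and part~(2) from the vanishing of $[\dot\gamma,\dot\gamma]_{\mathfrak m}$ in the defining formula. You have simply written out the details that the paper leaves implicit.
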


\begin{proof}
(1) Since the torsion of the canonical connection is $-[X,Y]_{\mathfrak{m}}$, it is easy to show that this connection has zero torsion. The compatibility of the metric follows from compatibility of the metric with the canonical connection(Cor \ref{cc3}) and the associativity of the  bracket with the metric $<\cdot,\cdot>$ eq(\ref{associativity}).

(2) This follows directly from the formula of the Levi-Civita connection.
 \end{proof}

\begin{theorem}
The curvature of the metric is given by $$<R(X,Y)Y,X> = \frac{1}{4}<[X,Y]_\mathfrak{m},[X,Y]_\mathfrak{m}>+<[X,Y]_{\kt},[X,Y]_{\kt}> \label{curvature}$$
\end{theorem}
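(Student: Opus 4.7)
The plan is to route the computation through the canonical connection $\nabla$, whose curvature has a clean algebraic description, and then pass to the Levi-Civita connection $\widetilde\nabla$ via the defining identity $\widetilde\nabla_X Y = \nabla_X Y + \tfrac12 [X,Y]_{\mathfrak m}$ from Theorem \ref{geodesics}. Both sides of the asserted identity are $G$-invariant, so it suffices to work at $\bar e \in G/K$ with $X,Y \in \mathfrak m \cong T_{\bar e}(G/K)$. The first input is the curvature of $\nabla$ at $\bar e$: using that $G$-invariant tensors are $\nabla$-parallel (Corollary \ref{cc3}) and the torsion formula of Proposition \ref{cc4}, the standard computation (Chapter 10 of \cite{K}) gives
$$R^{\nabla}(X,Y)Z \;=\; -\,[\,[X,Y]_{\kt},\,Z\,]\qquad\text{for } X,Y,Z\in\mathfrak m.$$

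Next I would compare the two curvatures. Set $A(X,Y)=\tfrac12 [X,Y]_{\mathfrak m}$ so that $\widetilde\nabla=\nabla+A$. Because $A$ is built from the bracket tensor, it is $G$-invariant and therefore $\nabla$-parallel by Corollary \ref{cc3}, so the $\nabla A$ terms in the standard change-of-connection identity drop out. Using $T^{\nabla}(X,Y)=-[X,Y]_{\mathfrak m}$ (Proposition \ref{cc4}), expansion of $\widetilde\nabla\widetilde\nabla Z$ collapses to
$$\widetilde R(X,Y)Z \;=\; R^{\nabla}(X,Y)Z \;-\; A([X,Y]_{\mathfrak m},Z) \;+\; A(X,A(Y,Z))\;-\;A(Y,A(X,Z)).$$
Substituting $Z=Y$ kills $A(X,A(Y,Y))$ (because $[Y,Y]=0$), and the two remaining $\mathfrak m$-bracket contributions combine into
$$\widetilde R(X,Y)Y \;=\; -\,[\,[X,Y]_{\kt},Y\,] \;-\; \tfrac14\bigl[\,[X,Y]_{\mathfrak m},Y\,\bigr]_{\mathfrak m}.$$

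Finally I would pair with $X$ and invoke the invariance properties of the metric. Associativity of the bracket tensor (equation \ref{associativity}) gives $\langle [[X,Y]_{\mathfrak m},Y]_{\mathfrak m},X\rangle = -|[X,Y]_{\mathfrak m}|^2$, while $Ad$-invariance of $\langle\cdot,\cdot\rangle$ on $\mathfrak g$ combined with the orthogonality of $\kt$ and $\mathfrak m$ yields $\langle [[X,Y]_{\kt},Y],X\rangle = -|[X,Y]_{\kt}|^2$. Substituting produces exactly the asserted expression. The only real subtlety is the bookkeeping in the middle step: the coefficient $\tfrac14$ in front of the $\mathfrak m$-term survives only after the contributions $-A([X,Y]_{\mathfrak m},Y)$ and $-A(Y,A(X,Y))$ are combined with their opposite signs, and the torsion-induced sign has to be tracked carefully throughout.
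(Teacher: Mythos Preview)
Your derivation is correct. The change-of-connection computation you outline is sound: with $A(X,Y)=\tfrac12[X,Y]_{\mathfrak m}$ and $\nabla A=0$, the general identity reduces to
\[
\widetilde R(X,Y)Z = R^{\nabla}(X,Y)Z + A(T^{\nabla}(X,Y),Z) + A(X,A(Y,Z)) - A(Y,A(X,Z)),
\]
and inserting $T^{\nabla}=-[\,\cdot,\cdot\,]_{\mathfrak m}$, $R^{\nabla}(X,Y)Z=-[[X,Y]_{\kt},Z]$, and $Z=Y$ gives exactly the two terms you wrote, with the $-\tfrac12+\tfrac14=-\tfrac14$ cancellation in the $\mathfrak m$-piece. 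The final pairing step using associativity (\ref{associativity}) and $\kt\perp\mathfrak m$ is also right.

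As for comparison with the paper: the paper does not actually supply a proof of this theorem. The formula is the classical expression for the sectional curvature of the normal metric on a reductive homogeneous space (found e.g.\ in \cite{K}, Chapter X), and the author simply quotes it. Your argument is a clean way to recover it from the ingredients already assembled in the paper (Corollary~\ref{cc3}, Proposition~\ref{cc4}, and the definition of $\widetilde\nabla$), so it fits well as a self-contained justification.
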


\begin{proposition}\label{integrability}
Let $J$ be a  $G$-invariant integrable complex structure on $G/K$. Then 
\begin{enumerate}[a)]    
\item $\nabla J = 0$, where $\nabla$ is the canonical connection.
\item $[X,Y]_{\mathfrak{m}}+J[JX,Y]_{\mathfrak{m}}+J[X,JY]_{\mathfrak{m}}-[JX,JY]_{\mathfrak{m}}=0$
\end{enumerate}
\end{proposition}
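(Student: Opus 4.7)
For part (a), the plan is immediate: the hypothesis that $J$ is $G$-invariant means $J$ is a $G$-invariant $(1,1)$-tensor on $G/K$, and Corollary~\ref{cc3} asserts that every such tensor is parallel with respect to the canonical connection. Hence $\nabla J = 0$, and no further computation is needed.

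For part (b), the idea is to translate the vanishing of the Nijenhuis tensor of $J$ (which holds by the integrability hypothesis) into an identity in the bracket tensor. The key tool is the standard formula relating the Lie bracket of vector fields to a connection and its torsion: for any smooth vector fields $X,Y$,
\[
[X,Y] \;=\; \nabla_X Y - \nabla_Y X - T(X,Y).
\]
By Proposition~\ref{cc4}, $T(X,Y) = -[X,Y]_{\mathfrak{m}}$, so $[X,Y] = \nabla_X Y - \nabla_Y X + [X,Y]_{\mathfrak{m}}$.

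The plan is now to substitute this expression into the Nijenhuis tensor
\[
N_J(X,Y) \;=\; -\bigl([X,Y] + J[JX,Y] + J[X,JY] - [JX,JY]\bigr)
\]
and to move every $J$ past the connection using part~(a), i.e.\ $\nabla_X(JY) = J\nabla_X Y$. A direct expansion produces two kinds of terms: connection terms involving $\nabla_X Y$, $\nabla_Y X$, $J\nabla_{JX}Y$, $J\nabla_{JY}X$, and bracket-tensor terms of the form $[\cdot,\cdot]_{\mathfrak{m}}$, $J[\cdot,\cdot]_{\mathfrak{m}}$. Using $J^2 = -\mathrm{Id}$ and $\nabla J = 0$, each connection term appears exactly twice with opposite signs and cancels, leaving precisely
\[
N_J(X,Y) \;=\; -\bigl([X,Y]_{\mathfrak{m}} + J[JX,Y]_{\mathfrak{m}} + J[X,JY]_{\mathfrak{m}} - [JX,JY]_{\mathfrak{m}}\bigr).
\]
Since $J$ is integrable, $N_J \equiv 0$, and rearranging yields the identity claimed in (b).

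The computation is purely algebraic and essentially automatic once part (a) is in hand; the only care needed is in bookkeeping the eight terms of the cancellation and in using $\nabla J = 0$ to commute $J$ through each covariant derivative. I do not foresee a genuine obstacle: the argument works pointwise and uses no special properties of $J$ beyond $J^2=-\mathrm{Id}$, $G$-invariance (via part (a)), and integrability (to kill $N_J$).
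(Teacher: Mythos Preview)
Your argument is correct. Part~(a) is exactly Corollary~\ref{cc3} applied to the $G$-invariant tensor $J$, and part~(b) follows by expanding the Nijenhuis tensor through the torsion formula $[X,Y]^{\mathrm{vf}}=\nabla_XY-\nabla_YX+[X,Y]_{\mathfrak m}$ (Proposition~\ref{cc4}) and using $\nabla J=0$ to commute $J$ through covariant derivatives; the eight connection terms cancel in pairs and only the bracket-tensor terms survive, exactly as you describe.

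The paper itself states Proposition~\ref{integrability} without proof, treating both assertions as standard facts about invariant complex structures on reductive homogeneous spaces. Your write-up supplies precisely the missing standard computation, so there is nothing to compare against; your approach is the natural one and goes through without issue.
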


\subsection{Complex $G^c/P$} \label{G/P}

Let $G^c$ be a complex semi-simple Lie group and let $\mathfrak{g}_{\mathbb{C}}$ be the corresponding Lie algebra. Let $\mathfrak{h}$ be a Cartan subalgebra. Let $\Delta \subset \mathfrak{h}^*$ be the set of roots. Let $V_{\alpha}^{\mathbb{C}} = \{E \in \mathfrak{g}_{\mathbb{C}}| [h,E] = \alpha(h)E \}$ denote the root space corresponding to $\alpha$. We also have the following decomposition $\mathfrak{g}_{\mathbb{C}}= \mathfrak{h} \oplus \bigoplus_{\alpha \in \Delta}V_{\alpha}^{\mathbb{C}}$.

For a semi-simple Lie algebra $\mathfrak{g}_{\mathbb{C}}$, the roots and root spaces satisfy the following properties. If $\alpha$ is a root then so is $-\alpha$. Each $V_{\alpha}^{\mathbb{C}}$ is one-dimensional. The root space satisfies an important property $[V_{\alpha_1}^{\mathbb{C}},V_{\alpha_2}^{\mathbb{C}}] \subset V_{\alpha_1+\alpha_2}^{\mathbb{C}}$, the bracket is zero if $\alpha_1+\alpha_2$ is not a root and $[V_{\alpha_{1}}^{\mathbb{C}},V_{\alpha_{2}}^{\mathbb{C}}] \subset \mathfrak{h}$ if $\alpha_1+\alpha_2 = 0$.

We can choose a base $\Sigma \subset \Delta$ such that any element of $\Delta$ can be uniquely written as an integer linear combination of elements of $\Sigma$, such that all the co-efficients are either positive or negative. A choice of such a set of roots $\Sigma$, are called simple roots. Let $\Delta^+$, $\Delta^-$ be the set of elements of $\Delta$ that can be written as a positive / negative linear combinations respectively. $\Delta^+$ and $\Delta^-$ will be referred to as positive and negative roots.

We have an inner product on the Lie algebra $\mathfrak{g}_{\mathbb{C}}$ namely the Killing form $\kappa(\cdot,\cdot)$. $\kappa$ is associative in the sense that $\kappa([X,Y],Z) = \kappa([X,[Y,Z])$ for all $X,Y,Z \in \mathfrak{g}$. It also satisfies the following properties $\kappa(V_{\alpha_{1}}^{\mathbb{C}},V_{\alpha_{2}}^{\mathbb{C}}) = 0 $ iff $\alpha_1+\alpha_2 \neq 0$ and $\kappa(V_{\alpha_{i}} ,\mathfrak{h} ) = 0$ for $i = 1,2$, $\alpha_i \in \Delta$. This inner product restricted to $\mathfrak{h}$ is non-degenerate giving us an identification of $\mathfrak{h}$ and $\mathfrak{h}^*$. 

For a given root $\alpha$ we will denote its dual by $t_{\alpha}$. We also denote by $\mathfrak{h}_\mathbb{R}$ the $\mathbb{R}$ linear span of $t_\alpha$ for $\alpha \in \Delta^+$. $\kappa(\cdot,\cdot)$ is positive definite on $\mathfrak{h}_\mathbb{R}$. Let $(\cdot,\cdot)$ be the dual of $\kappa$. Define the structure constants $c_{\alpha,\beta}$ by $[E_\alpha,E_{\beta}]= c_{\alpha,\beta}E_{\alpha+\beta}$. We state a Proposition from [25,chp3 sec 5]

\begin{proposition}\label{Chevally}
We can choose $E_{\alpha}\in V_{\alpha}^{\mathbb{C}}$ such that 
\begin{enumerate}[(a)]
\item $c_{\alpha,\beta} = -c_{\beta,\alpha}$
\item $c_{\alpha,\beta} = -c_{-\alpha,-\beta}$
\item $[E_\alpha,E_{-\alpha}] = t_\alpha$ 
\item $c_{\alpha,\beta} = c_{\beta,-\delta} = c_{-\delta,\alpha}$ whenever $\alpha+\beta = \delta$
\end{enumerate}
 \end{proposition}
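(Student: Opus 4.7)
The plan is to establish (a), (c), (d), and (b) in that order, exploiting the one-dimensionality of each root space, the non-degeneracy of the Killing form on $V_\alpha^{\mathbb{C}}\times V_{-\alpha}^{\mathbb{C}}$, and the Jacobi identity. Part (a) is immediate for any choice of root vectors: since $[E_\alpha,E_\beta]=-[E_\beta,E_\alpha]$ and $V_{\alpha+\beta}^{\mathbb{C}}$ is one-dimensional, comparing coefficients in $c_{\alpha,\beta}E_{\alpha+\beta}=-c_{\beta,\alpha}E_{\alpha+\beta}$ forces $c_{\alpha,\beta}=-c_{\beta,\alpha}$.

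For (c), I would combine associativity of $\kappa$ with $\kappa(V_\alpha^{\mathbb{C}},\mathfrak{h})=0$ and $\kappa(V_\alpha^{\mathbb{C}},V_\beta^{\mathbb{C}})=0$ for $\alpha+\beta\neq 0$. For $X\in V_\alpha^{\mathbb{C}}$, $Y\in V_{-\alpha}^{\mathbb{C}}$, $h\in\mathfrak{h}$, associativity gives $\kappa([X,Y],h)=\kappa(X,[Y,h])=\alpha(h)\kappa(X,Y)$, so $[X,Y]=\kappa(X,Y)\,t_\alpha$. Non-degeneracy of $\kappa$ forces it to be a perfect pairing on $V_\alpha^{\mathbb{C}}\times V_{-\alpha}^{\mathbb{C}}$; after choosing any nonzero $E_\alpha$, rescaling $E_{-\alpha}$ so that $\kappa(E_\alpha,E_{-\alpha})=1$ produces (c).

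Granted (c), I would obtain (d) by applying the Jacobi identity to the triple $(E_\alpha,E_\beta,E_{-\delta})$ with $\delta=\alpha+\beta$ a root. Expanding $[E_\alpha,[E_\beta,E_{-\delta}]]+[E_\beta,[E_{-\delta},E_\alpha]]+[E_{-\delta},[E_\alpha,E_\beta]]=0$ and substituting bracket values from (c) and from the definition of the structure constants produces $c_{\beta,-\delta}\,t_\alpha+c_{-\delta,\alpha}\,t_\beta-c_{\alpha,\beta}\,t_\delta=0$. Since the duality map $\alpha\mapsto t_\alpha$ is linear, $t_\delta=t_\alpha+t_\beta$, and since $\alpha,\beta$ are linearly independent so are $t_\alpha,t_\beta$; hence the coefficients of $t_\alpha$ and $t_\beta$ vanish separately, giving $c_{\beta,-\delta}=c_{-\delta,\alpha}=c_{\alpha,\beta}$.

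Part (b) is where I expect the real work. The normalization in (c) pins down $E_\alpha$ only up to a scalar for each pair $\{\alpha,-\alpha\}$ independently, whereas (b) is a cross-root compatibility between the signs for $\alpha$ and $-\alpha$. The plan is to invoke a Chevalley involution: an automorphism $\theta$ of $\mathfrak{g}_{\mathbb{C}}$ with $\theta|_{\mathfrak{h}}=-\mathrm{id}$ that interchanges $V_\alpha^{\mathbb{C}}\leftrightarrow V_{-\alpha}^{\mathbb{C}}$, constructed by defining $\theta$ on a set of Chevalley generators and extending via the Serre relations. Once $\theta$ exists, the one-dimensionality of the root spaces permits a simultaneous rescaling of the $E_\alpha$ so that $\theta(E_\alpha)=-E_{-\alpha}$ for every root while preserving (c). Applying $\theta$ to $[E_\alpha,E_\beta]=c_{\alpha,\beta}E_{\alpha+\beta}$ then yields $[E_{-\alpha},E_{-\beta}]=-c_{\alpha,\beta}E_{-\alpha-\beta}$, i.e.\ (b); since Jacobi is a formal identity, (d) is unaffected by this refinement, and (a) persists trivially. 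The main obstacle is arguing that this simultaneous rescaling can be carried out consistently across the whole root system, which is exactly the content of the existence of the Chevalley involution.
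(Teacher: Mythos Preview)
Your argument is correct and is essentially the standard textbook derivation of a Chevalley basis. However, the paper itself does not prove this proposition: it is stated with an explicit citation to Helgason \cite{Hel}, Chapter~3, Section~5, and no proof is given in the paper. So there is nothing to compare against beyond noting that your outline matches the classical approach found in that reference (and in Humphreys \cite{H}): (a) from antisymmetry, (c) from associativity of $\kappa$ plus normalization $\kappa(E_\alpha,E_{-\alpha})=1$, (d) from Jacobi on $(E_\alpha,E_\beta,E_{-\delta})$, and (b) from the existence of a Chevalley involution $\theta$ with $\theta(E_\alpha)=-E_{-\alpha}$. Your identification of (b) as the only nontrivial step, requiring the consistency of the involution across the whole root system, is exactly right.
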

 

Let $\Sigma_{{\kt}} \subset \Sigma$ and let $\Delta_{{\kt}}$ be the set of all roots which can be written down as sums of roots of $\Sigma_{{\kt}}$. Let $\Delta_{{\kt}}^+ = \Delta_{{\kt}} \cap \Delta^+$ and $\Delta_{{\kt}}^- = \Delta_{{\kt}} \cap \Delta^- $. Let $\mathfrak{p}= \mathfrak{h}\oplus\bigoplus_{\alpha \in \Delta_{{\kt}}^- \cup \Delta^+}V_{\alpha}^{\mathbb{C}}$. Let $P$ be the Lie subgroup corresponding to the subalgebra $\mathfrak{p}$. $P$ is a parabolic subgroup and every parabolic subgroup is of this form, for an appropriate choice of $\mathfrak{h}$, $\Delta$ and $\Sigma$. \cite{W}

The homogeneous space $V = G^c/P$ is a compact complex homogeneous space and can be written as a quotient $G/K$ where $G$ is a compact subgroup of $G^c$ and $K = G^c \cap P$. We now describe its Lie algebra $\mathfrak{g}$.

Let $X_\alpha = E_\alpha-E_{-\alpha}$ and let $Y_\alpha = iE_{\alpha}+ iE_{\alpha}$. Then $\mathfrak{g}$ decomposes as  $$\mathfrak{g}= i\mathfrak{h}\oplus\bigoplus_{\alpha \in \Delta^+} V_{\alpha}$$where $V_{\alpha}= \text{ span}_{\mathbb{R}} \{X_{\alpha},Y_{\alpha}\}$. $V_{\alpha}$ will be referred to as the real root space associated to $\alpha$.

Since $K = G^c \cap P$, the Lie algebra of $K$ is ${\kt} = i\mathfrak{h}\oplus\bigoplus_{\alpha \in \Delta_{{\kt}}^+} V_{\alpha}$, the restriction of $\kappa$ to $\mathfrak{g}$ is negative definite, hence the corresponding group $G$ is compact. 
Denote by $\< \cdot,\cdot{\rangle}$ the left-invariant metric on $G$ such that the restriction to $\mathfrak{g}$ is $-\kappa_{|\mathfrak{g}}$. Since $\kappa$ is associative, that implies that $\< \cdot,\cdot{\rangle}$ is a bi-invariant metric.  


Let $\Delta_{\mathfrak{m}}$ be the complement of $\Delta_{{\kt}}$ in $\Delta$. Let $\mathfrak{m} = \bigoplus_{\alpha \in \Delta_{\mathfrak{m}}^+} V_{\alpha}$. Using the properties of the Killing form its clear that  $\mathfrak{g} = {\kt} \oplus \mathfrak{m}$ is an orthogonal decomposition. This makes $G/K$ a reductive homogeneous space. We also identify $\mathfrak{m}$ with $T_{\bar{e}}G/K$.

Now we define a complex structure on this tangent space $J:T_{\bar{e}}G/K \rightarrow T_{\bar{e}}G/K $ by (note that $\alpha \in \Delta_{\mathfrak{m}}^+$)
$$J(X_{\alpha}) = Y_{\alpha}$$
$$J(Y_{\alpha})=-X_{\alpha}$$

We note that the complex structure we just defined, implies that $JE_{\alpha} = iE_{\alpha}$ and $JE_{-\alpha} = -iE_{-\alpha}$. This can extended to the whole $G/K$ to give us an invariant, integrable and hermitian complex structure.

\subsection{The map I}

In this section we define a linear operator on a subspace of $\mathfrak{g}$, which along with $J$ gives us quaternionic structure on the subspace. This structure plays an important role in the work below. 

We define a bilinear form on $\mathfrak{m}$ by $R_{Y}X = [Y,X]_{\mathfrak{m}}+J[JY,X]_{\mathfrak{m}}$
 
\begin{lemma} 
\label{mel}
\begin{enumerate}[(a)]
\item If $X,Y \in \mathfrak{m}$ then   
$[X^{1,0},Y^{1,0}]_\mathfrak{m}\in \mathfrak{m}^{1,0}$ 
\item
$R_{Y}X \neq 0 $ iff $[X^{1,0},Y^{0,1}]_\mathfrak{m} \notin \mathfrak{m}^{0,1}$
\item If $X,Y\in \mathfrak{m}$ then $[X^{1,0},Y^{1,0}]_{{\kt}}=0$. 
\item If $[Y,X]_{{\kt}}=0$ and $[Y,JX]_{{\kt}}=0$ iff $[X^{1,0},Y^{0,1}]_{{{\kt}}} = 0$
\end{enumerate}
\end{lemma}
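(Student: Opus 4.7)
The plan is to work throughout in the root-space decomposition, using the explicit descriptions $\mathfrak{m}^{1,0}_{\mathbb{C}} = \bigoplus_{\alpha \in \Delta_{\mathfrak{m}}^{+}} \mathbb{C}E_{\alpha}$, $\mathfrak{m}^{0,1}_{\mathbb{C}} = \bigoplus_{\alpha \in \Delta_{\mathfrak{m}}^{+}} \mathbb{C}E_{-\alpha}$, together with $\kt_{\mathbb{C}} = \mathfrak{h} \oplus \bigoplus_{\gamma \in \Delta_{\kt}} V_{\gamma}^{\mathbb{C}}$. Parts (a) and (c) then reduce to the same combinatorial fact about $\Delta_{\mathfrak{m}}^{+}$: if $\alpha,\beta \in \Delta_{\mathfrak{m}}^{+}$ and $\alpha+\beta \in \Delta$, then $\alpha+\beta \in \Delta_{\mathfrak{m}}^{+}$. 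Indeed, each of $\alpha,\beta$ has a strictly positive coefficient on some simple root outside $\Sigma_{\kt}$ (this is exactly what prevents it from lying in $\Delta_{\kt}$), so their sum inherits the same property. Hence $[E_{\alpha},E_{\beta}]$ is either zero or a scalar multiple of $E_{\alpha+\beta} \in \mathfrak{m}^{1,0}_{\mathbb{C}}$, delivering both (a) and (c) at once after extending by bilinearity.

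For (b), I would substitute $X=X^{1,0}+X^{0,1}$, $Y=Y^{1,0}+Y^{0,1}$ into $R_{Y}X = [Y,X]_{\mathfrak{m}} + J[JY,X]_{\mathfrak{m}}$, using that $J$ acts as $i$ on $\mathfrak{m}^{1,0}$ and as $-i$ on $\mathfrak{m}^{0,1}$. By part (a), $[Y^{1,0},X^{1,0}]_{\mathfrak{m}} \in \mathfrak{m}^{1,0}$ and $[Y^{0,1},X^{0,1}]_{\mathfrak{m}} \in \mathfrak{m}^{0,1}$, so the contributions of these two ``pure'' terms cancel once the $J$-twist is applied and the two summands are combined. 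What remains comes from the mixed brackets; writing $[Y^{1,0},X^{0,1}]_{\mathfrak{m}} = A+B$ with $A \in \mathfrak{m}^{1,0}$ and $B \in \mathfrak{m}^{0,1}$, the reality of $X$ and $Y$ gives $[Y^{0,1},X^{1,0}]_{\mathfrak{m}} = \overline{A}+\overline{B}$, and a direct expansion yields $R_{Y}X = 2(B+\overline{B})$. Since $B$ and $\overline{B}$ live in complementary subspaces, $R_{Y}X = 0$ iff $B=0$. On the other hand, $[X^{1,0},Y^{0,1}]_{\mathfrak{m}} = -(\overline{A}+\overline{B})$, whose $\mathfrak{m}^{1,0}$-component is $-\overline{B}$, so this bracket fails to lie in $\mathfrak{m}^{0,1}$ precisely when $B \neq 0$, proving (b).

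Part (d) follows quickly from (c): the two real conditions $[Y,X]_{\kt}=0$ and $[Y,JX]_{\kt}=0$ combine, by separating real and imaginary parts, into the single complex equation $[Y,X-iJX]_{\kt}=0$, or equivalently $[Y,X^{1,0}]_{\kt}=0$. Splitting $Y = Y^{1,0}+Y^{0,1}$ and invoking (c) to eliminate $[Y^{1,0},X^{1,0}]_{\kt}$ leaves $[Y^{0,1},X^{1,0}]_{\kt}=0$, which is $[X^{1,0},Y^{0,1}]_{\kt}=0$ up to sign. The step I expect to demand the most care is the sign and conjugation bookkeeping in the computation of $R_{Y}X$ in (b); everything else is a transparent application of the root-space formalism once the $(1,0)/(0,1)$ splittings are pinned down.
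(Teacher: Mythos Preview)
Your proof is correct. The main difference from the paper is in part (a): you appeal directly to the root-space combinatorics (if $\alpha,\beta\in\Delta_{\mathfrak m}^{+}$ then $\alpha+\beta\notin\Delta_{\kt}$ and is positive, hence $E_{\alpha+\beta}\in\mathfrak m^{1,0}$), whereas the paper derives (a) abstractly from the integrability condition $[X,Y]_{\mathfrak m}+J[JX,Y]_{\mathfrak m}+J[X,JY]_{\mathfrak m}-[JX,JY]_{\mathfrak m}=0$ by substituting the $(1,0)$ parts. The paper's route has the advantage of applying to any invariant integrable $J$, independent of the particular root description; your route is more concrete and simultaneously yields (c). For (b) your computation $R_YX=2(B+\bar B)$ is equivalent to the paper's formulation $R_YX=-(Z+\bar Z)$ with $Z=[X^{1,0},Y^{0,1}]_{\mathfrak m}-iJ[X^{1,0},Y^{0,1}]_{\mathfrak m}$, just packaged differently. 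For (d) the paper instead proves the norm identity $|[X,Y]_{\kt}|^2+|[JX,Y]_{\kt}|^2=4\langle[X^{1,0},Y^{0,1}]_{\kt},[X^{0,1},Y^{1,0}]_{\kt}\rangle$; your argument---combining the two real equations into one complex one and killing the pure term via (c)---is a bit more direct and transparently gives the ``iff''.
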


\begin{proof}
\begin{enumerate}[(a)]
\item The result follows by decomposing $X,Y$ into their $\{1,0\}$ and $\{0,1\}$ components in the integrability condition (Proposition \ref{integrability})
\item After a brief calculation we find that $R_{Y}X=-(Z+\overline{Z})$ where $Z = [X^{1,0},Y^{0,1}]_\mathfrak{m}-iJ[X^{1,0},Y^{0,1}]_\mathfrak{m}$ But $Z \in \mathfrak{m}^{1,0}$ so $Z+\overline{Z}=0$  if and only if $Z=0$ if and only if $[X^{1,0},Y^{0,1}]_\mathfrak{m}\in \mathfrak{m}^{0,1}$.
\item It suffices to show that if $\alpha, \beta \in \mathfrak{m}$ the $\alpha + \beta \notin \kt$. This follows trivially from the construction of $\mathfrak{m}$ and ${\kt}$.
\item Using (c) we arrive at $$|[X,Y]_{{\kt}}|^2+|[JX,Y]_{{\kt}}|^2
=4\< [X^{1,0},Y^{0,1}]_{{{\kt}}},[X^{0,1},Y^{1,0}]_{{{\kt}}}{\rangle}$$ The result now follows from this calculation.
\end{enumerate}
 \end{proof}

\begin{lemma} 
\label{1mel}
\begin{enumerate}[(a)]
\item If $[X^{1,0},Y^{0,1}]_{\mathfrak{m}} \in \mathfrak{m}^{0,1}$ then $J[Y,X]_{\mathfrak{m}} = [JY,X]_{\mathfrak{m}}$ 
\item If $[X^{1,0},Y^{0,1}]_{\mathfrak{m}} \in \mathfrak{m}^{0,1}$ then
$J[Y,X]_{\mathfrak{m}} = -[Y,JX]_{\mathfrak{m}} +i[Y^{1,0},X^{1,0}]_{\mathfrak{m}}+i[Y^{0,1},X^{0,1}]_{\mathfrak{m}}$ 
\end{enumerate}
\end{lemma}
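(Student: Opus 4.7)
The plan is to handle the two parts separately, using Lemma \ref{mel} together with bookkeeping on types.

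For part (a), I would observe that the hypothesis $[X^{1,0},Y^{0,1}]_{\mathfrak{m}}\in\mathfrak{m}^{0,1}$ is exactly the vanishing condition given by Lemma \ref{mel}(b). That is, it is equivalent to $R_Y X=0$. Unpacking the definition $R_Y X=[Y,X]_{\mathfrak{m}}+J[JY,X]_{\mathfrak{m}}$, this reads $[Y,X]_{\mathfrak{m}}=-J[JY,X]_{\mathfrak{m}}$. Applying $J$ to both sides and using $J^2=-\mathrm{id}$ (which holds pointwise on $\mathfrak{m}$) immediately produces $J[Y,X]_{\mathfrak{m}}=[JY,X]_{\mathfrak{m}}$. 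This is a one-line consequence of what has already been proved.

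For part (b), the strategy is direct type-decomposition. Since $X,Y\in\mathfrak{m}$ are real we have $X^{0,1}=\overline{X^{1,0}}$ and similarly for $Y$, so complex conjugation of the hypothesis gives the companion statement $[X^{0,1},Y^{1,0}]_{\mathfrak{m}}\in\mathfrak{m}^{1,0}$. Combined with Lemma \ref{mel}(a), which says $[A^{1,0},B^{1,0}]_{\mathfrak{m}}\in\mathfrak{m}^{1,0}$ (and its conjugate $[A^{0,1},B^{0,1}]_{\mathfrak{m}}\in\mathfrak{m}^{0,1}$), this determines the $\{1,0\}/\{0,1\}$ type of every one of the four brackets $[Y^{\varepsilon},X^{\delta}]_{\mathfrak{m}}$, $\varepsilon,\delta\in\{(1,0),(0,1)\}$. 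Since $J$ acts as multiplication by $i$ on $\mathfrak{m}^{1,0}$ and by $-i$ on $\mathfrak{m}^{0,1}$, applying $J$ to the expansion of $[Y,X]_{\mathfrak{m}}$ reduces to scalar multiplication on each of the four pieces.

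I would then compute the right-hand side by the same bookkeeping: write $JX=iX^{1,0}-iX^{0,1}$, expand $[Y,JX]_{\mathfrak{m}}$ using bilinearity of the bracket tensor (no type hypothesis is needed here, only $\mathbb{C}$-linearity), and add the correction terms $i[Y^{1,0},X^{1,0}]_{\mathfrak{m}}+i[Y^{0,1},X^{0,1}]_{\mathfrak{m}}$. Matching the result term by term against the expansion of $J[Y,X]_{\mathfrak{m}}$ obtained in the previous paragraph gives the identity. The main obstacle here is nothing conceptual but simply the sign bookkeeping across eight terms; having the four type-assignments in hand makes the verification mechanical, and no further structural input beyond Lemma \ref{mel} and the reality of $X,Y$ is required.
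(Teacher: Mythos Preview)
Your proposal is correct and matches the paper's proof essentially line for line: part (a) is exactly the paper's argument (the hypothesis is $R_YX=0$ by Lemma~\ref{mel}(b), then unwind the definition), and for part (b) the paper likewise computes $J[Y,X]_{\mathfrak m}+[Y,JX]_{\mathfrak m}$ by writing $X=X^{1,0}+X^{0,1}$, $JX=iX^{1,0}-iX^{0,1}$ and using the type assignments from Lemma~\ref{mel}(a) together with the hypothesis and its conjugate. The only difference is cosmetic---the paper leaves the term-matching implicit where you spell out the eight-term bookkeeping.
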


\begin{proof}
\begin{enumerate}[(a)]
\item The hypothesis implies $R_{Y}X =0$ from Lemma \ref{mel}. The result now follows directly from the definition of $R_{Y}X $
\end{enumerate}

\vspace{-.5cm}

\begin{align*}
\text{(b)} J[Y,X]_{\mathfrak{m}}+[Y,JX]_{\mathfrak{m}}&= J[Y,X^{1,0}+X^{0,1}]_{\mathfrak{m}}+i[Y,X^{1,0}-X^{0,1}]_{\mathfrak{m}}\\ 
&=i[Y^{1,0},X^{1,0}]_{\mathfrak{m}}+i[Y^{0,1},X^{0,1}]_{\mathfrak{m}}
\end{align*}

The last line follows since $[X^{1,0},Y^{0,1}]_{\mathfrak{m}} \in \mathfrak{m}^{0,1}$  \end{proof}

Let $[\cdot,\cdot]_{\alpha,\beta}$ denote the projection of the bracket on to the subspace $V_{\alpha} \oplus V_{\beta}$. We have following lemma 
\begin{lemma} \label{2mel}
Let $X \in V_{\alpha} \oplus V_{\beta}$ and $\delta = \alpha + \beta$. For $\widetilde{X}_{\delta} = aX_{\delta} + bJX_{\delta}$ $$[\widetilde{X}_{\delta},[\widetilde{X}_{\delta},X]_{\alpha,\beta}]_{\alpha,\beta} = -(a^2+b^2)c_{\alpha,\beta}^2X$$
\end{lemma}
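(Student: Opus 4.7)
The plan is to reduce the identity to a direct computation in the Chevalley basis furnished by Proposition \ref{Chevally}. First I rewrite $\widetilde{X}_\delta$ in the complex basis: using $X_\delta = E_\delta - E_{-\delta}$ and $JX_\delta = Y_\delta = i(E_\delta+E_{-\delta})$ from the description of the complex structure in Section \ref{G/P}, one gets
$$\widetilde{X}_\delta = \lambda E_\delta + \mu E_{-\delta}, \qquad \lambda = a+ib, \quad \mu = -a+ib,$$
with $\lambda\mu = -(a^2+b^2)$. A general $X \in V_\alpha \oplus V_\beta$ can be expanded over the complexification as $X = pE_\alpha + qE_{-\alpha} + rE_\beta + sE_{-\beta}$, and by $\mathbb{C}$-linearity of the double bracket in $X$ it suffices to verify the identity on each of the four basis vectors.

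Next I compute $[\widetilde{X}_\delta, E_{\pm\alpha}]$ and $[\widetilde{X}_\delta, E_{\pm\beta}]$ and project onto $V_\alpha \oplus V_\beta$. Since $[V_{\mu}^{\mathbb{C}}, V_{\nu}^{\mathbb{C}}] \subset V_{\mu+\nu}^{\mathbb{C}}$, the terms with exponents $\pm\delta\pm\alpha = \pm(2\alpha+\beta)$ (and analogously with $\beta$) lie in root spaces disjoint from $V_\alpha \oplus V_\beta$ and are killed by the projection. Only terms with exponents $\pm\delta\mp\alpha = \mp\beta$ or $\pm\delta\mp\beta = \mp\alpha$ survive, giving for instance
$$[\widetilde{X}_\delta, E_\alpha]_{\alpha,\beta} = \mu\, c_{-\delta,\alpha}\, E_{-\beta}, \qquad [\widetilde{X}_\delta, E_{-\beta}]_{\alpha,\beta} = \lambda\, c_{\delta,-\beta}\, E_\alpha.$$
Applying the Chevalley relations (a)--(d) of Proposition \ref{Chevally} to the triples $(\alpha,\beta,-\delta)$ and $(-\alpha,-\beta,\delta)$, all four relevant structure constants reduce to $\pm c_{\alpha,\beta}$; explicitly $c_{-\delta,\alpha} = c_{\delta,-\beta} = c_{\alpha,\beta}$ and $c_{\delta,-\alpha} = c_{-\delta,\beta} = -c_{\alpha,\beta}$.

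The key structural observation is that $T := [\widetilde{X}_\delta,\,\cdot\,]_{\alpha,\beta}$ acts on the complex basis as a pair of swaps, $E_\alpha \leftrightarrow E_{-\beta}$ and $E_{-\alpha} \leftrightarrow E_\beta$, with each arrow carrying a scalar $\pm\lambda\, c_{\alpha,\beta}$ or $\pm\mu\, c_{\alpha,\beta}$. In each two-cycle the two sign choices agree, so composing the two steps produces the uniform factor $\lambda\mu\, c_{\alpha,\beta}^2 = -(a^2+b^2)c_{\alpha,\beta}^2$ on every basis vector, which by $\mathbb{C}$-linearity yields the lemma. The only real obstacle is bookkeeping the signs from Proposition \ref{Chevally}(a)--(d); once those are pinned down, the remainder is mechanical.
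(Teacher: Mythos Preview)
Your proof is correct. Both you and the paper reduce to a Chevalley-basis computation using Proposition~\ref{Chevally}, but the organization differs. The paper works in the real basis $X_\alpha, JX_\alpha, X_\beta, JX_\beta$: it first computes $[X_\delta,[X_\delta,\,\cdot\,]_{\alpha,\beta}]_{\alpha,\beta}$ on $X_\alpha, X_\beta$ directly, and then invokes Lemma~\ref{1mel} (which uses the integrability of $J$) to transfer the result to the variants with $JX_\delta$ and $JX_\alpha$, assembling the general $\widetilde X_\delta = aX_\delta + bJX_\delta$ case from these pieces. Your route complexifies from the start, writing $\widetilde X_\delta = \lambda E_\delta + \mu E_{-\delta}$ with $\lambda\mu = -(a^2+b^2)$, and observes that $T = [\widetilde X_\delta,\,\cdot\,]_{\alpha,\beta}$ swaps $E_\alpha \leftrightarrow E_{-\beta}$ and $E_{-\alpha} \leftrightarrow E_\beta$ with scalars whose product is $\lambda\mu\, c_{\alpha,\beta}^2$ in each cycle. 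This packages the $(a,b)$-dependence cleanly and bypasses Lemma~\ref{1mel} altogether; the paper's version, on the other hand, keeps everything inside the real form $\mathfrak{g}$ and ties the computation to the complex structure $J$ already developed.
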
 

\begin{proof} 
Using the properties of the structure constants (Proposition \ref{Chevally}) we have 
\begin{align*}
[X_{\delta},X_{\alpha}]
=&[E_\delta-E_{-\delta},E_\alpha-E_{-\alpha}] \\
=&c_{\delta,\alpha}E_{\alpha+\delta}+c_{-\delta,-\alpha}E_{-\alpha-\delta}-c_{-\delta,\alpha}E_{\beta}-c_{\delta,-\alpha}E_{\beta}\\
=&c_{\delta,\alpha}X_{\alpha+\delta}-c_{\delta,-\alpha}X_{\beta}
\end{align*}

\noindent So finally we have $[X_{\delta},X_{\alpha}]_{\alpha,\beta} = -c_{\delta,-\alpha}X_{\beta}$ also have $[X_{\delta},X_{\beta}]_{\alpha,\beta} = -c_{\delta,-\beta}X_{\alpha}$.

As a consequence we have
\begin{align*}
[X_{\delta},[X_{\delta},X_{\alpha}]_{\alpha,\beta}]_{\alpha,\beta} &=-c_{\alpha,\beta}^2X_{\alpha} \\
[X_{\delta},[X_{\delta},X_{\beta}]_{\alpha,\beta}]_{\alpha,\beta} &=-c_{\alpha,\beta}^2X_{\beta}
\end{align*}

Using these calculations and Lemma \ref{1mel} we also observe that 
\begin{align*}
[X_{\delta},[X_{\delta},JX_{\alpha}]_{\alpha,\beta}]_{\alpha,\beta}&=-c_{\alpha,\beta}^2JX_{\alpha}\\
[JX_{\delta},[JX_{\delta},X_{\alpha}]_{\alpha,\beta}]_{\alpha,\beta}&=-c_{\alpha,\beta}^2X_{\alpha}\\
[JX_{\delta},[JX_{\delta},JX_{\alpha}]_{\alpha,\beta}]_{\alpha,\beta}&=-c_{\alpha,\beta}^2JX_{\alpha}
\end{align*}

Using these equations and Lemma \ref{1mel} the lemma follows. 
 \end{proof}

We can define an operator $I_{a,b}:V_{\alpha} \oplus V_{\beta} \rightarrow V_{\alpha} \oplus V_{\beta}$ by $$I_{a,b}X = \frac{1}{(a^2+b^2)^{\frac{1}{2}}|c_{\alpha,\beta}|}[\widetilde{X}_{\delta},X]_{\alpha,\beta}$$

We can readily see that $I_{a,b}^2 = -Id$ and $I_{a,b}JX = -I_{a,b}JX$ which follows from Lemma \ref{1mel} and \ref{2mel}. Let $\mathcal{S}$ be a set of unordered pairs $\{ \alpha,\beta \} \subset \Delta^+$ such that $\alpha+\beta = \delta$. We can naturally extend $I_{a,b}$ to be linear operator on the subspace $$S_0 = \bigoplus_{\{ \alpha,\beta \} \in \mathcal{S}} V_{\alpha} \oplus V_{\beta}$$ 
\begin{lemma}\label{themapI}
The map $I_{a,b}:S_0 \rightarrow S_0$ defined above satisfies the following properties 

\begin{enumerate}[(a)]

\item $I_{a,b}^2 = -Id$

\item $I_{a,b}J = -JI_{a,b}$

\item For $X \in S_0$ we have $|I_{a,b}X|=|X|$

\item If $X \in S_0$ then $\< [I_{a,b}X,X],\widetilde{X}_{\delta}{\rangle}\; \leqslant -N_0(a^2+b^2)^{\frac{1}{2}}|X|^2$ where $N_0$ is a constant only dependent on the Lie algebra $\mathfrak{g}$ where $\widetilde{X}_{\delta} = aX_{\delta}+bJX_{\delta}$
\end{enumerate}
\end{lemma}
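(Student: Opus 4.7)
The plan is to verify each of the four properties by decomposing an arbitrary $X \in S_0$ into its components in the orthogonal summands $V_\alpha \oplus V_\beta$ indexed by $\{\alpha,\beta\} \in \mathcal{S}$, then combining Lemma \ref{2mel} (for (a) and (c)), Lemma \ref{1mel} (for (b)), and associativity of the bracket with the metric (for (c) and (d)). On each summand the required formula reduces to a one-pair identity that follows from the preceding lemmas, and the cross terms between different pairs will drop out by root-space orthogonality.

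Parts (a) and (c) are essentially restatements of Lemma \ref{2mel}. For (a) on a single summand, substituting the defining formula for $I_{a,b}$ into itself yields
\begin{equation*}
I_{a,b}^2 X = \frac{1}{(a^2+b^2)c_{\alpha,\beta}^2}[\widetilde{X}_\delta,[\widetilde{X}_\delta,X]_{\alpha,\beta}]_{\alpha,\beta} = -X
\end{equation*}
by Lemma \ref{2mel}. For (c), I would write $|I_{a,b}X|^2$ as $\< [\widetilde{X}_\delta,X]_{\alpha,\beta},[\widetilde{X}_\delta,X]_{\alpha,\beta}{\rangle}/((a^2+b^2)c_{\alpha,\beta}^2)$, replace one factor by the unprojected bracket $[\widetilde{X}_\delta,X]$ using orthogonality of distinct real root spaces, apply associativity to move one $\widetilde{X}_\delta$ across, and then invoke Lemma \ref{2mel} once more to collapse the right-hand side to $|X|^2$.

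For (b), I would first check the hypothesis of Lemma \ref{1mel} with $Y = \widetilde{X}_\delta$ and $X\in V_\alpha\oplus V_\beta$: since $\widetilde{X}_\delta^{0,1}$ is a scalar multiple of $E_{-\delta}$ and $X^{1,0}\in \mathrm{span}_{\mathbb{C}}\{E_\alpha,E_\beta\}$, the bracket $[X^{1,0},\widetilde{X}_\delta^{0,1}]_\mathfrak{m}$ lies in $V_{-\beta}^{\mathbb{C}}\oplus V_{-\alpha}^{\mathbb{C}}\subset \mathfrak{m}^{0,1}$, and the hypothesis is satisfied. Lemma \ref{1mel}(b) then gives
\begin{equation*}
[\widetilde{X}_\delta,JX]_\mathfrak{m} = -J[\widetilde{X}_\delta,X]_\mathfrak{m} + i[\widetilde{X}_\delta^{1,0},X^{1,0}]_\mathfrak{m} + i[\widetilde{X}_\delta^{0,1},X^{0,1}]_\mathfrak{m}.
\end{equation*}
The two correction brackets land in the root spaces $V_{\delta\pm\alpha}^{\mathbb{C}}\oplus V_{\delta\pm\beta}^{\mathbb{C}}$, all orthogonal to $V_\alpha\oplus V_\beta$, so projecting onto $V_\alpha\oplus V_\beta$ gives $[\widetilde{X}_\delta,JX]_{\alpha,\beta} = -J[\widetilde{X}_\delta,X]_{\alpha,\beta}$, which is exactly $I_{a,b}J = -JI_{a,b}$.

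For (d), on a single summand associativity and orthogonality yield
\begin{equation*}
\< [I_{a,b}X,X],\widetilde{X}_\delta{\rangle} = -\frac{|[\widetilde{X}_\delta,X]_{\alpha,\beta}|^2}{(a^2+b^2)^{1/2}|c_{\alpha,\beta}|} = -(a^2+b^2)^{1/2}|c_{\alpha,\beta}|\,|X|^2,
\end{equation*}
the last equality by (c). For general $X=\sum_k X_k$ with $X_k\in V_{\alpha_k}\oplus V_{\beta_k}$, the cross terms $\< [I_{a,b}X_k,X_l],\widetilde{X}_\delta{\rangle}$ with $k\neq l$ vanish: a nonzero $V_\delta$-component of $[V_{\alpha_k}\oplus V_{\beta_k},V_{\alpha_l}\oplus V_{\beta_l}]$ would force some root to belong to two distinct pairs of $\mathcal{S}$, which is impossible since $\alpha+\beta=\delta$ determines $\beta$ given $\alpha$. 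Summing the diagonal contributions produces the bound with $N_0 = \min_{\{\alpha,\beta\}\in\mathcal{S}}|c_{\alpha,\beta}|$, a quantity depending only on $\mathfrak{g}$. The only mildly delicate step is (b), where one must verify both that Lemma \ref{1mel} applies and that its correction terms sit in root spaces outside $V_\alpha\oplus V_\beta$; everything else is bookkeeping around Lemma \ref{2mel} and the associativity of the Killing form.
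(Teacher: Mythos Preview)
Your proposal is correct and follows essentially the same approach as the paper: parts (a) and (c) via Lemma \ref{2mel} together with root-space orthogonality and associativity, part (b) via Lemma \ref{1mel}, and part (d) via associativity and part (c), with $N_0$ taken as the minimum of the $|c_{\alpha,\beta}|$. Your treatment is in fact more explicit than the paper's in two places---you spell out why the correction terms in Lemma \ref{1mel}(b) vanish under the projection to $V_\alpha\oplus V_\beta$, and you verify directly that the cross terms in (d) vanish---whereas the paper handles (d) by first moving $\widetilde{X}_\delta$ across via associativity and then decomposing, which amounts to the same thing.
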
 

\begin{proof}
For the sake of convenience we refer to $I_{a,b}$ as $I$ in the proof. For (a) and (b) this follows from the preceding discussion.

(c) \begin{align}
\< IX,IX{\rangle}&=\sum_{\alpha+\beta=\delta}\frac{1}{(a^2+b^2)|c_{\alpha,\beta}|^2}\< 
[\widetilde{X}_{\delta},X]_{\alpha,\beta},[\widetilde{X}_{\delta},X]_{\alpha,\beta}{\rangle}\\
&=\sum_{\alpha+\beta=\delta}\frac{-1}{(a^2+b^2)|c_{\alpha,\beta}|^2}\< 
X_{\alpha,\beta},[\widetilde{X}_{\delta},[\widetilde{X}_{\delta},X]_{\alpha,\beta}]{\rangle}\\
&=\sum_{\alpha+\beta=\delta}\< X_{\alpha,\beta},X_{\alpha,\beta}{\rangle}
\end{align}

In the first line we use the fact that the root spaces $V_{\alpha}$ are orthogonal. In the second line we use associativity of bracket (\ref{associativity}), the fact that $\delta+\alpha, \delta+\beta \notin \mathcal{S}$ and also Lemma \ref{2mel}.

(d) 
In the computation we use the associativity of the bracket and part (c) of this lemma. 
\begin{align*}
\< [IX,X],\widetilde{X}_{\delta}{\rangle}
&=-\< IX ,[\widetilde{X}_{\delta},X]{\rangle}\\
&=-\sum_{\{\alpha,\beta\} \in \mathcal{S}}\< IX ,[\widetilde{X}_{\delta},X]_{\alpha,\beta}{\rangle}\\
&=-\sum_{\{\alpha,\beta\} \in \mathcal{S}}{(a^2+b^2)^{\frac{1}{2}}|c_{\alpha,\beta}|} 
|IX_{\alpha,\beta}|^2\\
&=-\sum_{\{\alpha,\beta\} \in \mathcal{S}}{(a^2+b^2)^{\frac{1}{2}}|c_{\alpha,\beta}|} 
|X_{\alpha,\beta}|^2\\
&< -{(a^2+b^2)^{\frac{1}{2}}}N_0 
|X|^2\\
\end{align*}
 \end{proof}

\section{The complex hat connection}

In this Section we assume that $G/K$ is equipped with an invariant integrable complex structure $J$ such that the normal metric is hermitian. Let $M$ and $N$ be two complex submanifolds of dimensions $m,n$ respectively. Let  $\gamma :[0,1] \rightarrow G/K $ be a critical point to the energy functional on the space of paths joining $M$ and $N$ and so it is a geodesic perpendicular to the both manifolds at the endpoints. 

If $X(t)$ is any vector field along the geodesic $\gamma$ such that $X(0)$ and $X(1)$ are in the tangent space of $M$ and $N$ respectively then $X(t)$ is called admissible. For an admissible vector field $X(t)$ we recall the second variation formula\cite{S-W} 

$$E_{**}(X,X) =\< \widetilde{\nabla}_{X}X{\rangle}|_0^1+\int_{0}^{1}
\< \widetilde{\nabla}_{\dot{\gamma}}X,\widetilde{\nabla}_{\dot{\gamma}}X{\rangle}
-\< R(\dot{\gamma},X)X,\dot{\gamma}){\rangle}dt$$  
Where $\widetilde{\nabla}$ is the Levi-Civita connection. Observe that if $X(t)$ is admissible then $JX(t)$ is admissible too, as a consequence the following quadratic form can be defined.

\begin{definition}
The following quantity is defined as the complex energy hessian.
\begin{align}E_{**}^{\mathbb{C}}(X,X) = \frac{1}{2}E_{**}(X,X)+\frac{1}{2}E_{**}(JX,JX)
\end{align}
\end{definition}

\begin{definition}
Define the complex-hat connection $\widehat{\nabla}$  by
$$\widehat{\nabla}_{Y}X = \nabla_{Y}X + \frac{1}{2}R_{Y}(X)$$where $R_{Y}(X) =[Y,X]_\mathfrak{m}+J[JY,X]_\mathfrak{m}$.
\end{definition}

\subsection{The second variation formula}

In this section we rewrite the second variation formula. 
\begin{theorem}
Suppose that $X(t)$ is admissible and parallel with respect to the complex hat connection. Then  
$$E_{**}^{\mathbb{C}}(X,X)=-\int_{0}^{1}\frac{1}{2}(|R_{\dot{\gamma}}(X)|^2)+|[X,\dot{\gamma}]_{\kt}|^2+|[JX,\dot{\gamma}]_{\kt}|^2dt$$
\label{thm.ceh1}
\end{theorem}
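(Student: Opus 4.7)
The plan is to apply the second variation formula to $X$ and to $JX$, average the two using the definition of $E_{**}^{\mathbb{C}}$, and simplify using the hypothesis that $X$ is $\widehat{\nabla}$-parallel together with the integrability of $J$.

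First, I would translate $\widehat{\nabla}_{\dot\gamma}X=0$ into an explicit formula for the Levi--Civita derivative: combining the definitions of $\widehat{\nabla}$ and $\widetilde{\nabla}$ in terms of the canonical connection produces
$$\widetilde{\nabla}_{\dot\gamma}X \;=\; \widehat{\nabla}_{\dot\gamma}X - \tfrac12 R_{\dot\gamma}(X) + \tfrac12 [\dot\gamma,X]_\mathfrak{m} \;=\; -\tfrac12\, J[J\dot\gamma,X]_\mathfrak{m},$$
and hence $|\widetilde{\nabla}_{\dot\gamma}X|^2=\tfrac14|[J\dot\gamma,X]_\mathfrak{m}|^2$. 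I would then verify that $\widehat{\nabla}$ commutes with $J$: combining $\nabla J=0$ from Proposition~\ref{integrability}(a) with the integrability relation (b) yields $\widehat{\nabla}_Y(JX)=J\widehat{\nabla}_YX$, so $JX$ is also $\widehat{\nabla}$-parallel along $\gamma$ and satisfies the analogous identity. A further corollary of integrability, which is central to the algebra below, is the identity $R_{\dot\gamma}(JX)=JR_{\dot\gamma}(X)$ (obtained by substituting $Y=\dot\gamma$ into Proposition~\ref{integrability}(b) and collecting terms); in particular $|R_{\dot\gamma}(X)|^2=|R_{\dot\gamma}(JX)|^2$.

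Next, I would insert the curvature formula $\langle R(\dot\gamma,X)X,\dot\gamma\rangle=\tfrac14|[\dot\gamma,X]_\mathfrak{m}|^2+|[\dot\gamma,X]_\kt|^2$ from Section~3 into the second variation formulas for $X$ and $JX$, and average. Since $M$ and $N$ are complex submanifolds, $JX$ remains admissible whenever $X$ is, and the averaged boundary term vanishes using the orthogonality of $\gamma$ to $M$ and $N$ together with the $J$-invariance of their tangent spaces. The resulting integrand then splits into a $\kt$-part and an $\mathfrak{m}$-part: the $\kt$-part is exactly $-|[X,\dot\gamma]_\kt|^2-|[JX,\dot\gamma]_\kt|^2$, read off directly from the curvature formula, while the $\mathfrak{m}$-part must be collapsed to $-\tfrac12|R_{\dot\gamma}(X)|^2$.

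That collapse is the main obstacle. Rewriting $\widetilde{\nabla}_{\dot\gamma}X=-\tfrac12 R_{\dot\gamma}(X)+\tfrac12[\dot\gamma,X]_\mathfrak{m}$ (and its $JX$ analogue), and expanding $|R_{\dot\gamma}(X)|^2 = |[\dot\gamma,X]_\mathfrak{m}|^2+|[J\dot\gamma,X]_\mathfrak{m}|^2+2\langle[\dot\gamma,X]_\mathfrak{m},J[J\dot\gamma,X]_\mathfrak{m}\rangle$, the $\mathfrak{m}$-part becomes a quadratic expression in the four brackets $[\dot\gamma,X]_\mathfrak{m}$, $[\dot\gamma,JX]_\mathfrak{m}$, $[J\dot\gamma,X]_\mathfrak{m}$, $[J\dot\gamma,JX]_\mathfrak{m}$ together with cross terms of the form $\langle R_{\dot\gamma}(X),[\dot\gamma,X]_\mathfrak{m}\rangle$. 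The plan is to apply Proposition~\ref{integrability}(b) in the rearranged form $[J\dot\gamma,X]_\mathfrak{m}=J[\dot\gamma,X]_\mathfrak{m}+[\dot\gamma,JX]_\mathfrak{m}-J[J\dot\gamma,JX]_\mathfrak{m}$ (and its $J$-conjugate with $X$ replaced by $JX$) to rewrite the mixed brackets, and to use $R_{\dot\gamma}(JX)=JR_{\dot\gamma}(X)$ to pair the contributions of $X$ and $JX$ symmetrically. The four squared terms and the cross terms then recombine into $-\tfrac12|R_{\dot\gamma}(X)|^2$, and integrating over $[0,1]$ yields the stated identity.
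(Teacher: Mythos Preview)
Your argument has a genuine gap at the boundary term. You assert that ``the averaged boundary term vanishes using the orthogonality of $\gamma$ to $M$ and $N$ together with the $J$-invariance of their tangent spaces,'' but this is false in the non-symmetric case. What orthogonality and $J$-invariance give you is only that $\langle J[JX,X],\dot\gamma\rangle=0$ at the endpoints, where $[JX,X]$ is the \emph{Lie bracket of vector fields} tangent to $M$ (resp.\ $N$). Carrying out the computation with the canonical connection (using $\nabla J=0$ and the torsion formula $T(X,Y)=-[X,Y]_\mathfrak{m}$) one finds instead
\[
\langle \widetilde{\nabla}_X X+\widetilde{\nabla}_{JX}JX,\dot\gamma\rangle\big|_0^1
=\langle -J[JX,X]_\mathfrak{m},\dot\gamma\rangle\big|_0^1,
\]
and the right-hand side involves the \emph{bracket tensor} $[\cdot,\cdot]_\mathfrak{m}$, which has no reason to be tangent to $M$ or $N$. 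This term is nonzero precisely when $G/K$ is not Hermitian symmetric, i.e.\ exactly in the new cases the paper treats.

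The paper does not discard this boundary term; it rewrites it as an integral by differentiating $t\mapsto\langle J[X,JX]_\mathfrak{m},\dot\gamma\rangle$ along $\gamma$ (using that $J$, $[\cdot,\cdot]_\mathfrak{m}$, $\langle\cdot,\cdot\rangle$ and $\dot\gamma$ are all $\nabla$-parallel). This produces an extra cross term $\int_0^1\langle\nabla_{\dot\gamma}X,\,[J\dot\gamma,JX]_\mathfrak{m}+J[J\dot\gamma,X]_\mathfrak{m}\rangle\,dt$, which combines with the cross term $\langle\nabla_{\dot\gamma}X,\,[\dot\gamma,X]_\mathfrak{m}-J[\dot\gamma,JX]_\mathfrak{m}\rangle$ coming from your direct expansion. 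Only after adding these two and invoking integrability does the sum collapse to $2\langle\nabla_{\dot\gamma}X,R_{\dot\gamma}(X)\rangle$, and then $\nabla_{\dot\gamma}X=-\tfrac12 R_{\dot\gamma}(X)$ gives the desired $-\tfrac12|R_{\dot\gamma}(X)|^2$. Without the boundary contribution your $\mathfrak{m}$-part is $\tfrac14\bigl(|[J\dot\gamma,X]_\mathfrak{m}|^2+|[J\dot\gamma,JX]_\mathfrak{m}|^2-|[\dot\gamma,X]_\mathfrak{m}|^2-|[\dot\gamma,JX]_\mathfrak{m}|^2\bigr)$, which is not $-\tfrac12|R_{\dot\gamma}(X)|^2$ in general, so the final recombination you sketch cannot succeed as stated.
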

\begin{proof}
Using the second variation formula above, we have  
\begin{align} \label{thm.ceh1.1}
E_{**}^{\mathbb{C}}(X,X) &=\< \widetilde{\nabla}_{X}X+\widetilde{\nabla}_{JX}JX,\dot{\gamma}{\rangle}|_0^1-\int_{0}^{1}
\< \widetilde{\nabla}_{\dot{\gamma}}X,\widetilde{\nabla}_{\dot{\gamma}}X{\rangle}+\< \widetilde{\nabla}_{\dot{\gamma}}JX,\widetilde{\nabla}_{\dot{\gamma}}JX{\rangle} \nonumber \\
&-\< R(\dot{\gamma},X)X,\dot{\gamma}){\rangle}-\< R(\dot{\gamma},JX)JX,\dot{\gamma}){\rangle}dt
\end{align}
We begin with the boundary term at $t = 0$. 
\begin{align}
\< \widetilde{\nabla}_{X}X+\widetilde{\nabla}_{JX}JX,\dot{\gamma}{\rangle}_{t=0} &= \< \nabla_{X}X+J\nabla_{JX}X,\dot{\gamma}{\rangle}_{t=0} \nonumber \\
&=\< \nabla_{X}X+J(\nabla_{X}JX+[JX,X]-[JX,X]_{\mathfrak{m}}),\dot{\gamma}{\rangle}_{t=0}\nonumber \\ 
&=\< -J[JX,X]_{\mathfrak{m}}),\dot{\gamma}{\rangle}_{t=0} \label{bdy90210}
\end{align}

In the first line we use the formula for the Levi-Civita connection along with fact that $J$ commutes with $\nabla$. In the next line we use the formula for the torsion of the canonical connection (Proposition \ref{cc4}). For the last line we observe that the variations for $X,JX$ at $t=0$ are tangent to $M$ hence $J[X,JX]$ is tangent too. As $\gamma$ is perpendicular to $M$, we have $\< J([JX,X]),\dot{\gamma}(0){\rangle}=0$.

We can make a similar conclusion for $t=1$ and we have the following.
\begin{align}
\< \widetilde{\nabla}_{X}X+\widetilde{\nabla}_{JX}JX,\dot{\gamma}{\rangle}|_0^1
=\< -J[JX,X]_{\mathfrak{m}},\dot{\gamma}{\rangle}{|_0^1} \label{bdy1}
\end{align}  

The covariant derivative with respect to the canonical connection vanishes for all $G$ invariant tensors (Corollary \ref{cc3}). Applying this to the tensors $J$, $[\cdot,\cdot]_\mathfrak{m}$ and the metric $\< \cdot,\cdot{\rangle}$ we have.
\begin{align}
\< -J[JX,&X]_{\mathfrak{m}},\dot{\gamma}{\rangle}{|_0^1} 
=\int_{0}^{1}\frac{d}{dt}\< J[X,JX]_\mathfrak{m},\dot{\gamma}{\rangle}dt \nonumber \\
&=\int_{0}^{1}\< J[\nabla_{\dot{\gamma}}X,JX]_{\mathfrak{m}}
+J[X,J\nabla_{\dot{\gamma}}X]_\mathfrak{m},\dot{\gamma}{\rangle} + \< J[X,JX]_\mathfrak{m},\nabla_{\dot{\gamma}}\dot{\gamma}{\rangle}dt \nonumber \\
&=\int_{0}^{1}\< \nabla_{\dot{\gamma}}X,[J\dot{\gamma},JX]_{\mathfrak{m}}+J[J\dot{\gamma},X]_\mathfrak{m}{\rangle}dt \label{bdy7}
\end{align}  

\noindent 
In the last line we use the associativity of the bracket (eq \ref{associativity}).

\noindent 
Now let us simplify the second term in equation (\ref{thm.ceh1.1}) and write it in terms of the canonical connection and apply the formula for the curvature (Proposition \ref{curvature})

\begin{align}
&\int_0^1|\widetilde{\nabla}_{\dot{\gamma}}X|^2+|\widetilde{\nabla}_{\dot{\gamma}}JX|^2-\< R(\dot{\gamma},X)X,\dot{\gamma}){\rangle}-\< R(\dot{\gamma},JX)JX,\dot{\gamma}){\rangle} dt\\
&=\int_0^1|\nabla_{\dot{\gamma}}X+\frac{1}{2}[\dot{\gamma},X]_\mathfrak{m}|^2+|J\nabla_{\dot{\gamma}}X+\frac{1}{2}[\dot{\gamma},JX]_\mathfrak{m}|^2-\< R(\dot{\gamma},X)X,\dot{\gamma}){\rangle} \nonumber \\ 
&-\< R(\dot{\gamma},JX)JX,\dot{\gamma}){\rangle}dt\\
&=\int_0^12|\nabla_{\dot{\gamma}}X|^2+\< \nabla_{\dot{\gamma}}X,[\dot{\gamma},X]_\mathfrak{m}{\rangle}-\< \nabla_{\dot{\gamma}}X,J[\dot{\gamma},JX]_\mathfrak{m}{\rangle} \nonumber \\
&-|[X,\dot{\gamma}]_{\kt}|^2-|[JX,\dot{\gamma}]_{\kt}|^2dt 
\label{CEH10}
\end{align}

Now combining eq (\ref{bdy1}) (\ref{bdy7}) and (\ref{CEH10}) in equation (\ref{thm.ceh1.1}) we have
\begin{align}
E_{**}^{\mathbb{C}}&(X,X) \nonumber \\
&=\int_{0}^{1}2|\nabla_{\dot{\gamma}}X|^2+\< \nabla_{\dot{\gamma}}X,[\dot{\gamma},X]_\mathfrak{m}-J[\dot{\gamma},JX]_\mathfrak{m}+[J\dot{\gamma},JX]_{\mathfrak{m}}+J[J\dot{\gamma},X]_\mathfrak{m}{\rangle} \nonumber \\
&-\< [X,\dot{\gamma}]_{\kt}[X,\dot{\gamma}]_{\kt}{\rangle}-\< [JX,\dot{\gamma}]_{\kt},[JX,\dot{\gamma}]_{\kt}{\rangle}dt \nonumber \\
&=\int_{0}^{1}2(|\nabla_{\dot{\gamma}}X|^2+\< \nabla_{\dot{\gamma}}X,[\dot{\gamma},X]_\mathfrak{m}+J[J\dot{\gamma},X]_\mathfrak{m}{\rangle})-\< [X,\dot{\gamma}]_{\kt},[X,\dot{\gamma}]_{\kt}{\rangle} \label{inter} \nonumber \\
&-\< [JX,\dot{\gamma}]_{\kt},[JX,\dot{\gamma}]_{\kt}{\rangle}dt \nonumber \\
&=\int_{0}^{1}2(\< \nabla_{\dot{\gamma}}X,\nabla_{\dot{\gamma}}X+R_{\dot{\gamma}}(X){\rangle}-|[X,\dot{\gamma}]_{\kt}|^2-|[JX,\dot{\gamma}]_{\kt}|^2dt
\end{align}

Note that we use the integrability condition (Proposition \ref{integrability}) in eq (\ref{inter}).  
As $\nabla_{\dot{\gamma}}X+ \frac{1}{2}R_{\dot{\gamma}}(X) =0$, the result follows.
 \end{proof}

\subsection{Properties of the complex-hat connection}

\begin{proposition}
\begin{enumerate}[a)]
  \item The complex-hat connection commutes with the complex structure $J$
  \item The parallel transport along a geodesic $\gamma$ with respect to the complex-hat connection preserves orthogonality between the geodesic and the transported vector. 
  \item If $\widehat{\nabla}_{\dot{\gamma}}X=0$ along a geodesic $\gamma$ and $R_{\dot{\gamma}}(X)$ vanishes at $t=0$ then $\nabla_{\dot{\gamma}}X = 0$ along $\gamma$. 

\end{enumerate}
\label{thm.ceh2}
\end{proposition}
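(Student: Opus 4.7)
The plan is to prove each of the three parts in turn, leaning throughout on the $G$-invariance of the tensors $J$ and $[\cdot,\cdot]_{\mathfrak{m}}$, which by Corollary \ref{cc3} are parallel with respect to the canonical connection $\nabla$.

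For part (a), the first step is to observe that since $J$ is $\nabla$-parallel, $\nabla_{Y}(JX)=J\nabla_{Y}X$, so the commutation $\widehat{\nabla}_{Y}(JX)=J\widehat{\nabla}_{Y}X$ reduces to the purely algebraic identity $R_{Y}(JX)=JR_{Y}(X)$. Expanding both sides, this is equivalent to
\[
[Y,JX]_{\mathfrak{m}}+[JY,X]_{\mathfrak{m}}-J[Y,X]_{\mathfrak{m}}+J[JY,JX]_{\mathfrak{m}}=0,
\]
which is obtained by applying $J$ to the integrability relation of Proposition \ref{integrability}(b) and then renaming the two arguments. This is a short verification.

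For part (b), the key observation is that since $\gamma$ is a geodesic, $\widetilde{\nabla}_{\dot{\gamma}}\dot{\gamma}=0$ and by antisymmetry of the bracket this also gives $\nabla_{\dot{\gamma}}\dot{\gamma}=0$. If $X$ is parallel along $\gamma$ with respect to $\widehat{\nabla}$, then $\nabla_{\dot{\gamma}}X=-\tfrac{1}{2}R_{\dot{\gamma}}(X)$. Using metric compatibility of $\nabla$ (the metric being $G$-invariant, hence $\nabla$-parallel), I would compute $\tfrac{d}{dt}\langle X,\dot{\gamma}\rangle=-\tfrac{1}{2}\langle R_{\dot{\gamma}}(X),\dot{\gamma}\rangle$ and show each summand of $R_{\dot{\gamma}}(X)$ pairs trivially with $\dot{\gamma}$: the term $\langle [\dot{\gamma},X]_{\mathfrak{m}},\dot{\gamma}\rangle$ vanishes by associativity (equation \ref{associativity}) and antisymmetry, while $\langle J[J\dot{\gamma},X]_{\mathfrak{m}},\dot{\gamma}\rangle=-\langle [J\dot{\gamma},X]_{\mathfrak{m}},J\dot{\gamma}\rangle$ by the Hermitian property and again vanishes by associativity. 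Hence $\langle X,\dot{\gamma}\rangle$ is constant along $\gamma$ and orthogonality is preserved.

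Part (c) is the subtlest. The plan is to set $Z(t):=R_{\dot{\gamma}(t)}(X(t))$ and show that $Z$ satisfies a homogeneous linear first-order ODE along $\gamma$ with vanishing initial data, so that uniqueness forces $Z\equiv 0$. Using the $\nabla$-parallelism of $J$ and $[\cdot,\cdot]_{\mathfrak{m}}$ together with $\nabla_{\dot{\gamma}}\dot{\gamma}=0$, the Leibniz rule gives
\[
\nabla_{\dot{\gamma}}Z=[\dot{\gamma},\nabla_{\dot{\gamma}}X]_{\mathfrak{m}}+J[J\dot{\gamma},\nabla_{\dot{\gamma}}X]_{\mathfrak{m}}=R_{\dot{\gamma}}(\nabla_{\dot{\gamma}}X).
\]
Substituting $\nabla_{\dot{\gamma}}X=-\tfrac{1}{2}Z$ yields $\nabla_{\dot{\gamma}}Z=-\tfrac{1}{2}R_{\dot{\gamma}}(Z)$, a linear ODE in $Z$ along $\gamma$. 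Since $Z(0)=R_{\dot{\gamma}(0)}(X(0))=0$ by hypothesis, uniqueness of solutions gives $Z\equiv 0$, and therefore $\nabla_{\dot{\gamma}}X=0$ along $\gamma$. The main obstacle is precisely recognizing that $R$, although not a tensor in $X$ alone, acts along $\gamma$ as a first-order linear operator on $Z$, and this relies crucially on both the invariance of all the building blocks of $R$ and on $\nabla_{\dot{\gamma}}\dot{\gamma}=0$.
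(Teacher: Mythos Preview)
Your proposal is correct and follows essentially the same approach as the paper. Parts (a) and (b) match the paper's proof almost verbatim: reducing (a) to the algebraic identity $R_Y(JX)=JR_Y(X)$ via the integrability condition, and handling (b) by differentiating $\langle X,\dot{\gamma}\rangle$ and killing both terms with associativity. For part (c), the paper phrases the argument slightly differently---it passes to a $\nabla$-parallel frame so that $R_{\dot{\gamma}}$ becomes a constant-coefficient linear map and then observes that $X(t)\equiv X(0)$ is the unique solution when $R_{\dot{\gamma}}(X(0))=0$---whereas you derive a linear ODE for $Z=R_{\dot{\gamma}}(X)$ directly and invoke uniqueness with zero initial data; these are two packagings of the same linear ODE argument.
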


\begin{proof}
\begin{enumerate}[(a)]
  \item
Using part (b) of Proposition \ref{integrability} it is easy to show that $R_{Y}(JX)=JR_{Y}(X)$. The result now follows from this observation and part (a) of Proposition \ref{integrability}.

\item

Let $X(t)$ be a vector field along a geodesic curve $\dot{\gamma}$ which is parallel with respect to the complex-hat connection i.e $\nabla_{\dot{\gamma}}X = -\frac{1}{2}R_{\dot{\gamma}}(X)$ then 
\begin{align}  
\frac{d}{dt}\< X(t),\dot{\gamma}(t){\rangle}&=\< \nabla_{\dot{\gamma}}X,\dot{\gamma}{\rangle}+\< X,\nabla_{\dot{\gamma}}\dot{\gamma}{\rangle} \nonumber \\
&=\< -\tfrac{1}{2}([\dot{\gamma},X]_\mathfrak{m},\dot{\gamma}{\rangle}+\< -J[J\dot{\gamma},X]_\mathfrak{m}),\dot{\gamma}{\rangle} + 0 \nonumber \\
&=\tfrac{1}{2}(\< X,[\dot{\gamma},\dot{\gamma}]_\mathfrak{m}{\rangle}-\< X,[J\dot{\gamma},J\dot{\gamma}]_\mathfrak{m}{\rangle}) \nonumber \\
&=0 \nonumber
\end{align}

In this computation we have used the associativity of the bracket (\ref{associativity}).

\item We can use a canonical connection parallel frame. All invariant tensors are parallel with respect to the canonical connection (Corollary \ref{cc3}) and so is  $\dot{\gamma}(t)$ (Theorem \ref{cc21}). Hence with respect to this frame $R_{\dot{\gamma}}(X)$ is a linear transformation with respect to $X$ with constant co-efficients, so parallel transport with respect to the complex-hat connection can be thought of as the solution to the linear ODE $\dot{X}(t)=-\frac{1}{2}R(X)$. From linear ODE Theory it is clear that $\dot{X}(t)=0$ iff $R(X(0))=0$ .
\end{enumerate}
 \end{proof}

\begin{theorem}
The complex energy hessian of a vector field $X(t)$ which is admissible and parallel with respect to the complex-hat connection is negative iff $[X^{1,0}(0),\dot{\gamma}^{0,1}(0)] \notin \mathfrak{m}^{0,1}$. \label{thm.ceh3}
\end{theorem}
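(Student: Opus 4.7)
The plan is to use the integral representation from Theorem \ref{thm.ceh1}. For $X$ admissible and parallel with respect to the complex-hat connection,
$$E_{**}^{\mathbb{C}}(X,X)=-\int_{0}^{1}F(t)\,dt,\qquad F(t):=\tfrac{1}{2}|R_{\dot{\gamma}}(X)|^2+|[X,\dot{\gamma}]_{\kt}|^2+|[JX,\dot{\gamma}]_{\kt}|^2,$$
and $F$ is pointwise non-negative. So $E_{**}^{\mathbb{C}}(X,X)\le 0$ automatically, and strict negativity amounts to $F\not\equiv 0$ on $[0,1]$. The strategy has two parts: (i) identify $F(0)=0$ with the algebraic condition $[X^{1,0}(0),\dot{\gamma}^{0,1}(0)]\in\mathfrak{m}^{0,1}$, and (ii) show $F(0)=0$ already forces $F\equiv 0$, so that the sign of $E_{**}^{\mathbb{C}}(X,X)$ is determined by data at the single endpoint $t=0$.

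Part (i) is purely algebraic. Lemma \ref{mel}(b) gives $R_{\dot{\gamma}}X(0)=0$ iff $[X^{1,0}(0),\dot{\gamma}^{0,1}(0)]_{\mathfrak{m}}\in\mathfrak{m}^{0,1}$. Lemma \ref{mel}(d), applied with $Y=\dot{\gamma}$ (and using $[JX,\dot{\gamma}]_{\kt}=-[\dot{\gamma},JX]_{\kt}$ for the sign), gives the simultaneous vanishing of $[X,\dot{\gamma}]_{\kt}(0)$ and $[JX,\dot{\gamma}]_{\kt}(0)$ iff $[X^{1,0}(0),\dot{\gamma}^{0,1}(0)]_{\kt}=0$. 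Together these two equivalences identify $F(0)=0$ with $[X^{1,0}(0),\dot{\gamma}^{0,1}(0)]\in\mathfrak{m}^{0,1}$.

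The main obstacle is part (ii): propagating $F(0)=0$ to $F\equiv 0$. Assuming $R_{\dot{\gamma}}X(0)=0$, Proposition \ref{thm.ceh2}(c) upgrades the complex-hat parallelism of $X$ to full canonical-connection parallelism $\nabla_{\dot{\gamma}}X=0$ along the entire geodesic $\gamma$. Since $\dot{\gamma}$ is also $\nabla$-parallel (Theorems \ref{cc2} and \ref{cc21}) and all the relevant invariant tensors --- the complex structure $J$, the bracket projections $[\cdot,\cdot]_{\mathfrak{m}}$ and (the norm-squared of) $[\cdot,\cdot]_{\kt}$, and the metric $\langle\cdot,\cdot\rangle$ --- are $\nabla$-parallel by Corollary \ref{cc3}, in any $\nabla$-parallel frame along $\gamma$ the components of $X$, $\dot{\gamma}$, and the structural tensors are all constant. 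Consequently each of the three non-negative summands of $F(t)$ is constant on $[0,1]$; vanishing at $t=0$ forces vanishing throughout. Combining with (i), $E_{**}^{\mathbb{C}}(X,X)<0$ iff $F(0)\neq 0$ iff $[X^{1,0}(0),\dot{\gamma}^{0,1}(0)]\notin\mathfrak{m}^{0,1}$, which is exactly the claimed equivalence.
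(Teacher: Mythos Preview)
Your argument is correct and follows essentially the same route as the paper: both use Theorem~\ref{thm.ceh1} to reduce to the pointwise non-negative integrand, invoke Lemma~\ref{mel}(b),(d) to translate the vanishing of the integrand at $t=0$ into the algebraic condition on $[X^{1,0}(0),\dot{\gamma}^{0,1}(0)]$, and then use Proposition~\ref{thm.ceh2}(c) together with $\nabla$-parallelism of the invariant tensors to propagate the vanishing along $\gamma$. Your write-up is in fact more explicit than the paper's in separating the algebraic identification (i) from the propagation step (ii), but the content is the same.
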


\begin{proof}  
The sufficient condition follows from Theorem \ref{thm.ceh1} and Lemma \ref{mel}. For the necessary condition, now suppose that $[X^{1,0}(0),\dot{\gamma}^{0,1}(0)] \in \mathfrak{m}^{0,1}$. That implies that $R_{\dot{\gamma}}(X(0)) = 0$ and $|[X(0),\dot{\gamma}(0)]_{\kt}|^2+|[JX(0),\dot{\gamma}(0)]_{\kt}|^2 = 0$ by Lemma \ref{mel}. $R_{\dot{\gamma}}(X(0)) = 0$ implies that $X(t)$ is parallel with respect to the canonical connection by part \textit{c}) of Proposition \ref{thm.ceh2}. But that implies that the value $|[X(t),\dot{\gamma}]_{\kt}(t)|^2+|[JX(t),\dot{\gamma}]_{\kt}(t)|^2$ remains zero along the geodesic and that implies that the complex energy hessian is zero.   
 \end{proof}

\section{Index calculations}

In this Section we work with the complex homogeneous space $G^c/P$ where $G^c$ is a complex simple Lie group and $P$ a parabolic subgroup. This manifold can also be written as $G/K$ where $G$ is a compact Lie group and $K$ a closed subgroup. Equip $G/K$ with the normal metric. Let $M$ and $N$ be two complex submanifolds of dimensions $m,n$ respectively, with the dimension of $G/K$ equal to $v$. Let $\gamma :[0,1] \rightarrow G/K$ be a critical point to the energy functional on the space of paths joining $M$ and $N$. Thus $\gamma$ is a geodesic perpendicular to both manifolds at the endpoints. In this Section we will focus on giving a lower bound on the index of each geodesic in terms of $m,n,v$ and an invariant $\ell$ of the Lie algebra $\mathfrak{g}$.

Recall that $\Delta$ and $\Delta_{{\kt}}$ denote the roots associated to the root system of $\mathfrak{g}$ and ${\kt}$ respectively. $\Delta_{\mathfrak{m}}$ denote the roots complementary to $\Delta_{\kt}$. $V_{\alpha} \subset \mathfrak{g}$ is the real root space associated to $\alpha \in \Delta^+$. We define an ordering on $\Delta^+$ by $\alpha <  \delta$ if and only if $\delta-\alpha$ is a positive root. This ordering is not partial.  
After applying an isometry we can identify $T_{\gamma(0)}G/K$ with $\mathfrak{m}$, so $\dot{\gamma}(0) \in \mathfrak{m}$. Define $\Gamma = \{ \alpha \in \Delta_{\mathfrak{m}}^+ | \dot{\gamma}(0) \text{ has a non-trivial component in } V_{\alpha} \} $. Let $\delta \in \Gamma$ be a minimal element such that it also satisfies the following, $\alpha < \beta $ and $\beta < \delta$ implies that $\alpha \notin \Gamma$, we will refer to such $\delta$ as superminimal.    

\begin{align}
\mathcal{S}_{\delta} &= \{ \alpha \in \Delta_{\mathfrak{m}}^+ |\alpha <  \delta \text{ and } \delta - \alpha \in \Delta_{\mathfrak{m}}^+  \} \\
\mathcal{T}_{\delta} &= \{ \beta \in \Delta_{\mathfrak{m}}^+ |\beta \geq \delta \} \cup \{ \alpha \in \Delta_{\mathfrak{m}}^+ |\delta - \alpha \in \Delta_{{\kt}} \} 
\end{align} 

To derive an optimal lower bound on the index we impose the following conditions on the sets $\mathcal{S}_{\delta}$ and $\mathcal{T}_{\delta}$. These conditions will be shown to be true if $\mathfrak{g}$ is a simply laced Lie algebra i.e Lie algebras of the type A, D and E in Section \ref{liecalc}. In the non-simply laced case the conditions are satisfied for most $\delta$. For the remaining choices of $\delta$ the arguments given below can be modified.

\begin{ass}
If $\beta_0 , \beta_1 \in \mathcal{T}_{\delta} \backslash \{ \delta \}$ with $\beta_0 \neq \beta_1$ then $\beta_0 - \delta  \neq \beta_1 -\lambda $ for $\lambda \in \Gamma$ 
\end{ass}

\begin{ass}
If $\alpha, \beta  \in \mathcal{S}_{\delta}$ then $\alpha + \beta \in \Gamma$ iff it is equal to $\delta$.
\end{ass} 

Observe that if $\alpha <  \delta $ then $\alpha , \delta -\alpha$ cannot both lie in $\Delta_{{\kt}}^+$ from the construction of $\kt$, it follows that $|\mathcal{S}_{\delta}|$ is even. Let $\ell = \frac{1}{2}|\mathcal{S}_{\delta}| + |\mathcal{T}_{\delta}|$ and let $h = \frac{1}{2}|\mathcal{S}_{\delta}|$. 

The main goal of this Section is to prove the following theorem. 

\begin{theorem} \label{mainthm}
If conditions 1 and 2 are satisfied then the index of the geodesic $\gamma$ is at least $\mathcal{I} = m+n-(v-\ell)-v+1$   
\end{theorem}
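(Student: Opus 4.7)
My plan is to exhibit a $J$-invariant real subspace $W$ of admissible variations along $\gamma$, of complex dimension at least $\mathcal I$, on which the complex energy Hessian $E_{**}^{\mathbb{C}}$ is negative definite. The index bound for $E_{**}$ on $W$ then follows from a $J$-averaging argument: if $V'\subset W$ is a subspace on which $E_{**}$ is positive semidefinite, then $V'\cap JV'=\{0\}$, since any $X$ in the intersection would give both $E_{**}(X,X)\ge 0$ and $E_{**}(JX,JX)\ge 0$, contradicting $E_{**}(X,X)+E_{**}(JX,JX)=2E_{**}^{\mathbb{C}}(X,X)<0$. Hence $\dim_{\mathbb R}V'\le\tfrac12\dim_{\mathbb R}W=\dim_{\mathbb C}W$, and the index of $E_{**}|_W$ is at least $\dim_{\mathbb C}W$.

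For the admissibility count, Proposition~\ref{thm.ceh2} shows that complex-hat parallel transport $P$ commutes with $J$ and preserves orthogonality to $\dot\gamma$; combined with the $J$-invariance of $T_{\gamma(0)}M$ and $T_{\gamma(1)}N$ (since $M,N$ are complex) and the skew-adjointness of $J$ for the Hermitian normal metric, this places the $J$-invariant space $\mathcal A$ of admissible complex-hat parallel fields (identified with $T_{\gamma(0)}M\cap P^{-1}T_{\gamma(1)}N\subset\mathfrak m$) inside the $(v-1)$-complex-dimensional subspace $(\mathrm{span}_{\mathbb R}\{\dot\gamma(0),J\dot\gamma(0)\})^{\perp}$. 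This gives $\dim_{\mathbb C}\mathcal A\ge m+n-(v-1)=m+n-v+1$.

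For the negativity, Theorem~\ref{thm.ceh3} says that a complex-hat parallel field $X$ has $E_{**}^{\mathbb{C}}(X,X)<0$ iff $[X^{1,0}(0),\dot\gamma(0)^{0,1}]\notin\mathfrak m^{0,1}$. A root-by-root analysis of this bracket, leveraging the superminimality of $\delta$ and Condition~1 (which prevents cancellations among contributions from different components of $\dot\gamma(0)\in\bigoplus_{\lambda\in\Gamma}V_\lambda$), identifies a $J$-invariant complex subspace of $\mathfrak m$ of complex dimension $|\mathcal T_\delta|$ whose parallel fields give strictly negative $E_{**}^{\mathbb{C}}$. For initial values in the $\mathcal S_\delta$-root spaces, parallel fields are null for $E_{**}^{\mathbb{C}}$; but admissible non-parallel variations of the form $Y=X+c\,\phi(t)I_{a,b}X$ (with $\phi(0)=\phi(1)=0$) produce, via the second variation formula, a negative contribution driven by the term $\langle[I_{a,b}X,X],\widetilde X_\delta\rangle\le -N_0(a^2+b^2)^{1/2}|X|^2$ of Lemma~\ref{themapI}(d). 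Averaging under the quaternionic action of $(J,I_{a,b})$ on each pair's four real dimensions, and invoking Condition~2 to eliminate interference between distinct pairs as well as with the $\mathcal T_\delta$-layer, yields $h$ additional complex dimensions of negativity. Parameterizing the perturbed fields by their initial data in $\mathfrak m$, the total yields a $J$-invariant subspace $W_0\subset\mathfrak m$ of complex dimension $|\mathcal T_\delta|+h=\ell$ whose corresponding admissible fields form a subspace on which $E_{**}^{\mathbb{C}}$ is negative definite.

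Finally, $\dim_{\mathbb C}(\mathcal A\cap W_0)\ge\dim_{\mathbb C}\mathcal A+\dim_{\mathbb C}W_0-v\ge(m+n-v+1)+\ell-v=\mathcal I$, and taking $W$ to be the corresponding space of (possibly perturbed) admissible fields concludes the proof via the first paragraph. The principal obstacle lies in the third paragraph: justifying the $I_{a,b}$-perturbation argument rigorously---that is, showing it contributes exactly $h$ complex dimensions of negativity without double-counting against the $\mathcal T_\delta$-layer or among the pairs themselves---and verifying that Conditions~1 and~2 suffice to eliminate all cross-interactions between the various root contributions appearing in the second variation formula.
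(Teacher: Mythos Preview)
Your overall architecture matches the paper's, but the specific mechanism you propose for extracting negativity from the $\mathcal S_\delta$-layer does not work. You suggest perturbations $Y=X+c\,\phi(t)\,I_{a,b}X$ with $\phi(0)=\phi(1)=0$, so that admissibility of $I_{a,b}X$ at the endpoints is not needed. But compute the second variation: since $X$ and $I_{a,b}X$ are canonical-parallel, $\nabla_{\dot\gamma}Y=c\,\phi'\,I_{a,b}X$, and the only term linear in $c$ in $G(Y,Y)=\int|\nabla_{\dot\gamma}Y|^2+\langle\nabla_{\dot\gamma}Y,[\dot\gamma,Y]_{\mathfrak m}\rangle$ is
\[
c\int_0^1\phi'(t)\,\bigl\langle I_{a,b}X,[\dot\gamma,X]_{\mathfrak m}\bigr\rangle\,dt
\;=\;c\,\bigl\langle I_{a,b}X,[\dot\gamma,X]_{\mathfrak m}\bigr\rangle\int_0^1\phi'(t)\,dt
\;=\;0,
\]
because the inner product is constant along $\gamma$ (everything is canonical-parallel) and $\phi(1)-\phi(0)=0$. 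The same happens after adding $E_{**}(JY,JY)$. So there is no first-order negative term to dominate the $O(c^2)$ cost, and your perturbation does not push $E_{**}^{\mathbb C}$ below zero. The inequality of Lemma~\ref{themapI}(d) is exactly the constant whose integral you have just killed.

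This is why the paper does \emph{not} localize the perturbation at the endpoints. It takes $Z=\cos(kt)X-\sin(kt)I X$, so that $\nabla_{\dot\gamma}Z=-k\,IZ$ and the linear-in-$k$ term becomes $-k\int\langle IZ,[\dot\gamma,Z]_{\mathfrak m}\rangle$, which equals $-k\langle IX,[\dot\gamma,X]_{\mathfrak m}\rangle$ identically and is genuinely negative. The price is that $IX$ must itself be admissible; this forces the passage from $S_1=S_0\cap U$ to $S=S_1\cap IS_1$, and the lost $h$ dimensions are exactly why the intersection with $U_0$ is taken at complex dimension $\ell+h$ rather than $\ell$. Your dimension count ``$(\mathcal A\cap W_0)$ with $\dim_{\mathbb C}W_0=\ell$'' therefore also breaks down: there is no $\ell$-dimensional subspace of $\mathfrak m$ parameterizing the perturbed fields. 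Finally, $J$-averaging alone is insufficient on the $\mathcal S_\delta$-layer even with the paper's $Z$: the boundary terms $\langle\widetilde\nabla_Z Z,\dot\gamma\rangle|_0^1$ do not cancel pairwise under $Z\mapsto JZ$, but only after the full quaternionic sum $Q(Z)=\sum E_{**}$ over $Z,\bar IZ,\bar JZ,\bar I\bar JZ$, and the reconciliation with the $\mathcal T_\delta$-layer requires twisting $\widehat T$ into $\widehat T_k$ so that the quaternionic structure extends to all of $\widehat U_k$.
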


To prove this theorem we will construct a vector space of dimension 4I. We will use a quaternionic structure on this space and show that there exists a subspace of dimension I such that the hessian $E_{**}$ is negative definite. 

Let $$S_0 =\bigoplus_{\alpha \in \mathcal{S}_{\delta}}V_{\alpha} , \;\;\;T_0 =\bigoplus_{\beta \in \mathcal{T}_{\delta}}V_{\beta}$$

The spaces $S_0 ,T_0$ are invariant under $J$ and are of  complex dimension $2h$  and $\ell-h$ respectively. Let $U_0 = S_0 \oplus T_0$ and $\widehat{\tau}:T_{\gamma(0)}(G/K) \rightarrow T_{\gamma(1)}(G/K)$ denote the parallel translation with respect to the complex-hat connection. Let $$U = \{ X \in U_0 \cap T_{\gamma(0)}M |\widehat{\tau}(X) \in T_{\gamma(1)}(N) \} $$. 
\begin{proposition}
The minimum complex dimension of $U$ is $\mathcal{I}+h$.
\end{proposition}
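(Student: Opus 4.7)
The plan is to prove this by an elementary codimension/intersection count inside $T_{\gamma(0)}(G/K)$ and $T_{\gamma(1)}(G/K)$, where the only non-trivial input is that the complex-hat parallel transport $\widehat{\tau}$ both commutes with $J$ and preserves orthogonality with $\dot{\gamma}$ (parts (a) and (b) of Proposition \ref{thm.ceh2}). The $+1$ appearing in $\mathcal{I}+h = m+n+\ell+h-2v+1$, over what a naive count would give, is precisely what those two facts together buy.

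First I would check that every space in sight is a complex subspace, so that a bound on the complex dimension of $U$ makes sense. Each real root space $V_{\alpha}$ is $J$-stable by the definition of $J$ in Section \ref{G/P}, hence $U_0 = S_0 \oplus T_0$ is a complex subspace of $T_{\gamma(0)}(G/K)$ of complex dimension $|\mathcal{S}_{\delta}| + |\mathcal{T}_{\delta}| = 2h + (\ell - h) = \ell + h$. The tangent spaces $T_{\gamma(0)}M$ and $T_{\gamma(1)}N$ are $J$-invariant because $M,N$ are complex submanifolds, and $\widehat{\tau}$ is complex-linear by Proposition \ref{thm.ceh2}(a). Consequently $U$ is a complex subspace of $U_0$.

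Next I would perform the dimension count in two stages. At $t=0$, set $A := U_0 \cap T_{\gamma(0)}M$; the standard codimension estimate gives
\[
\dim_{\mathbb{C}} A \;\geq\; \dim_{\mathbb{C}} U_0 + \dim_{\mathbb{C}} T_{\gamma(0)}M - v \;=\; (\ell+h) + m - v.
\]
Because $M$ is complex and $\gamma$ is perpendicular to $M$ at $\gamma(0)$, the whole tangent space $T_{\gamma(0)}M$ lies in $\{\dot{\gamma}(0), J\dot{\gamma}(0)\}^{\perp}$: the second orthogonality follows from $J$-invariance of $T_{\gamma(0)}M$ together with the Hermitian property of the metric. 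By Proposition \ref{thm.ceh2}(b) the complex-hat parallel transport preserves orthogonality with $\dot{\gamma}$, and combining this with $\widehat{\tau}J = J\widehat{\tau}$ shows that $\widehat{\tau}(A)$ lies in $B := \{\dot{\gamma}(1), J\dot{\gamma}(1)\}^{\perp}$, a complex subspace of $T_{\gamma(1)}(G/K)$ of complex dimension $v-1$. The same orthogonality argument applied at the endpoint $\gamma(1)$ places $T_{\gamma(1)}N$ inside $B$ as well. Performing the concluding intersection count inside $B$:
\[
\dim_{\mathbb{C}} U \;=\; \dim_{\mathbb{C}}\!\bigl(\widehat{\tau}(A) \cap T_{\gamma(1)}N\bigr) \;\geq\; \dim_{\mathbb{C}} \widehat{\tau}(A) + n - (v-1) \;\geq\; (\ell+h+m-v) + n - (v-1) \;=\; \mathcal{I} + h.
\]

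The hard part is really just locating the extra $+1$: the naive estimate in the ambient space $T_{\gamma(1)}(G/K)$ (where $T_{\gamma(1)}N$ has codimension $v-n$) yields only $\mathcal{I}+h-1$, and the count only closes because one can confine both $\widehat{\tau}(A)$ and $T_{\gamma(1)}N$ to the same codimension-one complex subspace $B$. This, in turn, is exactly what the complex-hat connection was engineered to make true, so apart from this single observation the proof is pure linear algebra and I do not foresee any further obstacle.
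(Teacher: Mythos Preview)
Your argument is correct and follows the same route as the paper: a codimension count for $U_0\cap T_{\gamma(0)}M$ followed by an intersection at $t=1$, with the extra $+1$ obtained from the fact that the complex-hat transport preserves orthogonality with $\dot\gamma$. If anything you are more explicit than the paper, which simply asserts that ``we can get an extra dimension in the count''; your observation that both $\widehat{\tau}(A)$ and $T_{\gamma(1)}N$ lie in the complex hyperplane $\{\dot\gamma(1),J\dot\gamma(1)\}^{\perp}$ is exactly the content of that assertion.
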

\begin{proof}
Since the dimension of $U_0$ is $\ell+h$ the minimum dimension of $U_0 \cap T_{\gamma(0)}M$ is $\ell+h+m-v$. It is clear that the $$dim_{\mathbb{C}}U \geq dim_{\mathbb{C}}(\widehat{\tau}(U_0 \cap T_{\gamma(0)}M))\cap T_{\gamma(0)}M$$
Since both  $T_{\gamma(0)}M \text{, }T_{\gamma(1)}N$ are perpendicular to $\gamma$ and the complex-hat parallel transport preserves orthogonality with $\gamma$ (part (b) of Proposition \ref{thm.ceh2}) we can get an extra dimension in the count. So the dimension of $U$ is $m+\ell+h-v+n-v+1$. 

 \end{proof}

Let $\dot{\gamma}_{\delta}(0)= aX_{\delta}+bJX_{\delta}$ denote the $V_{\delta}$ component of $\dot{\gamma}(0)$. Lemma \ref{themapI} gives us a linear operator $I_{a,b}:S_0 \rightarrow S_0$ satisfying the following properties, $I^2= -Id$ and $IJ = -JI$ (from now on we omit the subscript). 

Let $S_1 = S_0 \cap U $ and let $dim_{\mathbb{C}}S_1 = s_1$. Using the properties of $I$ we observe that the subspace $S = S_1 \cap IS_1$ is a closed under $I$ and $J$. We see that $ s = dim_{\mathbb{C}}S \geq 2(s_1-h)$ whenever $s_1 > h$ else $S$ may be the trivial space. Now let $T$ be the orthogonal complement of $S_1$ in $U$ and $t =dim_{\mathbb{C}}T$. We observe that $s_1+t \geq \mathcal{I}+h$ and so $t+\frac{1}{2}s \geq \mathcal{I}$.

 Let $\widehat{S}$, $\widehat{T}$ and $\widehat{U}$ be the space of vector fields parallel with respect to the complex-hat connection starting from $S$, $T$ and $U$ respectively. We will use Theorem \ref{thm.ceh3} to show that the index of $E_{**|\widehat{T}}$ is at least t. For $\widehat{S}$ Theorem \ref{thm.ceh3} does not yield anything, instead one must take suitable linear combinations of vector fields in $\widehat{S}$. The index of $E_{**}$ restricted to these linear combinations will be shown to be atleast $\frac{s}{2}$.
 
\subsection{The variations $\widehat{T}$}

\begin{proposition} \label{CEH.T1}
If $Z(t) \in \widehat{T}$ then $E_{**}^{\mathbb{C}}(Z,Z) <  0$ 
\end{proposition}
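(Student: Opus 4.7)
By Theorem~\ref{thm.ceh3}, the inequality $E_{**}^{\mathbb{C}}(Z,Z) < 0$ is equivalent to the single algebraic condition $[Z^{1,0}(0),\dot{\gamma}^{0,1}(0)] \notin \mathfrak{m}^{0,1}$; since a complex-hat-parallel field is determined by its initial value, I will work entirely at $t=0$ in the real-root-space decomposition. My strategy is to exhibit one complex root space $V_{\rho}^{\mathbb{C}}$ which meets $\mathfrak{m}^{0,1}$ trivially and onto which the projection of $[Z^{1,0},\dot{\gamma}^{0,1}]$ is nonzero.

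The witness root is produced in two steps. First, because $T$ is the orthogonal complement of $S_1 = S_0 \cap U$ inside $U$, one has $T \cap S_0 \subseteq T \cap S_1 = 0$, so any nonzero $Z \in T$ has a nonvanishing component along some $V_\beta$ with $\beta \in \mathcal{T}_\delta$. Second, set $\rho := \beta - \delta$. The three defining cases of $\mathcal{T}_\delta$ place $\rho$ either in $\Delta^+$ (when $\beta > \delta$), at the origin with $V_\rho^{\mathbb{C}} = \mathfrak{h}$ (when $\beta = \delta$), or in $\Delta_\kt$ (when $\delta - \beta \in \Delta_\kt$); in each case $V_\rho^{\mathbb{C}} \subseteq \mathfrak{m}^{1,0} \oplus \kt^{\mathbb{C}}$, which meets $\mathfrak{m}^{0,1} = \bigoplus_{\alpha \in \Delta_\mathfrak{m}^+} V_{-\alpha}^{\mathbb{C}}$ trivially. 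The ``main term'' of the projection of $[Z^{1,0},\dot{\gamma}^{0,1}]$ onto $V_\rho^{\mathbb{C}}$ is $[Z_\beta^{1,0},\dot{\gamma}_\delta^{0,1}] = c_{\beta,-\delta}\,Z_\beta^{1,0} \otimes \dot{\gamma}_\delta^{0,1}$, which is nonzero by Proposition~\ref{Chevally}.

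It remains to rule out cancellation by competing brackets $[Z_\mu^{1,0},\dot{\gamma}_\lambda^{0,1}]$ with $(\mu,\lambda) \neq (\beta,\delta)$ and $\mu - \lambda = \rho$, where $\mu$ is in the support of $Z$ (hence in $\mathcal{S}_\delta \cup \mathcal{T}_\delta$) and $\lambda \in \Gamma$. I treat four subcases: (i) $\mu \in \mathcal{T}_\delta \setminus \{\delta\}$ with $\mu \neq \beta$ is directly forbidden by Condition~1; (ii) $\mu = \beta$ with $\lambda \neq \delta$ forces $\lambda = \delta$ and is self-contradictory; (iii) $\mu = \delta$ requires $\lambda = 2\delta - \beta$, and after choosing $\beta$ maximal in the $\mathcal{T}_\delta$-support of $Z$ one has $\lambda = \delta - (\beta - \delta) < \delta$ (or fails to be a root), so $\lambda \notin \Gamma$ by super-minimality; (iv) $\mu \in \mathcal{S}_\delta$ requires $\lambda = \mu + (\delta - \beta) \in \Gamma$, which for $\beta \geq \delta$ gives $\lambda < \mu < \delta$ and again contradicts super-minimality, while the remaining sub-case $\delta - \beta \in \Delta_\kt$ is attacked via Condition~2 (recasting the relation as a sum of $\mathcal{S}_\delta$-type elements and invoking $\mathcal{S}_\delta \cap \mathcal{T}_\delta = \emptyset$).

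\textbf{The main obstacle.} Subcase (iv) in the regime $\delta - \beta \in \Delta_\kt$ is the combinatorial heart of the argument: the resonance $\lambda - \mu = \delta - \beta$ sits inside $\Delta_\kt$, so many elements of $\Gamma$ remain algebraically adjacent and neither super-minimality alone nor Condition~1 suffices. Closing this case cleanly will require using Condition~2 in conjunction with the $T \perp S_1$ constraint on the $\mathcal{S}_\delta$-component $Z_S$, and perhaps a sharper choice of the witness $\beta$. This is exactly the point at which the author's subsequent case-by-case verification of Conditions~1 and~2 in Section~\ref{liecalc} (simply-laced cases) and the promised modifications in the non-simply-laced cases will be essential.
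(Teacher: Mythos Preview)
Your overall plan matches the paper's: reduce via Theorem~\ref{thm.ceh3} to the algebraic condition $[Z^{1,0}(0),\dot\gamma^{0,1}(0)]\notin\mathfrak{m}^{0,1}$ and then exhibit a witness root space. What you are missing is the single step that dissolves your ``main obstacle'' completely. The paper first decomposes $Z(0)=X+Y$ with $X\in S_0$ and $Y\in T_0$ and observes that $[X^{1,0},\dot\gamma^{0,1}(0)]\in\mathfrak{m}^{0,1}$ already; hence it suffices to prove $[Y^{1,0},\dot\gamma^{0,1}(0)]\notin\mathfrak{m}^{0,1}$. This reduction wipes out your subcase~(iv) outright: once the $S_0$-part has been discarded there are no $\mu\in\mathcal{S}_\delta$ left to worry about, and neither Condition~2 nor the orthogonality $T\perp S_1$ is needed here.

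The reason the $S_0$-bracket lands in $\mathfrak{m}^{0,1}$ is exactly super-minimality (this is the remark opening Section~5.2). If $\alpha\in\mathcal{S}_\delta$ and $\lambda\in\Gamma$, then $\alpha<\delta$; the possibility $\lambda<\alpha$ would give $\lambda<\alpha<\delta$, which super-minimality forbids, and $\lambda=\alpha$ contradicts minimality of $\delta$ in $\Gamma$. So either $\alpha$ and $\lambda$ are incomparable and $[E_\alpha,E_{-\lambda}]=0$, or $\alpha<\lambda$ and $E_{\alpha-\lambda}$ lives in a negative-root space. Thus the $X$-part cannot produce any component outside $\mathfrak{m}^{0,1}$, and can be ignored.

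After this reduction the paper's argument for $Y$ is two lines. If some coefficient $c_{\beta_0}\neq 0$ with $\beta_0\in\mathcal{T}_\delta\setminus\{\delta\}$, Condition~1 alone forces the $E_{\beta_0-\delta}$-coefficient of $[Y^{1,0},\dot\gamma^{0,1}]$ to be nonzero, and $E_{\beta_0-\delta}\notin\mathfrak{m}^{0,1}$ since $\beta_0-\delta$ is either positive or lies in $\Delta_{\kt}^{-}$. Otherwise $Y^{1,0}=c_\delta E_\delta$, and $[E_\delta,E_{-\delta}]=t_\delta$ gives a nontrivial $\mathfrak{h}$-component. Your ``choose $\beta$ maximal'' device for subcase~(iii) and the projected appeal to Section~\ref{liecalc} for subcase~(iv) are therefore unnecessary.
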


\begin{proof}  

By Theorem \ref{thm.ceh3} it suffices to show that $[Z(0)^{1,0},\dot{\gamma}^{0,1}(0)] \notin \mathfrak{m}^{0,1}$.
Since $Z(0) \in T$ we can write $Z(0) = X+Y$ where $X \in S$ and $Y \in T$.   
As $[X^{1,0},\dot{\gamma}^{0,1}(0)] \in \mathfrak{m}^{0,1}$ it suffices to prove that $[Y^{1,0},\dot{\gamma}^{0,1}(0)] \notin \mathfrak{m}^{0,1}$.

Suppose $Y^{1,0} = \Sigma_{\beta \in \mathcal{T}_{\delta}}c_{\beta}E_{\beta}$ with $c_{\beta_0} \neq 0$ , for some  $\beta_0 \in \mathcal{T}_{\delta}\backslash \{ \delta \}$ then condition 1 implies that the co-efficient of $E_{\beta_0 - \delta}$ is non-zero in $[Y^{1,0}(0),\dot{\gamma}^{0,1}(0)]$. As either $\beta_0 - \delta >0$ or $|\beta_0 - \delta| \in \Delta_{{\kt}}^+$, implies that $E_{\beta_0 - \delta} \notin \mathfrak{m}^{0,1}$.  


If  $c_{\delta}$ is non-zero and $c_{\beta_0} = 0$ , for all $\beta_0 \in \mathcal{T}_{\delta}\backslash \{ \delta \}$ that implies that $[Y^{1,0},\dot{\gamma}^{0,1}(0)]$ has a non-trivial component in $\mathfrak{h}$, hence $ \notin \mathfrak{m}^{0,1}$ 
 \end{proof}

\begin{theorem} \label{CEH.T}
The index of $\gamma$ when restricted to $\widehat{T}$ is $t$.
\end{theorem}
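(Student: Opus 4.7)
The plan is to upgrade the pointwise statement of Proposition \ref{CEH.T1} -- that $E_{**}^{\mathbb{C}}(Z,Z)<0$ for every nonzero $Z\in\widehat{T}$ -- into a lower bound on the Morse index of $E_{**}|_{\widehat{T}}$ by exploiting the $J$-invariance of $\widehat{T}$. Since $T$ is a complex subspace of $T_{\gamma(0)}(G/K)$ and the complex-hat connection commutes with $J$ by Proposition \ref{thm.ceh2}(a), parallel transport intertwines $J$, so $J\widehat{T}=\widehat{T}$. I equip $\widehat{T}$ with the $J$-invariant inner product obtained by pulling the normal metric at $\gamma(0)$ back through $\widehat{\tau}^{-1}$; with respect to this inner product, $J$ is an isometry and $\langle w,Jw\rangle=0$ for every $w$.

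Suppose for contradiction that $\mathrm{index}(E_{**}|_{\widehat{T}})=k<t$, and let $V^{\ge 0}\subset\widehat{T}$ be a maximal subspace on which $E_{**}\ge 0$; by signature, $\dim_{\mathbb R}V^{\ge 0}=2t-k>t$. The standard dimension count inside the real $2t$-dimensional space $\widehat{T}$ then gives
\begin{align*}
\dim_{\mathbb R}\bigl(V^{\ge 0}\cap JV^{\ge 0}\bigr)\;\ge\;2(2t-k)-2t\;=\;2t-2k\;>\;0.
\end{align*}
Set $W:=V^{\ge 0}\cap JV^{\ge 0}$; since $J^{2}=-\mathrm{Id}$ we have $JW=W$, so $W$ is a nonzero $J$-invariant subspace of $\widehat{T}$ on which $E_{**}\ge 0$.

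Iteratively splitting off $J$-invariant orthogonal planes from $W$ (the orthogonal complement of a $J$-invariant subspace is itself $J$-invariant) produces a real orthonormal basis of $W$ of the form $w_{1},Jw_{1},\ldots,w_{d},Jw_{d}$ with $d=\dim_{\mathbb C}W$. Evaluating the trace of $E_{**}$ on $W$ in this basis gives
\begin{align*}
\mathrm{Tr}\bigl(E_{**}|_W\bigr)\;=\;\sum_{i=1}^{d}\bigl[E_{**}(w_{i},w_{i})+E_{**}(Jw_{i},Jw_{i})\bigr]\;=\;2\sum_{i=1}^{d}E_{**}^{\mathbb{C}}(w_{i},w_{i})\;<\;0
\end{align*}
by Proposition \ref{CEH.T1} applied to each $w_{i}\in W\subset\widehat{T}$. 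On the other hand $W\subset V^{\ge 0}$ forces $\mathrm{Tr}(E_{**}|_W)\ge 0$, a contradiction, so $\mathrm{index}(E_{**}|_{\widehat{T}})\ge t$ as required in the outline of the section. The only delicate step is producing the $J$-adapted orthonormal basis on $W$, which is standard once $J$ is a metric isometry with $\langle w,Jw\rangle=0$; everything else is purely linear-algebraic signature bookkeeping, and no further computation on the geodesic is needed.
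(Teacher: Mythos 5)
Your proof is correct and follows essentially the same route as the paper: both exploit the $J$-invariance of $\widehat{T}$, intersect a too-large subspace with its $J$-image to extract a nonzero $J$-invariant piece, and then invoke Proposition \ref{CEH.T1} on the complex energy hessian to produce a negative direction (the paper uses a single vector $X$ with one of $E_{**}(X,X)$, $E_{**}(JX,JX)$ negative, where you take a trace over a $J$-adapted basis, but this is only a cosmetic difference). Note that, like the paper's own argument, yours establishes the lower bound $\mathrm{index}\ge t$, which is all that is needed downstream.
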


\begin{proof} 

$\widehat{T}$ has a natural complex structure $J$ on it, since $\widehat{\nabla}$ commutes with $J$. To show that the index is $t$, it suffices to show that every $t+1$ dimensional subspace of the 2$t$ dimensional space $\widehat{T}$ has an element $X$ such that $E_{**}(X,X) <  0$. Let $W$ be such a subspace, then $W \cap JW$ is non-empty. Let $X \in W \cap JW$, then from Proposition  \ref{CEH.T1} we know that the complex energy hessian $E_{**}(X,X) + E_{**}(JX,JX) <  0$. Since both $X,JX \in W \cap JW $ the result follows.
 \end{proof}

\subsection{The variations $\widehat{S}$}
If $\alpha \in \mathcal{S}_{\delta}$ then $\alpha < \delta$ and by the choice of $\delta$ that implies that either $\alpha <  \lambda $ or $\alpha$ is not comparable with $\lambda$ for $\lambda \in \Gamma$. If $X(t) \in \widehat{S}$ then $X(0) \in S$ which implies $[X^{1,0}(0),\dot{\gamma}^{0,1}(0)]_{\mathfrak{m}} \in \mathfrak{m}^{0,1}$. Hence $R_{\dot{\gamma}}(X(0))=0$ so Proposition \ref{thm.ceh2} gives us that $X$ is parallel to the canonical connection.

Let $\tau_0^t:T_{\gamma(0)}G/K \rightarrow T_{\gamma(t)}G/K$ denote parallel translation with respect to the canonical connection. We have $I:S \rightarrow S$ where $S \subset T_{\gamma(0)}M$, using parallel translation we can also define $I_t:\tau_0^t(S) \rightarrow \tau_0^t(S)$. Since the invariant tensors $J$ and $[\cdot,\cdot]_{\mathfrak{m}}$, the vector fields $\dot{\gamma}(t)$ and $X(t)$ are all parallel, the properties of $I$ in Lemma \ref{themapI} carry over to $I_t$. Using this, we can now define a natural operation $I$ on the vector fields $\widehat{S}$ as $IX(t) = I_t(X(t))$
We define $$\widehat{S}_k = \{ Z|\nabla_{\dot{\gamma}}Z = -kIZ \text{ and } Z(0) \in S \}$$ On $\widehat{S}_k$ we can define operations $\bar{I}$ and $\bar{J}$. For $Z \in \widehat{S}_k$ we define these operations via initial conditions,$(\bar{I}Z)(0) = I(Z(0)),(\bar{J}Z)(0) = J(Z(0))$. We observe that $\bar{I}^2 = -Id$,$\bar{J}^2 = -Id$ and $\bar{I}\bar{J} = -\bar{J}\bar{I}$.

\begin{proposition} \label{CEH.S0}
For sufficiently small k. 
\begin{align}
E_{**}(Z,Z)+E_{**}(\bar{I}Z,\bar{I}Z)+E_{**}(\bar{J}Z,\bar{J}Z)+E_{**}(\bar{I}\bar{J}Z,\bar{I}\bar{J}Z)< 0 
\end{align}
\end{proposition}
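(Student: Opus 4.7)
The plan is to expand each $E_{**}(W,W)$ for $W\in\{Z,\bar{I}Z,\bar{J}Z,\bar{I}\bar{J}Z\}$ using the second variation formula and to analyze the orbit sum as a polynomial in $k$. Writing $\widetilde{\nabla}_{\dot\gamma}W = -kIW + \tfrac12[\dot\gamma,W]_{\mathfrak{m}}$ and using the curvature formula of Theorem \ref{curvature}, the kinetic-minus-curvature integrand collapses to
$$k^2|W|^2 - k\<IW,[\dot\gamma,W]_{\mathfrak{m}}{\rangle} - |[\dot\gamma,W]_{\kt}|^2.$$
Because each $W(t)$ lies in the canonical-parallel extension $\tau_0^t(S)$ (the operator $e^{-ktI_t}$ preserves $\tau_0^t S$), Lemma \ref{mel}(d) forces $[\dot\gamma,W]_{\kt}\equiv 0$, killing the last summand.

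At $k=0$ each orbit member is canonical-parallel, and since $R_{\dot\gamma}\cdot\equiv0$ on $\tau_0^t S$, each is also complex-hat-parallel. Pairing $(Z,JZ)$ and $(IZ,IJZ)$ and applying Theorem \ref{thm.ceh1} to $Z$ and to $IZ$ gives $\sum_W E_{**}(W,W)\big|_{k=0} = 2E_{**}^{\mathbb{C}}(Z,Z) + 2E_{**}^{\mathbb{C}}(IZ,IZ) = 0$, since the integrand in Theorem \ref{thm.ceh1} vanishes identically in both cases. So the orbit sum is $O(k)$ and it suffices to identify and sign the coefficient of $k$.

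The order-$k$ piece of the integrand sums to $-k\int_0^1\sum_W\<IW,[\dot\gamma,W]_{\mathfrak{m}}{\rangle}\,dt$, which by associativity equals $k\int_0^1\sum_W\<\dot\gamma,[IW,W]_{\mathfrak{m}}{\rangle}\,dt$. A direct computation using $I^2=J^2=-1$, $IJ=-JI$, and the integrability identity Proposition \ref{integrability}(b) to rewrite $[I(JZ),JZ]_{\mathfrak{m}}$ and $[I(IJZ),IJZ]_{\mathfrak{m}}$ reduces the four-term sum at $t=0$ to a positive constant multiple of $\<\dot\gamma(0),[IZ(0),Z(0)]_{\mathfrak{m}}{\rangle}$, plus pairings of $V_\lambda$-components of $\dot\gamma(0)$ (for $\lambda\in\Gamma\setminus\{\delta\}$) against brackets of elements of $S_0$. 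Any such bracket lies in $\bigoplus V_{\alpha\pm\beta}$ with $\alpha,\beta\in\mathcal{S}_\delta$; Condition 2 rules out $\alpha+\beta\in\Gamma\setminus\{\delta\}$, and superminimality of $\delta$ rules out $\alpha-\beta\in\Gamma$ (since it would be a positive root strictly less than $\delta$). So only the pairing against the $V_\delta$-component $\widetilde{X}_\delta=aX_\delta+bJX_\delta$ survives, and Lemma \ref{themapI}(d) bounds this by $-N_0(a^2+b^2)^{1/2}|Z|^2$ per orbit term. Canonical parallelism of $\dot\gamma,I,J,[\cdot,\cdot]_{\mathfrak{m}}$ transports the bound to every $t$.

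It remains to deal with the boundary-term orbit sum $\sum_W\<\widetilde{\nabla}_W W,\dot\gamma{\rangle}\big|_0^1$ and the manifest $4k^2|Z|^2$ contribution. Pairing $(W,\bar{J}W)$ and applying the identity of equation (\ref{bdy1}), together with the fact that $\bar{J}W-JW = O(k)$, shows the boundary orbit sum vanishes at $k=0$; its $O(k)$ correction is controlled by the same root-theoretic cancellation and is absorbed into an $O(k^2)$ remainder. Assembling,
$$\sum_W E_{**}(W,W) \leq -C_1 k(a^2+b^2)^{1/2}|Z|^2 + C_2 k^2|Z|^2$$
for positive constants $C_1,C_2$ depending only on $\mathfrak{g}$, which is strictly negative for $0<k<C_1(a^2+b^2)^{1/2}/C_2$. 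The main obstacle is the bookkeeping of the order-$k$ step: carrying out the integrability rewriting, combining the four orbit contributions, and verifying through Conditions 1 and 2 and superminimality that only the pairing against $\widetilde{X}_\delta$ survives. Once this is done, Lemma \ref{themapI}(d) supplies the strict sign, and the rest is a small-$k$ estimate.
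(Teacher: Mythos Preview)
Your overall strategy matches the paper's: expand each $E_{**}(W,W)$ via the second variation formula, reduce the integrand to $k^2|W|^2 - k\langle IW,[\dot\gamma,W]_{\mathfrak m}\rangle - |[\dot\gamma,W]_{\kt}|^2$, use Condition~2 and superminimality so that only the $V_\delta$-component of $\dot\gamma$ contributes to the linear-in-$k$ term, and then invoke Lemma~\ref{themapI}(d). Two points deserve correction.

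First, the claim that the four-term bracket sum reduces to a positive multiple of $\langle\dot\gamma,[IZ,Z]_{\mathfrak m}\rangle$ is unnecessary and, as stated, not substantiated. The paper simply bounds each of the four terms separately: since each $W(t)$ lies in $\tau_0^t(S_0)$, Lemma~\ref{themapI}(d) gives $-k\langle IW,[\dot\gamma_\delta,W]_{\mathfrak m}\rangle \le -kN_0(a^2+b^2)^{1/2}|W|^2$ directly, and the root-theoretic cancellation kills the $(\dot\gamma-\dot\gamma_\delta)$ piece for each $W$ individually. No integrability manipulations are needed. (Also, the claim that $[\dot\gamma,W]_{\kt}=0$ is stronger than required; the paper just uses $-|[\dot\gamma,W]_{\kt}|^2\le 0$.)

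The real gap is your treatment of the boundary sum. The assertion that its $O(k)$ correction is ``controlled by the same root-theoretic cancellation'' cannot be right: the boundary terms are second-fundamental-form expressions $\langle B_M(W,W),\dot\gamma\rangle$ and $\langle B_N(W,W),\dot\gamma\rangle$, determined by the arbitrary extrinsic geometry of $M$ and $N$, and no root-theoretic argument touches these. If there were a genuine $O(k)$ contribution here it would have no universal bound and could overwhelm the $-C_1k|Z|^2$ obtained from the integrand. The paper resolves this by a different observation: from (\ref{Z1})--(\ref{Z4}), at every $t$ the pair $(Z,\bar IZ)$ is an orthogonal transformation of $(X,IX)$, and likewise $(\bar JZ,\bar I\bar JZ)$ of $(JX,JIX)$. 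Since the second fundamental form is symmetric bilinear, this gives
\[
\sum_{W}\langle\widetilde\nabla_W W,\dot\gamma\rangle
=\langle\widetilde\nabla_X X+\widetilde\nabla_{IX}IX+\widetilde\nabla_{JX}JX+\widetilde\nabla_{JIX}JIX,\dot\gamma\rangle
\]
\emph{exactly}, for every $k$. Equation~(\ref{bdy90210}) then reduces the right-hand side to $\langle -J[JX,X]_{\mathfrak m},\dot\gamma\rangle|_0^1 + \langle -J[JIX,IX]_{\mathfrak m},\dot\gamma\rangle|_0^1$, which vanishes because $X$, $IX$, $J$, $[\cdot,\cdot]_{\mathfrak m}$ and $\dot\gamma$ are all canonical-parallel, making each inner product $t$-independent. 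Once you replace your Taylor-in-$k$ treatment of the boundary by this exact cancellation, the rest of your argument goes through.
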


\begin{proof} 
Let $X \in \widehat{S}$ such that $X(0)=Z(0)$. It is easy to verify that 
\begin{align}
Z &= \cos(kt)X-\sin(kt)IX \label{Z1}\\
\bar{I}Z &= \sin(kt)X+\cos(kt)IX \label{Z2}\\
\bar{J}Z &= \cos(kt)JX+\sin(kt)JX \label{Z3}\\
\bar{I}\bar{J}Z &= -\cos(kt)JIX +\sin(kt)JX \label{Z4}
\end{align}
We see that all these vector fields are admissible since $X$ and $IX$ are. We begin by analyzing $E_{**}(Z,Z)$
\begin{align*}
E_{**}(Z,Z)&=\< \nabla_{Z}Z,\dot{\gamma}{\rangle}|_0^1+\int_0^1|\widetilde{\nabla}_{\dot{\gamma}}Z|^2-\< R(\dot{\gamma},Z)Z,\dot{\gamma}{\rangle}dt\\
&=\< \nabla_{Z}Z,\dot{\gamma}{\rangle}|_0^1+\int_0^1|\nabla_{\dot{\gamma}}Z+\frac{1}{2}[\dot{\gamma},Z]_{\mathfrak{m}}|^2-\frac{1}{4}|[\dot{\gamma},Z]_{\mathfrak{m}}|^2-
|[\dot{\gamma},Z]_{{\kt}}|^2dt\\
&=\< \nabla_{Z}Z,\dot{\gamma}{\rangle}|_0^1+ \int_0^1k^2|IZ|^2-k\< IZ,[\dot{\gamma},Z]_{\mathfrak{m}}{\rangle}-
|[\dot{\gamma},Z]_{{\kt}}|^2dt\\
\end{align*}

If $\alpha,\beta \in \mathcal{S}_{\delta}$ then from condition 2 we have that $\alpha + \beta \notin \Gamma \backslash \{ \delta \}$ and since $\delta$ is superminimal, we have that $\eta < \alpha$ implies $\eta \notin \Gamma$. So we have $\alpha -\beta \notin \Gamma$. We can now conclude that $\< [Z,IZ]_{\mathfrak{m}},\dot{\gamma}(0)-\dot{\gamma}_{\delta}(0){\rangle} = 0$. 

So for sufficiently small $k$ we have 
\begin{align} 
E_{**}(Z,Z)&= \< \nabla_{Z}Z,\dot{\gamma}{\rangle}|_0^1 + \int_0^1k^2|IZ|^2-k\< IZ,[\dot{\gamma}_{\delta},Z]_{\mathfrak{m}}{\rangle}-|[\dot{\gamma},Z]_{{\kt}}|^2 \nonumber \\
&<  \< \nabla_{Z}Z,\dot{\gamma}{\rangle}|_0^1 + \int_0^1k^2|Z|^2-kN_0(a^2+b^2)^{\frac{1}{2}}|Z|^2-|[\dot{\gamma},Z]_{{\kt}}|^2 \nonumber \\
&<  \< \nabla_{Z}Z,\dot{\gamma}{\rangle}|_0^1 \label{CEH.S1}
\end{align}

To get the inequality we use part (d) of Lemma \ref{themapI}. We can get a similar calculation for  $E_{**}(\bar{I}Z,\bar{I}Z),E_{**}(\bar{J}Z,\bar{J}Z) \text{ and }E_{**}(\bar{I}\bar{J}Z,\bar{I}\bar{J}Z)$. We now calculate the corresponding boundary terms.
\begin{align}
\< \nabla_{Z}Z,\dot{\gamma}{\rangle}&+\< \nabla_{\bar{J}Z}\bar{J}Z,\dot{\gamma}{\rangle}+\< \nabla_{\bar{I}Z}\bar{I}Z,\dot{\gamma}{\rangle}+\< \nabla_{\bar{I}\bar{J}Z}\bar{I}\bar{J}Z,\dot{\gamma}{\rangle}|_0^1 \nonumber\\
&=\< \nabla_{X}X,\dot{\gamma}{\rangle}+\< \nabla_{JX}JX,\dot{\gamma}{\rangle}+\< \nabla_{IX}IX,\dot{\gamma}{\rangle}+\< \nabla_{IJX}IJX,\dot{\gamma}{\rangle}|_0^1 \nonumber\\
&=\< [-JX,X]_{\mathfrak{m}},\dot{\gamma}{\rangle}|_0^1+\< [-JIX,IX]_{\mathfrak{m}},\dot{\gamma}{\rangle}|_0^1 \nonumber \\
&=0 \label{CEH.S2}
\end{align}
In the third line we have used equation \ref{bdy90210} from section 4. For the last line we observe that $X$, $[\cdot,\cdot]_{\mathfrak{m}}$ and $J$ are parallel to the canonical connection, which implies that $\< [-JX,X]_{\mathfrak{m}},\dot{\gamma}{\rangle}$ and $\< [-JIX,IX]_{\mathfrak{m}},\dot{\gamma}{\rangle}$ are independent of $t$. 

Using the inequality (\ref{CEH.S1}) and eqn (\ref{CEH.S2}) the proposition is proved. 

 \end{proof}

\begin{theorem}\label{CEH.S} 
For sufficiently small k the index of $\gamma$ when restricted to $\widehat{S}_k$ is $\frac{s}{2}$.
\end{theorem}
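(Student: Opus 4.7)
The plan is to mirror the strategy used for Theorem~\ref{CEH.T}, but with a quaternionic averaging in place of the complex (i.e.\ $J$-) averaging. The space $\widehat{S}_k$ is a real vector space of dimension $2s$, carrying anticommuting real-linear automorphisms $\bar{I}$ and $\bar{J}$ with $\bar{I}^2 = \bar{J}^2 = -\mathrm{Id}$. Proposition~\ref{CEH.S0} supplies a quaternionic averaging identity for the Hessian $E_{**}$. The goal is to convert this into an upper bound of $3s/2$ on the real dimension of any subspace of $\widehat{S}_k$ on which $E_{**}$ is positive semi-definite; the lower bound $s/2$ on the Morse index then follows from the identity (maximal positive-semi-definite dimension) $+$ (index) $= \dim_{\mathbb{R}}\widehat{S}_k = 2s$.

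The first step is to verify that $\bar{I}$, $\bar{J}$, and $\bar{I}\bar{J}$ are well-defined linear automorphisms of $\widehat{S}_k$. They are prescribed by their action at $t=0$, and $S$ was constructed precisely as a subspace closed under both $I$ and $J$ (it sits inside $S_1\cap IS_1$). The parallel-transport discussion that precedes Proposition~\ref{CEH.S0} then shows that the quaternionic relations hold not only on the initial fibre $S$ but on all of $\widehat{S}_k$.

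The second step is the dimension count. Let $W \subset \widehat{S}_k$ be a real subspace on which $E_{**}$ is positive semi-definite, and put $d = \dim_{\mathbb{R}} W$. Since $\bar{I}$, $\bar{J}$, $\bar{I}\bar{J}$ are automorphisms of the $2s$-dimensional space $\widehat{S}_k$, iterating the inequality $\dim(A\cap B) \geq \dim A + \dim B - 2s$ three times yields
$$\dim_{\mathbb{R}}\bigl(W \cap \bar{I}W \cap \bar{J}W \cap \bar{I}\bar{J}W\bigr) \;\geq\; 4d - 6s.$$
If $d > 3s/2$ the intersection is nonzero, so there exists $Z \neq 0$ with all four vectors $Z,\bar{I}Z,\bar{J}Z,\bar{I}\bar{J}Z$ lying in $W$. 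But then Proposition~\ref{CEH.S0} gives
$$E_{**}(Z,Z)+E_{**}(\bar{I}Z,\bar{I}Z)+E_{**}(\bar{J}Z,\bar{J}Z)+E_{**}(\bar{I}\bar{J}Z,\bar{I}\bar{J}Z) \;<\; 0,$$
forcing at least one of the four summands to be negative, in contradiction with $E_{**}|_W \geq 0$. Hence $d \leq 3s/2$ and the index is at least $2s - 3s/2 = s/2$.

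The only non-routine point, once the ambient material is in place, is the verification that the quaternionic operators $\bar{I},\bar{J}$ genuinely preserve $\widehat{S}_k$ as an ambient $2s$-dimensional space, so that the intersection dimension formula can be applied uniformly; this is why I would spend the first paragraph of the actual proof pinning down the action of $\bar{I}$ and $\bar{J}$ on solutions of the defining ODE $\nabla_{\dot\gamma}Z = -kIZ$. Everything else is the quaternionic analogue of the complex dimension count already used successfully in Theorem~\ref{CEH.T}.
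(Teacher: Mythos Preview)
Your proof is correct and follows essentially the same approach as the paper's: both use the quaternionic structure $\bar{I},\bar{J}$ on the $2s$-dimensional space $\widehat{S}_k$ together with Proposition~\ref{CEH.S0} to show that any subspace of real dimension exceeding $3s/2$ must contain a vector with negative Hessian. The only cosmetic difference is that the paper organizes the dimension count in two stages (first passing to a $\bar{J}$-invariant subspace $W\cap\bar{J}W$, then to one that is also $\bar{I}$-invariant), whereas you intersect $W$ directly with $\bar{I}W$, $\bar{J}W$, and $\bar{I}\bar{J}W$ in one pass; the two counts yield the same bound.
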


\begin{proof} 
The real dimension of $\widehat{S}_k$ is $2s$, so its suffices to show that for any $3\frac{s}{2}+1$ real dimensional subspace $W$, there exists $Z \in \widehat{S}_k$ such that $E_{**}(Z,Z) <  0$. By the properties of $\bar{I}$ and $\bar{J}$ we know that there exist $Z \in W$ such that $Z$, $\bar{I}Z$, $\bar{J}Z$, $\bar{I}\bar{J}Z$ belong to $W$. From Proposition \ref{CEH.S0} the Theorem follows .
 \end{proof} 

\textbf{Note} The content in this subsection is presented so that the reader has adequate motivation for the calculations below. The proof of the Theorem \ref{realmainthm} can be written without reference to the work in this section. 
 
\subsection{Reconciling $\widehat{S}_k$ and $\widehat{T}$}
In the Theorems \ref{CEH.T} and \ref{CEH.S} we have shown the existence of two subspaces of dimension $t$ and $\frac{s}{2}$ belonging to $\widehat{T}$ and $\widehat{S}_k$ such that $E_{**}$ is negative definite. But that is not sufficient to show that the index is $\mathcal{I} = t+\frac{s}{2}$. We will tackle this problem by twisting the space $\widehat{T}$.  

Let $\widehat{T}_k = \{ Z|Z = \cos(kt)X-\sin(kt)Y \text{ with } X,Y \in \widehat{T} \}$. From eqns (\ref{Z1}) to (\ref{Z4}) we observe that $\widehat{S}_k = \{ Z|Z = \cos(kt)W-\sin(kt)IW \text{ with } W \in \widehat{S} \}$. We now define $\widehat{U}_k = \widehat{S}_k \bigoplus \widehat{T}_k$. 
So if $Z \in \widehat{U}_k$ then $Z = cos(kt)(X) - sin(kt)(Y)$ for some $X,Y \in \widehat{U}$.  

We can define two operations on $\widehat{U}_k$, $\bar{I}Z = \sin(kt)(X+W)+\cos(kt)(Y+IW)$ and $\bar{J}Z = \cos(kt)J(X+W)+\sin(kt)J(Y+IW)$. Observe that $\widehat{U}_k$ is closed under the two operations $\bar{I}$ and $\bar{J}$. It can be easily verified that $\bar{I}^2 = -Id$ and $\bar{J}^2 = -Id$ and that $\bar{I}\bar{J} = -\bar{J}\bar{I}$. From eqns (\ref{Z1}) to (\ref{Z4}), we see that the definitions of $\bar{I}$ and $\bar{J}$ coincide. 

\noindent 
Recall the complex energy hessian $E_{**}^{\mathbb{C}}(X,X) = E_{**}(X,X)+E_{**}(JX,JX)$. 
\noindent
For $Z \in \widehat{U}_k$ we  define $$Q(Z) =E_{**}(Z,Z)+E_{**}(\bar{I}Z,\bar{I}Z)+E_{**}(\bar{J}Z,\bar{J}Z)+E_{**}(\bar{J}\bar{I}Z,\bar{J}\bar{I}Z)$$

Now we are ready to prove an important proposition. 

\begin{proposition} 
There exists a $k>0$ such that for any $Z \in \widehat{U}_k$, $Q(Z) <  0$ 
\label{pvf.thm}
\end{proposition}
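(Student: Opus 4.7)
The plan is to view $Q$ as a quadratic form on $\widehat{U}_k = \widehat{S}_k \oplus \widehat{T}_k$, show that $Q$ is negative definite on each summand, and control the cross terms for small $k>0$.

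First, I parameterize $\widehat{U}_k$ by triples $(X, Y, W) \in \widehat{T} \times \widehat{T} \times \widehat{S}$ via $Z = \cos(kt)(X + W) - \sin(kt)(Y + IW)$. A direct calculation using $I^2 = -\mathrm{Id}$, $IJ = -JI$, and the $J$-invariance of $S, T$ shows that $\bar{I}$ acts as $(X, Y, W) \mapsto (Y, -X, IW)$ and $\bar{J}$ as $(X, Y, W) \mapsto (JX, -JY, JW)$, so each of $\widehat{S}_k = \{(0, 0, W)\}$ and $\widehat{T}_k = \{(X, Y, 0)\}$ is closed under the quaternionic action.

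On the $\widehat{S}_k$ summand, Proposition \ref{CEH.S0} applies directly, giving the quantitative bound $Q(Z_S) \leq -C_S k \|W\|^2$ for small $k > 0$, where $C_S > 0$ is extracted from Lemma \ref{themapI}(d). On the $\widehat{T}_k$ summand, the four quaternion-orbit elements at $k = 0$ reduce to $X, Y, JX, -JY$, and bilinearity together with $E_{**}(-JY, -JY) = E_{**}(JY, JY)$ gives
\[ Q(Z_T)\big|_{k=0} \;=\; E_{**}^{\mathbb{C}}(X, X) + E_{**}^{\mathbb{C}}(Y, Y), \]
which is strictly negative by Proposition \ref{CEH.T1}. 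By continuity in $k$, $Q(Z_T) \leq -C_T(\|X\|^2 + \|Y\|^2)$ with $C_T > 0$ uniform in $k$ in a neighborhood of zero.

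The cross term $Q(Z_S, Z_T)$ is the main obstacle. Since $Z_S(0) \in S$ and $Z_T(0) \in T$ lie in orthogonal root-subspaces (as $\mathcal{S}_\delta$ and $\mathcal{T}_\delta$ are disjoint), Conditions 1 and 2 together with the superminimality of $\delta$ should force the bracket terms entering the averaged bilinear form to vanish at $k = 0$, yielding an estimate of the form $|Q(Z_S, Z_T)| = O(k) \cdot \|W\| \cdot (\|X\|^2 + \|Y\|^2)^{1/2}$. Completing the square in
\[ Q(Z) \;\leq\; -C_S k \|W\|^2 + 2Ck \|W\|(\|X\|^2 + \|Y\|^2)^{1/2} - C_T(\|X\|^2 + \|Y\|^2) \]
then yields negative definiteness for $0 < k < 4C_S C_T/C^2$. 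The delicate point is that since the diagonal $\widehat{S}_k$ bound is only of order $k$, the cross bilinear form must genuinely vanish at $k=0$ (not merely be bounded), which is precisely what the combinatorial conditions on $\mathcal{S}_\delta$ and $\mathcal{T}_\delta$ are designed to guarantee.
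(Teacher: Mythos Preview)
Your overall strategy matches the paper's: decompose $Q$ over $\widehat S_k \oplus \widehat T_k$, bound the two diagonal blocks, and show the cross block is $O(k)$, then complete the square. The paper executes this via an explicit formula (its Lemma~\ref{pvfthm1}) expressing $Q(Z)$ as $E_{**}^{\mathbb C}(X+W,X+W)+E_{**}^{\mathbb C}(Y+IW,Y+IW)$ plus terms carrying explicit factors of $k$, together with the quantitative bound of Lemma~\ref{CEH.T2}; your compactness-in-$k$ argument for the $\widehat T_k$ block is a legitimate alternative to Lemma~\ref{CEH.T2}, since $\widehat T\times\widehat T$ is finite-dimensional.

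The gap is in your justification of the cross-term estimate. Conditions~1 and~2 and the disjointness of $\mathcal S_\delta,\mathcal T_\delta$ play \emph{no role} in controlling $Q(Z_S,Z_T)$; those conditions are used only for the diagonal blocks (Condition~1 in Proposition~\ref{CEH.T1}, Condition~2 in Lemma~\ref{themapI2}(b)). What actually forces the cross term to vanish at $k=0$ is this: at $k=0$ one has $Q(Z)=E_{**}^{\mathbb C}(X+W)+E_{**}^{\mathbb C}(Y+IW)$, and then $E_{**}^{\mathbb C}(X+W)=E_{**}^{\mathbb C}(X)$ because the bilinearized form $E_{**}^{\mathbb C}(\,\cdot\,,W)$, read off from Theorem~\ref{thm.ceh1}, vanishes identically once $R_{\dot\gamma}W=0$ and $[W,\dot\gamma]_{\kt}=[JW,\dot\gamma]_{\kt}=0$. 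These vanishings hold for $W\in\widehat S$ because $[W^{1,0}(0),\dot\gamma^{0,1}(0)]\in\mathfrak m^{0,1}$, which is a consequence of the superminimality of $\delta$ via Lemma~\ref{mel}, not of root-space orthogonality. Without supplying this argument (or, equivalently, the averaging computation of Lemma~\ref{pvfthm1} that shows every other term in $Q$ already carries a factor of $k$), your claimed bound $|Q(Z_S,Z_T)|=O(k)\|W\|(\|X\|^2+\|Y\|^2)^{1/2}$ is unproven.
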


We first state and prove a few lemmas 

\begin{lemma} \label{CEH.T2}
If $X \in T$ then there exists $M >0$ such that $\frac{1}{2}||R_{\dot{\gamma}}(X(t)|^2+|[X(t),\dot{\gamma}]_{\kt}|^2+|[JX(t),\dot{\gamma}]_{\kt}|^2  > M|X(t)|^2$ for any $t\in [0,1]$ where $X(t)$ is a vector field which is parallel with respect to the complex-hat connection with initial value $X$. As a consequence $E_{**}(X,X)+E_{**}(JX,JX) > M\int_0^1 |X(t)|^2 dt$
\end{lemma}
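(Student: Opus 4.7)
The plan is to reduce the pointwise estimate to a compactness argument. Choose a frame along $\gamma$ that is parallel with respect to the canonical connection. Relative to such a frame, $\dot\gamma$ has constant components by Theorem \ref{cc21}, and every $G$-invariant tensor — in particular $J$, $[\cdot,\cdot]_{\mathfrak{m}}$, $[\cdot,\cdot]_{\kt}$, and the metric — has constant components by Corollary \ref{cc3}. The complex-hat parallel transport equation $\nabla_{\dot\gamma}X = -\tfrac{1}{2}R_{\dot\gamma}(X)$ therefore becomes a linear ODE with constant coefficients, $\dot X(t) = AX(t)$ with $A := -\tfrac{1}{2}R_{\dot\gamma(0)}$, so that $X(t) = e^{tA}X(0)$. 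Moreover the integrand
\[
g(W) \;:=\; \tfrac{1}{2}\,|R_{\dot\gamma(0)}(W)|^2 + |[W,\dot\gamma(0)]_{\kt}|^2 + |[JW,\dot\gamma(0)]_{\kt}|^2
\]
becomes a fixed, time-independent, nonnegative quadratic form on $\mathfrak{m}$, and the quantity to be bounded below is simply $g(X(t))$.

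The next step is to analyze the zero locus $Z = \{W \in \mathfrak{m} : g(W) = 0\}$. By parts (b) and (d) of Lemma \ref{mel}, $W \in Z$ is equivalent to $[W^{1,0},\dot\gamma^{0,1}(0)] \in \mathfrak{m}^{0,1}$, which in particular forces $R_{\dot\gamma(0)}(W) = 0$; hence $Z \subset \ker A$. Consequently $e^{tA}$ fixes $Z$ pointwise, so whenever $Y \in e^{tA}(T) \cap Z$ we have $Y = e^{-tA}(Y) \in T$, giving $Y \in T \cap Z$. The argument in the proof of Proposition \ref{CEH.T1} shows precisely that every nonzero $X \in T$ satisfies $[X^{1,0},\dot\gamma^{0,1}(0)] \notin \mathfrak{m}^{0,1}$, i.e.\ $T \cap Z = \{0\}$. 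Combined, these give $e^{tA}(T) \cap Z = \{0\}$ for every $t \in [0,1]$, and hence $g(X(t)) > 0$ whenever $X(0) \in T \setminus \{0\}$.

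A standard compactness argument then closes the estimate: the function
\[
\Phi(X_0,t) \;=\; \frac{g(e^{tA}X_0)}{|e^{tA}X_0|^2}
\]
is continuous and strictly positive on the compact set $\{X_0 \in T : |X_0| = 1\} \times [0,1]$, so it attains a positive minimum $M > 0$. By homogeneity this gives $g(X(t)) \geq M|X(t)|^2$ for all $X(0) \in T$ and all $t \in [0,1]$, which is the pointwise inequality claimed. The integral form of the bound follows immediately by integrating this inequality over $[0,1]$ and invoking Theorem \ref{thm.ceh1}, which identifies the integral of $g(X(t))$ with $-E_{**}^{\mathbb{C}}(X,X) = -\tfrac{1}{2}(E_{**}(X,X)+E_{**}(JX,JX))$.

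The main obstacle — and the key observation that makes everything work — is the passage from positivity of $g$ at $t=0$ to uniform positivity along the whole geodesic. \emph{A priori}, the complex-hat flow $e^{tA}$ could carry a nonzero $X(0) \in T$ into $Z$ at some later time, which would destroy the compactness step. What saves the argument is that $Z \subset \ker A$, so $Z$ is preserved \emph{pointwise} by $e^{tA}$; this is exactly what lets the hypothesis $T \cap Z = \{0\}$ propagate from $t = 0$ to $e^{tA}(T) \cap Z = \{0\}$ for every $t \in [0,1]$.
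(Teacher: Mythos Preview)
Your proof is correct and follows essentially the same approach as the paper's: work in a canonical-parallel frame so that the integrand becomes a fixed quadratic form and the complex-hat transport becomes $e^{tA}$, use Proposition \ref{CEH.T1} (equivalently Theorem \ref{thm.ceh3}) to get positivity at $t=0$, propagate along the geodesic via the constant-coefficient structure, and conclude by compactness. Your observation $Z \subset \ker A$ neatly unifies the paper's two-case argument (the paper separates the cases $R_{\dot\gamma}(X(0)) \neq 0$ and $R_{\dot\gamma}(X(0)) = 0$, invoking Proposition \ref{thm.ceh2}(c) in each), and your compactness step over $\{|X_0|=1\}\times[0,1]$ makes the uniformity of $M$ explicit.
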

\begin{proof}
For $X \in T$ we know from the proof of the above Theorem \ref{CEH.T} \newline that either $\frac{1}{2}|R_{\dot{\gamma}}(X(0)| \neq 0$ or $|[X(0),\dot{\gamma}]_{\kt}|^2+|[JX(0),\dot{\gamma}]_{\kt}|^2) \neq 0$. If the first case is true then we know from Proposition \ref{thm.ceh2} part (c) that $R_{\dot{\gamma}}(X(t)) \neq 0$ for all $t$. If the first case were not true then the vector field $X$ is parallel with respect to the canonical connection and so $|[X(t),\dot{\gamma}]_{\kt}|^2+|[JX(t),\dot{\gamma}]_{\kt}|^2$ is the same for all t because $|[\cdot,\cdot]_{\kt}|$ is an invariant tensor. But at $t=0$ we know that this quantity is non-zero and hence it is non-zero for all $t$. 
As the interval $[0,1]$ is compact the lemma follows. 
 \end{proof}

\begin{lemma}
If $Z = \cos(kt)X-\sin(kt)Y$ with $X,Y \in \widehat{U}$ then 
\begin{align*}
Q(Z) &= E_{**}^{\mathbb{C}}(X,X)+E_{**}^{\mathbb{C}}(Y,Y)\\ 
&+\int_{0}^1 2k^2|X|^2+2k^2|Y|^2+2k\< [Y,X]_{\mathfrak{m}}-[JY,JX]_{\mathfrak{m}},\dot{\gamma}{\rangle}dt
\end{align*}
 \label{pvfthm1}
\end{lemma}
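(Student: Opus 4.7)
The plan is to expand $Q(Z)$ by applying the second variation formula of Section~4 to each of $Z,\bar{I}Z,\bar{J}Z,\bar{J}\bar{I}Z$ separately, sum the four resulting expressions, and then collapse everything using the complex-hat parallelism of $X,Y$ together with the integrability identity. Writing $U=\cos(kt)$, $V=\sin(kt)$, we have $Z=UX-VY$, $\bar{I}Z=VX+UY$, $\bar{J}Z=U(JX)+V(JY)$, $\bar{J}\bar{I}Z=-V(JX)+U(JY)$. Because $X,Y\in\widehat{U}$ are complex-hat parallel (and so are $JX,JY$ by Proposition~\ref{thm.ceh2}(a)), combining $\nabla_{\dot{\gamma}}W=-\tfrac12R_{\dot{\gamma}}(W)$ with $\widetilde{\nabla}=\nabla+\tfrac12[\cdot,\cdot]_{\mathfrak{m}}$ yields the clean identity $\widetilde{\nabla}_{\dot{\gamma}}W=-\tfrac12J[J\dot{\gamma},W]_{\mathfrak{m}}$ for every $W\in\{X,Y,JX,JY\}$, which will be the workhorse of the calculation.

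Next I would expand $|\widetilde{\nabla}_{\dot{\gamma}}(\cdot)|^2$ and the curvature $K(\cdot):=\langle R(\dot{\gamma},\cdot)(\cdot),\dot{\gamma}\rangle$ (Theorem~\ref{curvature}) for each field and add in pairs. For $(Z,\bar{I}Z)$ the cross-terms of types $UV\langle X,Y\rangle$, $UV\langle\widetilde{\nabla}X,\widetilde{\nabla}Y\rangle$, $UV\langle X,\widetilde{\nabla}X\rangle$, etc., cancel between the two summands, and the diagonal pieces combine via $U^2+V^2=1$; the same happens for $(\bar{J}Z,\bar{J}\bar{I}Z)$. What survives in the integrand is
\begin{equation*}
2k^{2}(|X|^{2}+|Y|^{2})+\sum_{W\in\{X,Y,JX,JY\}}\!\bigl(|\widetilde{\nabla}_{\dot{\gamma}}W|^{2}-K(W)\bigr)+2k\bigl[\langle X,\widetilde{\nabla}_{\dot{\gamma}}Y\rangle-\langle Y,\widetilde{\nabla}_{\dot{\gamma}}X\rangle-\langle JX,\widetilde{\nabla}_{\dot{\gamma}}JY\rangle+\langle JY,\widetilde{\nabla}_{\dot{\gamma}}JX\rangle\bigr].
\end{equation*}
For the boundary $\langle\widetilde{\nabla}_{W}W,\dot{\gamma}\rangle|_{0}^{1}$, its pairing with $\dot{\gamma}$ is the normal component of the endpoint-curve acceleration and so is a symmetric quadratic form in the endpoint value $W(0)$, resp.\ $W(1)$; at $t=0$ the four fields are already $X,Y,JX,JY$, and at $t=1$ the same $U^{2}+V^{2}=1$ cancellation makes the pairwise sum equal to $\langle\widetilde{\nabla}_{X}X+\widetilde{\nabla}_{Y}Y+\widetilde{\nabla}_{JX}JX+\widetilde{\nabla}_{JY}JY,\dot{\gamma}\rangle|_{t=1}$. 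Assembling boundary and interior and using $E_{**}^{\mathbb{C}}(W,W)=E_{**}(W,W)+E_{**}(JW,JW)$ as in Proposition~\ref{pvf.thm}, one gets $Q(Z)=E_{**}^{\mathbb{C}}(X,X)+E_{**}^{\mathbb{C}}(Y,Y)+\int_{0}^{1}2k^{2}(|X|^{2}+|Y|^{2})\,dt+(\text{cross-in-}k)$.

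The main obstacle is the algebraic reshaping of the cross-in-$k$ bracket into $2k\langle[Y,X]_{\mathfrak{m}}-[JY,JX]_{\mathfrak{m}},\dot{\gamma}\rangle$. Substituting the formula $\widetilde{\nabla}_{\dot{\gamma}}W=-\tfrac12J[J\dot{\gamma},W]_{\mathfrak{m}}$, then applying skew-adjointness of $J$ together with two applications of bracket associativity~(\ref{associativity}) to each inner product, one rewrites the bracketed sum as $\langle J\dot{\gamma},[JY,X]_{\mathfrak{m}}-[JX,Y]_{\mathfrak{m}}\rangle$. Invoking the integrability identity of Proposition~\ref{integrability}(b) with $(A,B)=(Y,X)$ gives $J\bigl([JY,X]_{\mathfrak{m}}+[Y,JX]_{\mathfrak{m}}\bigr)=[JY,JX]_{\mathfrak{m}}-[Y,X]_{\mathfrak{m}}$; combining this with $-[JX,Y]_{\mathfrak{m}}=[Y,JX]_{\mathfrak{m}}$ and applying $J$ produces $[JY,X]_{\mathfrak{m}}-[JX,Y]_{\mathfrak{m}}=J\bigl([Y,X]_{\mathfrak{m}}-[JY,JX]_{\mathfrak{m}}\bigr)$. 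A final use of $\langle JA,JB\rangle=\langle A,B\rangle$ converts $\langle J\dot{\gamma},J(\cdot)\rangle$ into $\langle\dot{\gamma},\cdot\rangle$, yielding exactly $2k\langle[Y,X]_{\mathfrak{m}}-[JY,JX]_{\mathfrak{m}},\dot{\gamma}\rangle$ and completing the proof.
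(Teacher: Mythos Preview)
Your argument is correct, but it takes a genuinely different route from the paper's. The paper decomposes $E_{**}=H+G$ by passing to the canonical connection $\nabla$ (expanding $|\widetilde{\nabla}_{\dot{\gamma}}W|^{2}=|\nabla_{\dot{\gamma}}W+\tfrac12[\dot{\gamma},W]_{\mathfrak m}|^{2}$ and absorbing the $\tfrac14|[\cdot,\cdot]_{\mathfrak m}|^{2}$ piece into the curvature via Theorem~\ref{curvature}); it then computes $G(Z,Z)+G(\bar{I}Z,\bar{I}Z)$ directly and obtains cross terms $-2k\langle\nabla_{\dot{\gamma}}X,Y\rangle+2k\langle\nabla_{\dot{\gamma}}Y,X\rangle+2k\langle[Y,X]_{\mathfrak m},\dot{\gamma}\rangle$. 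For the $(\bar{J}Z,\bar{J}\bar{I}Z)$ pair the substitution $(X,Y)\mapsto(JX,-JY)$ together with $\nabla J=0$ and $\langle J\cdot,J\cdot\rangle=\langle\cdot,\cdot\rangle$ makes the two $\nabla$-cross terms flip sign, so upon summing they cancel outright, leaving $2k\langle[Y,X]_{\mathfrak m}-[JY,JX]_{\mathfrak m},\dot{\gamma}\rangle$ with no further work. In particular the paper's proof never invokes the complex-hat parallelism of $X,Y$ nor the integrability identity: the lemma as proved there holds for \emph{any} admissible $X,Y$, not just those in $\widehat{U}$. Your route instead stays with the Levi--Civita connection throughout and trades that extra generality for a reuse of the Section~4 identity $\widetilde{\nabla}_{\dot{\gamma}}W=-\tfrac12J[J\dot{\gamma},W]_{\mathfrak m}$; the price is that you then need both the hat-parallel hypothesis and Proposition~\ref{integrability}(b) to massage the cross term into its final form. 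Both arguments are valid; the paper's is slightly more elementary and more general, while yours recycles machinery already in place.
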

\begin{proof}
Let us begin by rewriting the second variational form as the sum of two quadratic forms for ease of computation. 
Now 
\begin{align*}
E_{**}(X,&X)\\
&=\< \widetilde{\nabla}_{X}X,\dot{\gamma}{\rangle}|_0^1+\int_{0}^{1} 
\< \widetilde{\nabla}_{\dot{\gamma}}X,\widetilde{\nabla}_{\dot{\gamma}}X{\rangle}-\< R(\dot{\gamma},X)X,\dot{\gamma}){\rangle}
dt \\
&=\< \widetilde{\nabla}_{X}X,\dot{\gamma}{\rangle}|_0^1+\int_{0}^{1} 
|\nabla_{\dot{\gamma}}X+\frac{1}{2}[\dot{\gamma},X]_\mathfrak{m}|^2-\frac{1}{4}|[X,\dot{\gamma}]_\mathfrak{m}|^2-|[X,\dot{\gamma}]_{\kt}|^2
dt \\
&=\< \widetilde{\nabla}_{X}X,\dot{\gamma}{\rangle}|_0^1+\int_{0}^{1} 
|\nabla_{\dot{\gamma}}X|^2+\< \nabla_{\dot{\gamma}}X,[\dot{\gamma},X]_\mathfrak{m}{\rangle}-|[X,\dot{\gamma}]_{\kt}|^2
dt \\
&=H(X,X)+G(X,X)
\end{align*}
Where $H(X,X)=\< \widetilde{\nabla}_{X}X,\dot{\gamma}{\rangle}|_0^1-\int_{0}^{1}[X,\dot{\gamma}]_{\kt}|^2 dt$ \&
\newline 
$G(X,X)=\int_{0}^{1}|\nabla_{\dot{\gamma}}X|^2+\<\nabla_{\dot{\gamma}}X,[\dot{\gamma},X]_\mathfrak{m}{\rangle}dt$
We observe that $H(\cdot,\cdot)$ is linear with respect to differentiable functions 
and a simple calculation gives us 
\begin{align}
H(Z,Z)+H(\bar{I}Z,\bar{I}Z) = H(X,X)+H(Y,Y) \label{H}
\end{align}

So we focus on $G(Z,Z)+G(\bar{I}Z,\bar{I}Z)$. We start by simplifying 
\begin{align} 
|\nabla_{\dot{\gamma}}Z|^2&+|\nabla_{\dot{\gamma}}\bar{I}Z|^2 
=|\nabla_{\dot{\gamma}}X|^2+|\nabla_{\dot{\gamma}}Y|^2 +k^2|X|^2+k^2|Y|^2-2k\< \cos(kt)\nabla_{\dot{\gamma}}X \nonumber \\
&-\sin(kt)\nabla_{\dot{\gamma}}Y, \bar{I}Z{\rangle} +2k\< \sin(kt)\nabla_{\dot{\gamma}}X+\cos(kt)\nabla_{\dot{\gamma}}Y,Z{\rangle} \nonumber \\ \nonumber
&=|\nabla_{\dot{\gamma}}X|^2+|\nabla_{\dot{\gamma}}Y|^2 +k^2|X|^2+k^2|Y|^2-2k\< \cos^2(kt)\nabla_{\dot{\gamma}}X, Y{\rangle}\\ \nonumber &-2k\< \sin^2(kt)\nabla_{\dot{\gamma}}X, Y{\rangle}+2k\< \cos^2(kt)\nabla_{\dot{\gamma}}Y, X{\rangle}+2k\< \sin^2(kt)\nabla_{\dot{\gamma}}Y, X{\rangle} \nonumber \\
&=|\nabla_{\dot{\gamma}}X|^2+|\nabla_{\dot{\gamma}}Y|^2 +k^2|X|^2+k^2|Y|^2-2k\< \nabla_{\dot{\gamma}}X, Y{\rangle}+2k\< \nabla_{\dot{\gamma}}Y, X{\rangle} \label{G1}
\end{align}
and similar computation leads us to  
\begin{align}
\< \nabla_{\dot{\gamma}}Z,[\dot{\gamma},Z]_{\mathfrak{m}}{\rangle}&+\< \nabla_{\dot{\gamma}}\bar{I}Z,[\dot{\gamma},\bar{I}Z]_{\mathfrak{m}{\rangle}} \nonumber \\
&=\< \nabla_{\dot{\gamma}}X,[\dot{\gamma},X]_{\mathfrak{m}}{\rangle}+\< \nabla_{\dot{\gamma}}Y,[\dot{\gamma},Y]_{\mathfrak{m}}{\rangle}+2k\< [Y,X]_{\mathfrak{m}},\dot{\gamma}{\rangle} \label{G2}
\end{align}

From equation (\ref{G1}) and (\ref{G2}) we arrive at 
\begin{align}
G(Z,Z)+G(\bar{I}Z,\bar{I}Z)& =k\int_0^1 k|X|^2+k|Y|^2-2\< \nabla_{\dot{\gamma}}X, Y{\rangle} \nonumber \\
&+2\< \nabla_{\dot{\gamma}}Y, X{\rangle}+2\< [Y,X]_{\mathfrak{m}},\dot{\gamma}{\rangle}dt \nonumber\\ 
&+ G(X,X)+G(Y,Y) \label{G}
\end{align}
Equations (\ref{H}) and (\ref{G}) give us 
\begin{align} \label{HG}
E_{**}(Z,&Z)+E_{**}(\bar{I}Z,\bar{I}Z) \nonumber \\ =&E_{**}(X,X)+E_{**}(Y,Y)+\int_0^1k^2|X|^2+k^2|Y|^2-2k\< \nabla_{\dot{\gamma}}X, Y{\rangle} \nonumber \\&+2k\< \nabla_{\dot{\gamma}}Y, X{\rangle} +2k\< [Y,X]_{\mathfrak{m}},\dot{\gamma}{\rangle}
\end{align}

Keeping in mind that $\bar{J}Z = \cos(kt)JX+\sin(kt)JY$ and \newline $\bar{J}\bar{I}Z = \cos(kt)JY -\sin(kt)JX$ we can use equation (\ref{HG}) to obtain 
\begin{align*}
E_{**}(\bar{J}Z&,\bar{J}Z)+E_{**}(\bar{I}\bar{J}Z,\bar{I}\bar{J}Z)\\ 
&= E_{**}(JX,JX)+E_{**}(JY,JY)+\int_0^1k^2|JX|^2+k^2|JY|^2\\ 
&-2k\< \nabla_{\dot{\gamma}}JX, J(-Y){\rangle}+2k\< \nabla_{\dot{\gamma}}J(-Y), JX{\rangle}+2k\< [J(-Y),JX]_{\mathfrak{m}},\dot{\gamma {\rangle}} dt\\
&=E_{**}(JX,JX)+E_{**}(JY,JY)+\int_0^1k^2|JX|^2+k^2|JY|^2\\&+2k\< \nabla_{\dot{\gamma}}X,Y{\rangle}-2k\< \nabla_{\dot{\gamma}}Y, X{\rangle}-2k\< [JY,JX]_{\mathfrak{m}},\dot{\gamma}{\rangle} dt
\end{align*}
 The result follows from adding the formulas for $E_{**}(Z,Z)+E_{**}(\bar{I}Z,\bar{I}Z)$ and 
$E_{**}(\bar{J}Z,\bar{J}Z)+E_{**}(\bar{J}\bar{I}Z,\bar{J}\bar{I}Z)$
 \end{proof}

\begin{lemma} \label{themapI2}
Let $P(X(t),Y(t)) = \< [Y(t),X(t)]_{\mathfrak{m}}-[JY(t),JX(t)]_{\mathfrak{m}},\dot{\gamma}{\rangle}$ 
\begin{enumerate}[(a)]
\item There exists a $N$ such that $P(X(t),Y(t))<  N|X(t)||Y(t)|$ for $X,Y \in \widehat{U}_k \; \forall \; t\in \; [0,1]$  
\item $P(W(t),IW(t)) <  -2kN_0(a^2+b^2)^{\frac{1}{2}}|W(t)|^2$
\end{enumerate}
\end{lemma}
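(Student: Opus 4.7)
The plan is to prove the two parts separately, with part (a) being an essentially soft estimate and part (b) being a structural computation that reduces to Lemma \ref{themapI}(d).

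For part (a), my approach will be to exploit that $P$ is an algebraic bilinear pairing: both $[Y,X]_{\mathfrak{m}}$ and $[JY,JX]_{\mathfrak{m}}$ are bilinear in $(X,Y)$, and the Lie bracket is a bounded bilinear map on the finite-dimensional space $\mathfrak{g}$. Parallel transport (whether by the canonical or the complex-hat connection) is an isometry, and $|\dot{\gamma}(t)|$ is constant along the geodesic since $\gamma$ is a geodesic for the Levi-Civita connection. Combining these one obtains a uniform constant $C$ depending only on $\mathfrak{g}$ with $|[Y(t),X(t)]_{\mathfrak{m}}| \leq C|X(t)|\,|Y(t)|$ and similarly for the second term; Cauchy--Schwarz against the fixed-length vector $\dot{\gamma}(t)$ then yields part (a) with $N = 2C|\dot{\gamma}|$.

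For part (b), I would first use Lemma \ref{themapI}(b), which gives $JI = -IJ$, to rewrite
\[
-[JIW,JW]_{\mathfrak{m}} = [IJW,JW]_{\mathfrak{m}} = [IV,V]_{\mathfrak{m}}
\]
where $V := JW$. Thus $P(W,IW) = \< [IW,W]_{\mathfrak{m}},\dot{\gamma}{\rangle} + \< [IV,V]_{\mathfrak{m}},\dot{\gamma}{\rangle}$. Next I want to reduce each pairing with $\dot{\gamma}$ to a pairing with the single component $\dot{\gamma}_{\delta} = \widetilde{X}_{\delta}$. Since $W,IW \in S_0 = \bigoplus_{\alpha\in\mathcal{S}_{\delta}} V_{\alpha}$, the bracket $[IW,W]$ has root-space components only at sums and differences of elements of $\mathcal{S}_{\delta}$ (and possibly in $\mathfrak{h}$ when $\alpha = \beta$, which pairs trivially with $\dot{\gamma} \in \mathfrak{m}$). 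Condition 2 kills contributions at $V_{\alpha+\beta}$ unless $\alpha+\beta = \delta$; the superminimality of $\delta$ kills contributions at $V_{\alpha-\beta}$ since any root strictly below a $\beta \in \mathcal{S}_{\delta}$, and $\beta < \delta$, lies outside $\Gamma$. Only the $V_{\delta}$-component of $\dot{\gamma}$, namely $\widetilde{X}_{\delta} = aX_{\delta}+bJX_{\delta}$, survives the pairing. Applying Lemma \ref{themapI}(d) to $W$ and then to $V = JW$ (noting $|JW|=|W|$) gives
\[
\< [IW,W]_{\mathfrak{m}},\dot{\gamma}{\rangle} \leq -N_0(a^2+b^2)^{1/2}|W|^2,
\]
and the same bound for $V$; adding yields $P(W,IW) \leq -2N_0(a^2+b^2)^{1/2}|W|^2$. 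To extend this from $t=0$ to all $t\in[0,1]$, I would invoke that $W$, being in (the canonical-parallel span coming from) $\widehat{S}_k$ with $R_{\dot{\gamma}}(W(0))=0$, is parallel for the canonical connection by Proposition \ref{thm.ceh2}(c), while $\dot{\gamma}$, $J$ and $[\cdot,\cdot]_{\mathfrak{m}}$ are all canonical-parallel as well (Corollary \ref{cc3}, Theorem \ref{cc21}); hence the identity $I_t = \tau_0^t \circ I \circ \tau_0^{-t}$ makes the whole computation $t$-independent.

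The main obstacle is the bookkeeping in part (b): verifying that every root-space component of $[IW,W]$ and $[IV,V]$ other than the $V_{\delta}$-piece contributes zero to the pairing with $\dot{\gamma}$. This requires using \emph{both} Condition 2 (to handle $\alpha+\beta$) and the superminimality of $\delta$ (to handle $\alpha-\beta$, when positive), plus the observation that Cartan-valued contributions are automatically orthogonal to $\dot{\gamma}\in\mathfrak{m}$. Once that reduction is done, part (d) of Lemma \ref{themapI} supplies the quantitative bound immediately.
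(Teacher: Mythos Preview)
Your proposal is correct and lands in the same place as the paper: reduce the pairing with $\dot{\gamma}$ to the pairing with its $V_{\delta}$-component $\widetilde{X}_{\delta}$, then invoke Lemma \ref{themapI}(d); the extension from $t=0$ to all $t$ via canonical-parallelism of $W$, $\dot{\gamma}$, $J$ and $[\cdot,\cdot]_{\mathfrak m}$ is exactly what the paper does.

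The one genuine difference is in the algebraic reduction for part (b). The paper first rewrites
\[
[IW,W]_{\mathfrak m}-[JIW,JW]_{\mathfrak m}=4\,\mathrm{Re}\,[IW^{1,0},W^{1,0}]_{\mathfrak m},
\]
so that only root-space components at \emph{sums} $\alpha+\beta$ with $\alpha,\beta\in\mathcal{S}_{\delta}$ occur; Condition 2 alone then kills every $\Gamma$-contribution except $V_{\delta}$. Your route, using $IJ=-JI$ to write $P(W,IW)=\langle[IW,W]_{\mathfrak m},\dot\gamma\rangle+\langle[I(JW),JW]_{\mathfrak m},\dot\gamma\rangle$, keeps the real brackets and therefore also produces components at differences $|\alpha-\beta|$, which you then dispose of with the superminimality of $\delta$. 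Both arguments are valid; the paper's $(1,0)$ trick is a bit cleaner because it makes superminimality unnecessary here, while your splitting has the advantage of feeding directly into two applications of Lemma \ref{themapI}(d) (one for $W$, one for $JW$), which is what actually yields the factor $2$ in the bound. One small remark on part (a): you do not need complex-hat transport to be an isometry; the estimate is purely pointwise, since the $G$-invariant tensor $[\cdot,\cdot]_{\mathfrak m}$ has the same operator norm at every point.
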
 

\begin{proof}
\begin{enumerate}[(a)]
\item Clear. 

\end{enumerate}

(b) Since all the objects we deal with are parallel with the canonical connection, it suffices to prove it in the case $t=0$. It is easy to see that $[IW(0),W(0)]_{\mathfrak{m}}-[JIW(0),JW(0)]_{\mathfrak{m}} = 4Re[IW^{1,0},W^{1,0}]_{\mathfrak{m}}$. Now $IW,W \in S_0$ implies that $[W^{1,0},IW^{1,0}]_{\mathfrak{m}} \in V_{\delta}$ via condition 2. As a consequence $\< [W(0),IW(0)]_{\mathfrak{m}}-[JW(0),JIW(0)]_{\mathfrak{m}},\dot{\gamma}(0)-\dot{\gamma}_{\delta}(0){\rangle} =0$. The result now follows form (c) of Lemma \ref{themapI}.

 \end{proof}

\begin{proof} \textit{of Proposition \ref{pvf.thm}}: 
If $Z \in \widehat{U}_k$ there exists $X,Y \in \widehat{T}$ and $W \in \widehat{S}$ so that $Z = cos(kt)(X+W) - sin(kt)(Y+IW)$.
Note that $R_{\dot{\gamma}}W(0) = R_{\dot{\gamma}}(IW(0)) = 0$, so from eqn (\ref{inter}) of Theorem \ref{thm.ceh1} we get that $E_{**}^{\mathbb{C}}(X,X) = E_{**}^{\mathbb{C}}(X+W)$ and $E_{**}^{\mathbb{C}}(Y,Y)=E_{**}^{\mathbb{C}}(Y+IW,Y+IW))$. So now we can rewrite Lemma \ref{pvfthm1}.  
\begin{align*}
Q(Z)&=E_{**}^{\mathbb{C}}(X,X)+E_{**}^{\mathbb{C}}(Y,Y)+\int_0^12k^2(|X+W|^2+|Y+IW|^2)\\&+2k(P(X,Y)+P(X,IW)+P(W,Y)+P(W,IW))dt \nonumber  
\end{align*}
Using part (a) and part (b) of Lemma \ref{themapI2} we have 
 \begin{align*}
 Q(Z) &<  E_{**}^{\mathbb{C}}(X,X)+E_{**}^{\mathbb{C}}(Y,Y)+\int_0^14k^2(|X|^2+|Y|^2)\\ \;\; &+2kN(|X||Y|+|X||W|+|W||Y|)+(8k^2-4k(a^2+b^2)^{\frac{1}{2}})|W|^2dt \\
\end{align*}We choose $\epsilon^2N = 2(a^2+b^2)^{\frac{1}{2}}$ and apply a std inequality.
\begin{align*}
\nonumber Q(Z)&< E_{**}^{\mathbb{C}}(X,X)+E_{**}^{\mathbb{C}}(Y,Y)+\int_0^1(4k^2+kN)(|X|^2+|Y|^2)\\ \;\; &+kN(\frac{|X|^2}{\epsilon^2}+\epsilon^2|W|^2
+\frac{|Y|^2}{\epsilon^2}+\epsilon^2|W|^2)+(8k^2-4k(a^2+b^2)^{\frac{1}{2}})|W|^2dt \\ 
 \nonumber &< \int_0^1(4k^2+kN+\frac{kN}{\epsilon^2})(|X|^2+|Y|^2)+k(8k-2(a^2+b^2)^{\frac{1}{2}})|W|^2dt \\
&+E_{**}^{\mathbb{C}}(X,X)+E_{**}^{\mathbb{C}}(Y,Y)
\end{align*} 
Finally applying Lemma \ref{CEH.T2} we conclude that.
\begin{align*}
Q(Z)&<\int_0^1(4k^2+2kN+\frac{kN}{\epsilon^2}-M)(|X|^2+|Y|^2)\\ \;\; &+k(8k-2(a^2+b^2)^{\frac{1}{2}})|W|^2dt \nonumber 
\end{align*}
It is clear that $k$ and can be chosen small enough so that $Q(Z)$ is negative for all $\bar{Z} \in \widehat{U}_k$. 
 \end{proof}

\begin{proof} \textit{ of Theorem \ref{mainthm}}
To prove this we demonstrate the existence of an $\mathcal{I}$ dimensional subspace of the variation vector fields $\widehat{U}_k$ such that $E_{**}$ is negative definite. We recall that the real dimension of $\widehat{U}_k$ is $4\mathcal{I}$. To show that the index is $\mathcal{I}$ all we have to do is show that for every $3\mathcal{I}+1$ real subspace $W$ there exists a vector field of negative index. As $\bar{J}^2 = -Id $ that implies that in every $3\mathcal{I}+1$ dimensional real space we can find a $2\mathcal{I}+2$ dimensional space which is closed under $\bar{J}$. Now $\bar{I}^2 = -Id$ and $\bar{I}\bar{J} =-\bar{J}\bar{I}$ forces the existence of a 4 dimensional space which is closed under both $\bar{J}$ and $\bar{I}$. 
So we can find a vector field $Z$ such that $\bar{J}Z$ $\bar{I}Z$ and $\bar{J}\bar{I}Z$ belong to $W$. By Theorem \ref{pvf.thm} we know that $Q(Z) <  0$ for uniform k so the hessian is negative for one the following vector fields $Z$, $\bar{J}Z$, $\bar{I}Z$ and $\bar{J}\bar{I}Z$  
 \end{proof}

\section{Lie algebra calculations} 
\label{liecalc}

From Theorem $\ref{mainthm}$ have shown that the index of a geodesic is $\mathcal{I} = m+n-(v-\ell)-v+1$ where $n$, $m$ are the dimensions of the submanifolds $M$, $N$ and $\ell = \frac{1}{2}|\mathcal{S}_{\delta}| + |\mathcal{T}_{\delta}|$, if the roots satisfy certain conditions. For the simply laced Lie algebras $A_r$,$D_r$ and $E_r$ we will show that these conditions are satisfied. For the simple Lie algebra $B_r$ and $C_r$ these conditions are not satisfied when $\delta$ is a short root. In this case we explicitly work with the roots and get over this handicap.

Recall that $\Delta$ is the set of roots for a simple Lie algebra $\mathfrak{g}$. For the rest of this section we assume $\mathfrak{g}$ is not the algebra $G_2$. Let $(\cdot,\cdot)$ be the dual of the Killing form on the Lie algebra $\mathfrak{g}$ restricted to $\mathfrak{h}_{\mathbb{R}}^*$. Normalize $(\cdot,\cdot)$ so that the inner product corresponding to the root system $\Delta$ of $\mathfrak{g}$ is normalized so that the length of each long root is $\sqrt{2}$.

In this Section we refer the reader to \cite{H} for all standard results on root systems and Lie algebras.

\begin{lemma} \label{Weyl-1}
Suppose $\delta$ and $\alpha$ are two unequal roots such that $\delta$ is long then $(\alpha,\delta) > 0$ is $(\alpha,\delta) = 1$
\end{lemma}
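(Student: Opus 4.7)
The plan is to reduce the lemma to the standard classification of pairs of Cartan integers in a root system. With the normalization fixed in the section, namely $(\delta,\delta)=2$ for long roots, the Cartan integer
$$\langle \alpha,\delta\rangle \;=\; \frac{2(\alpha,\delta)}{(\delta,\delta)}$$
simplifies to $(\alpha,\delta)$ itself. Since Cartan integers are always integral, this immediately shows $(\alpha,\delta) \in \mathbb{Z}$, and under the hypothesis $(\alpha,\delta)>0$ it is therefore a positive integer. So the problem reduces to excluding the values $2,3,\ldots$.

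Next I would note that the hypotheses $\alpha \neq \delta$ and $(\alpha,\delta)>0$ together force $\alpha \neq \pm\delta$: positivity of the inner product rules out $\alpha=-\delta$. The standard fact about root systems (see \cite{H}) then says $\langle \alpha,\delta\rangle\,\langle \delta,\alpha\rangle \in \{0,1,2,3\}$, where the bound $3$ is sharp only in type $G_2$. Since the section explicitly excludes $G_2$, we even have the bound $3$ (attained at most by non-orthogonal long/short pairs in types $B,C,F$).

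To finish, suppose for contradiction that $(\alpha,\delta)\geq 2$. Because $\delta$ is long, no root has squared length exceeding $(\delta,\delta)=2$, i.e.\ $(\alpha,\alpha)\leq 2$. Therefore
$$\langle \delta,\alpha\rangle \;=\; \frac{2(\alpha,\delta)}{(\alpha,\alpha)} \;\geq\; (\alpha,\delta) \;\geq\; 2,$$
and since $\langle \alpha,\delta\rangle = (\alpha,\delta) \geq 2$ as well, the product $\langle \alpha,\delta\rangle\,\langle \delta,\alpha\rangle \geq 4$, contradicting the previous paragraph. Hence $(\alpha,\delta)=1$, as claimed.

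There is no real obstacle here; the only things to be careful about are the normalization convention (long roots have squared length $2$, so Cartan integers against $\delta$ coincide with ordinary inner products) and the fact that the exclusion of $G_2$ is what permits the strict bound used to rule out $(\alpha,\delta)\geq 2$. The whole argument is a direct application of material already cited from \cite{H}.
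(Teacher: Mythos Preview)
Your proof is correct and follows essentially the same approach as the paper: both arguments reduce the claim to the standard classification of Cartan integers in \cite{H}, using the normalization $(\delta,\delta)=2$ and the exclusion of $G_2$. The only cosmetic difference is that the paper works with the other Cartan integer $2(\alpha,\delta)/(\alpha,\alpha)$ and case-splits on whether $\alpha$ is long or short (reading the values $1$ and $2$ directly from the table), whereas you use $\langle\alpha,\delta\rangle=(\alpha,\delta)$ together with the product bound $\langle\alpha,\delta\rangle\langle\delta,\alpha\rangle\leq 3$; these are two phrasings of the same classification.
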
 
\begin{proof}

 Define $W_{\delta} = \left \{ \alpha \in \Delta | \delta -\alpha \text{ is a root} \right \}$.

Since $(\alpha,\delta) > 0$ the value $2\frac{(\alpha,\delta)}{(\alpha,\alpha)}$ is either 1 or 2. Using table 1 from [H,sec 9.4] it is clear that $2\frac{(\alpha,\delta)}{(\alpha,\alpha)}$ is 1 if $\alpha$ is long and 2 if $\alpha$ is short. In either case it is easy to verify that $(\alpha,\delta) = 1$. 

 \end{proof}

\begin{lemma} \label{Weyl}
If $\delta \in \Delta$ is a long root then, for $\eta_1,\eta_2 \in W_\delta$, $\eta_1 + \eta_2$ is a root if and only if it is equal to $\delta$. 
\end{lemma}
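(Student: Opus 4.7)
The reverse implication is essentially free: if $\eta_1+\eta_2=\delta$, then $\eta_1+\eta_2$ is a root since $\delta$ is. Equivalently, $\delta-\eta_1=\eta_2$, which is a root by $\eta_2\in W_\delta\subset\Delta$. So the content of the lemma is the forward direction.

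The plan for the forward direction is to compute $(\delta,\eta_1+\eta_2)$ and then appeal to Lemma \ref{Weyl-1}. The first step is to show that $(\delta,\eta)=1$ for every $\eta\in W_\delta$. By definition $\delta-\eta\in\Delta$, and $\delta-\eta\neq\delta$ (since $\eta\neq 0$), so Lemma \ref{Weyl-1} will give $(\delta-\eta,\delta)=1$ once I verify the positivity hypothesis $(\delta-\eta,\delta)>0$, i.e.\ $(\eta,\delta)<(\delta,\delta)=2$. Since $\delta$ is long of squared length $2$ and the root system is reduced, any root $\eta\neq\delta$ satisfies $(\eta,\delta)<2$: either $(\eta,\delta)\leq 0$, in which case the bound is trivial, or $(\eta,\delta)>0$ and Lemma \ref{Weyl-1} applied to $\eta$ itself forces $(\eta,\delta)=1$. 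Either way $(\delta-\eta,\delta)>0$, and Lemma \ref{Weyl-1} yields $(\delta-\eta,\delta)=1$, hence $(\delta,\eta)=1$.

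With this established, $(\delta,\eta_1+\eta_2)=2$. Suppose $\alpha:=\eta_1+\eta_2$ is a root. If $\alpha\neq\delta$, Lemma \ref{Weyl-1} would give $(\alpha,\delta)=1$ (once $(\alpha,\delta)>0$ is known, which is immediate from $(\alpha,\delta)=2$). This contradicts $(\alpha,\delta)=2$, so $\alpha=\delta$.

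The only delicate point is the positivity check $(\delta-\eta,\delta)>0$, which is why the hypothesis that $\delta$ is \emph{long} is needed (so that $(\delta,\delta)=2$ is the maximum squared length and the Cauchy--Schwarz/reducedness argument excludes $(\eta,\delta)\geq 2$ for $\eta\neq\delta$). Once that is in hand, the rest is just two applications of Lemma \ref{Weyl-1}.
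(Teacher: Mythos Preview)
Your proof is correct and follows the same overall strategy as the paper: show $(\delta,\eta)=1$ for every $\eta\in W_\delta$, deduce $(\delta,\eta_1+\eta_2)=2$, and conclude via Lemma~\ref{Weyl-1} that a root with this inner product against $\delta$ must be $\delta$ itself.

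The one genuine difference is in how the positivity $(\delta,\eta)>0$ is obtained. The paper uses Weyl group transitivity on long roots to reduce to the case where $\delta$ is the highest root; then for $\alpha\in W_\delta$ the $\alpha$-string through $\delta$ has $q=0$ (nothing above the highest root) and $p>0$ (since $\delta-\alpha$ is a root), forcing $(\alpha,\delta)>0$. You instead argue directly: either $(\eta,\delta)\leq 0$ or Lemma~\ref{Weyl-1} already gives $(\eta,\delta)=1$, and in both cases $(\delta-\eta,\delta)>0$, so a second application of Lemma~\ref{Weyl-1} to $\delta-\eta$ yields $(\delta,\eta)=1$. Your route is a bit more self-contained (no Weyl group, no root strings), while the paper's reduction to the highest root is conceptually cleaner and makes the positivity transparent. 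Both arrive at the same place with comparable effort.
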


\begin{proof} 

Since the Weyl group is transitive on the set of long roots it suffices to prove this in the case where $\delta$ is the highest root vector. 

Let the $\alpha$-chain of roots through $\delta$ be $$ -p\alpha+\delta, \ldots, \delta, \ldots q\alpha + \delta \; \; \; \text{  where } p,q \geq 0 $$ Then it is well known that $p - q = 2\frac{(\alpha,\delta)}{(\alpha,\alpha)}$. Since $\delta$ is the highest root we have $q=0$. Since $\alpha \in W_\delta$ we have that $p >0$ making $(\alpha ,\delta) > 0$. From Lemma \ref{Weyl-1} $(\alpha ,\delta) = 1$. So if $\eta_1 ,\eta_2 \in W_{\delta}$ then $(\eta_1 + \eta_2,\delta) = 2$. So if $\eta_1+\eta_2$ is a root then it has to be equal to $\delta$. 
 \end{proof}
Recall that 
\begin{align*} 
\mathcal{S}_{\delta} &= \{ \alpha \in \Delta_{\mathfrak{m}}^+ |\alpha <  \delta \text{ and } \delta - \alpha \in \Delta_{\mathfrak{m}}^+  \} \\
\mathcal{T}_{\delta} &= \{ \beta \in \Delta_{\mathfrak{m}}^+ |\beta \geq \delta \} \cup \{ \alpha \in \Delta_{\mathfrak{m}}^+ |\delta - \alpha \in \Delta_{{\kt}} \} 
\end{align*}

\begin{proposition} \label{Weyl1} 
If $\delta$ is a long root then condition 1 and condition 2 are satisfied.
\end{proposition}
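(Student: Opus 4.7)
The plan is to reduce both conditions to Lemma \ref{Weyl} plus an elementary inner-product computation, after first showing that every element of $\mathcal{S}_\delta$ and every element of $\mathcal{T}_\delta\setminus\{\delta\}$ lies in the set $W_\delta=\{\alpha\in\Delta\mid \delta-\alpha\in\Delta\}$. For $\alpha\in\mathcal{S}_\delta$ this is immediate since $\delta-\alpha\in\Delta_\mathfrak{m}^+$ by definition; for $\beta\in\mathcal{T}_\delta\setminus\{\delta\}$ one of the two defining clauses applies, and in either clause ($\beta-\delta\in\Delta^+$ or $\delta-\beta\in\Delta_{\kt}$) the vector $\delta-\beta$ is a genuine root.

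For condition 2 the argument is short. Given $\alpha,\beta\in\mathcal{S}_\delta\subset W_\delta$, Lemma \ref{Weyl} tells us that the sum $\alpha+\beta$ is a root if and only if it equals $\delta$. Since $\Gamma\subset\Delta_\mathfrak{m}^+\subset\Delta$, membership $\alpha+\beta\in\Gamma$ forces $\alpha+\beta$ to be a root, hence $\alpha+\beta=\delta$; the converse is immediate from $\delta\in\Gamma$.

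For condition 1 the plan is a proof by contradiction. Assume $\beta_0-\delta=\beta_1-\lambda$ with $\beta_0\neq\beta_1$ in $\mathcal{T}_\delta\setminus\{\delta\}$ and $\lambda\in\Gamma$; then $\lambda=\beta_1+\delta-\beta_0\neq\delta$. The heart of the argument is the identity $(\alpha,\delta)=1$ for every $\alpha\in W_\delta$ when $\delta$ is long. This is obtained by writing $|\delta-\alpha|^2=2+|\alpha|^2-2(\alpha,\delta)\in\{1,2\}$ and observing that the Cartan integer $2(\alpha,\delta)/(\delta,\delta)=(\alpha,\delta)$ is an integer (since $|\delta|^2=2$), which rules out the half-integer solutions in both the $|\alpha|^2=2$ and $|\alpha|^2=1$ cases. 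Applying this to $\beta_0,\beta_1\in W_\delta$ gives $(\lambda,\delta)=(\beta_1,\delta)+2-(\beta_0,\delta)=2$.

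The contradiction is then extracted from Cauchy--Schwarz: since $\lambda\in\Delta$ has $|\lambda|\leqslant\sqrt{2}$, we get $2=(\lambda,\delta)\leqslant|\lambda|\,|\delta|\leqslant 2$, forcing equality throughout, so $|\lambda|=\sqrt{2}$ and $\lambda$ is a positive scalar multiple of $\delta$, i.e.\ $\lambda=\delta$, contradicting $\lambda\neq\delta$. The only delicate point is the lemma $(\alpha,\delta)=1$ for $\alpha\in W_\delta$; this is where the hypothesis that $\delta$ is long is genuinely used, and is the reason the analogous statement requires a separate case-by-case treatment for short $\delta$ in the $B_r$, $C_r$ and $F_4$ settings. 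Everything else is a routine bookkeeping of root-space membership and a single application of Lemma \ref{Weyl}.
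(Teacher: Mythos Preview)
Your proposal is correct and follows essentially the same strategy as the paper. The one cosmetic difference is in condition~1: the paper observes that both $\beta_1$ and $\delta-\beta_0$ lie in $W_\delta$ and then applies Lemma~\ref{Weyl} directly to this pair to conclude $\lambda=\beta_1+(\delta-\beta_0)=\delta$, whereas you re-derive the key identity $(\alpha,\delta)=1$ for $\alpha\in W_\delta$ (the content of Lemma~\ref{Weyl-1}) and finish with Cauchy--Schwarz---which is precisely how Lemma~\ref{Weyl} itself is proved. So your argument inlines that lemma rather than invoking it, but the underlying mechanism is identical.
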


\begin{proof} 
We first recall condition 1 and 2 

\begin{ass1}
If $\beta_0, \beta_1 \in \mathcal{T}_{\delta} \backslash \{ \delta \}$ with $\beta_0 \neq \beta_1$ then $\beta_0 - \delta  \neq \beta_1 -\lambda $ for $\lambda \in \Gamma$ 
\end{ass1}

\begin{ass1}
If $\alpha, \beta  \in \mathcal{S}_{\delta}$ then $\alpha + \beta \in \Gamma$ iff it is equal to $\delta$.
\end{ass1}

For condition 1, begin by assuming that $\beta_0 - \delta = \beta_1 -\lambda$. So $\lambda = \beta_1 + \delta -\beta_0$ is a root. Observe that $\beta_1$ and $\delta - \beta_0$ belong to $W_{\delta}$. So by the Lemma \ref{Weyl} the only way that $\beta_1 + \delta - \beta_0 $ is a root is if it is equal to $\delta$, that means $\beta_1 = \beta_0$. A contradiction and hence the lemma readily follows. Condition 2 follows trivially. 
 \end{proof}

\begin{proposition} \label{Weyl2} 
If the root system associated to $\mathfrak{g}^{\mathbb{C}}$ is such that all roots are long, the value $\ell = \frac{1}{2}|\mathcal{S}_{\delta}| + |\mathcal{T}_{\delta}|$ is independent of $\delta$ and only dependent on the lie algebra $\mathfrak{g}^{\mathbb{C}}$. The values of $\ell$ are given below 

\begin{enumerate}[(a)]
 \item If $\mathfrak{g}^{\mathbb{C}} =\mathfrak{sl}_{r+1}(\mathbb{C})$  then $\ell = r$
\item If $\mathfrak{g}^{\mathbb{C}} =\mathfrak{so}_{2r}(\mathbb{C})$  then $\ell = 2r-3$
\item If $\mathfrak{g}^{\mathbb{C}} = E_6$ ,$E_7$ ,$E_8$ then $\ell= 11 ,17 \text{ and }29$ respectively 
\end{enumerate}

\end{proposition}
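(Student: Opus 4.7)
The plan is to reformulate $\ell$ as a quantity that manifestly only depends on $\delta$ and the Lie algebra $\mathfrak{g}^{\mathbb{C}}$, and then to invoke transitivity of the Weyl group on roots to reduce the proposition to a single root-system calculation per simply-laced type.

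A positive root $\beta \in \Delta_{\mathfrak{m}}^+$ lies in the disjoint union $\mathcal{S}_\delta \sqcup \mathcal{T}_\delta$ precisely when $\beta - \delta \in \Delta \cup \{0\}$, that is, when $\beta \in W_\delta \cup \{\delta\}$ where $W_\delta := \{\alpha \in \Delta : \delta - \alpha \in \Delta\}$ is the set introduced in the proof of Lemma~\ref{Weyl-1}. The involution $\beta \mapsto \delta - \beta$ acts without fixed points on $W_\delta$ (since $\delta/2$ is never a root) and partitions it into pairs. I would then check that each pair contributes exactly $1$ to $\tfrac{1}{2}|\mathcal{S}_\delta| + |\mathcal{T}_\delta|$. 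For a pair both of whose members are positive, the fact that $\delta \in \Delta_{\mathfrak{m}}^+$ cannot be the sum of two $\Delta_{\kt}^+$ roots means that either both members lie in $\Delta_{\mathfrak{m}}^+$ (contributing $\tfrac12 \cdot 2 = 1$ via $\mathcal{S}_\delta$) or exactly one does (contributing $1$ via the clause $\delta - \alpha \in \Delta_{\kt}$ of $\mathcal{T}_\delta$). For a pair with one positive member $\beta > \delta$ and one negative, writing $\beta = \delta + \gamma$ with $\gamma \in \Delta^+$ shows that the simple-root support of $\beta$ contains that of $\delta$; since $\delta$ has non-zero support on some simple root outside $\Sigma_{\kt}$, so does $\beta$, forcing $\beta \in \Delta_{\mathfrak{m}}^+ \cap \mathcal{T}_\delta$. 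Together with the contribution of $\delta \in \mathcal{T}_\delta$ itself, this yields the parabolic-independent formula $\ell = \tfrac{1}{2}|W_\delta| + 1$.

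Since $W_\delta$ is defined Weyl-equivariantly, $|W_\delta|$ is Weyl-invariant; and in the simply-laced case the Weyl group acts transitively on roots, so $|W_\delta|$ depends only on $\mathfrak{g}^{\mathbb{C}}$. I would evaluate it by taking $\delta = \theta$ the highest root and using the consequence of the root-chain analysis (as in the proof of Lemma~\ref{Weyl}) that for simply-laced long $\delta$ one has $W_\delta = \{\beta \in \Delta : (\beta, \delta) = 1\}$. Partitioning $\Delta$ by the value of $(\,\cdot\,, \theta) \in \{\pm 2, \pm 1, 0\}$ then gives $|\Delta| = 2 + 2|W_\theta| + |\Delta_\theta^{\perp}|$, where $\Delta_\theta^{\perp}$ is the centralizer subsystem of $\theta$. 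Substituting the classical identifications ($A_{r-2}$ for $A_r$; $D_{r-2} \sqcup A_1$ for $D_r$; $A_5$, $D_6$, $E_7$ for $E_6, E_7, E_8$) together with the sizes of these subsystems yields $|W_\theta| = 2(r-1),\ 4(r-2),\ 20,\ 32,\ 56$ and hence $\ell = r,\ 2r-3,\ 11,\ 17,\ 29$ as claimed.

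The main obstacle will be the case analysis for the pairs, in particular verifying that in each mixed-sign pair the positive member automatically sits in $\Delta_{\mathfrak{m}}^+$; once the parabolic-independent formula $\ell = \tfrac{1}{2}|W_\delta| + 1$ is secured, Weyl transitivity and the standard centralizer data for the highest root finish the proof.
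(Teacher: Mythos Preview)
Your proposal is correct and follows essentially the same route as the paper: reduce $\ell$ to $\tfrac12|W_\delta|+1$ (the paper writes this as $|\widetilde{W}_\delta|+1$), invoke Weyl transitivity on roots in the simply-laced case, and then compute $|W_\delta|$ type by type. You in fact supply the pair-by-pair case analysis justifying the identity $\ell = \tfrac12|W_\delta|+1$ that the paper merely asserts, and your evaluation of $|W_\delta|$ via the centralizer subsystem of the highest root is more structural than the paper's explicit listing of $W_\delta$ for $\delta = e_1-e_2$, but these are refinements of the same argument rather than a different approach.
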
 

\begin{proof}

We begin by showing that $\ell$ is independent of $\delta$ and later calculate ' $\ell$' by choosing $\delta$ appropriately.

Define $\widetilde{W}_{\delta} = \left \{  \{\alpha,\beta  \}| \alpha + \beta = \delta, \alpha,\beta \in \Delta \right \}$. It is clear that $|\widetilde{W}_{\delta}| +1 = \frac{1}{2}|\mathcal{S}_{\delta}| + |\mathcal{T}_{\delta}|$. Since the Weyl group acts linearly $|\widetilde{W}_{\delta}|$ is preserved by the Weyl group. As these Lie algebras are simply-laced, it is well known that the Weyl group acts transitively on the roots \cite{H} therefore $|\widetilde{W}_{\delta}|$ is independent of $\delta$. We observe that  $|W_{\delta}| = 2|\widetilde{W}_{\delta}|$ and calculate $|W_{\delta}|$ for a suitable root $\delta$ in each case. We refer to \cite{H} for a description of the respective root systems that we will use.

$\textbf{A}_r$ : \text{The roots are given by } $\Delta = \{ e_i-e_j | 1 \leq i,j \leq r+1 \}$ where $e_i \in \mathbb{R}^{r+1}$ are the standard basis. Let us choose $\delta = e_1 -e_2 $. Then it is clear that $W_{\delta} = \{e_1-e_j| 2 < j \} \cup \{e_k-e_2 | k \leq r+1 \} $. Thus $|W_{\delta}| =  2r-2$,so $\ell = r$

$\textbf{D}_r$ : \text{The roots are given by } $\Delta = \{ \pm e_i \pm e_j |i \neq j, 1 \leq i,j \leq r+1 \}$ where $e_i \in \mathbb{R}^{r+1}$ are the standard basis. Choose $\delta = e_1 -e_2 $. we then have that $W_{\delta} = \{e_1\pm e_j ,e_k \pm e_2 | 2 <  j,k \leq r \} $. So $|W_{\delta}| =  4r-8$

$\textbf{E}_8$ :\text{The roots are given by } \newline $$\Delta = \{ \pm e_i \pm e_j |i \neq j,\text{and } 1 \leq i,j \leq 8 \} \cup \left \{ \frac{1}{2}\sum_{i=1}^8 t_ie_i |t_i= \pm 1 , \prod_{i=1}^8t_i = 1 \right \}$$ where $e_i \in \mathbb{R}^{8}$. Choose $\delta = e_1 -e_2 $. A little computation gives us that 
\begin{align*}
W_{\delta} &= \{e_1\pm e_j| 2 < j\} \cup \{e_k \pm e_2 | k \leq 8 \} \\&\cup \left \{ \frac{1}{2}(e_1-e_2)+{}\frac{1}{2}\sum_{i=3}^8 t_ie_i \Big |t_i= \pm 1 , \prod_{i=3}^8 t_i = 1 \right \}
\end{align*}

\noindent So we have $|W_{\delta}| = 56$ and so $\ell = 29$. A similar calculation for $\textit{E}_6$ \text{ and } $\textit{E}_7$ \text{ can be made}. 

 \end{proof}

\noindent \textit{Proof of Theorem \ref{realmainthm}}

\noindent \textit{Parts i,ii and v} 

In Lie algebras of type A, D and E all roots are long, so parts $i$, $ii$ and $iv$ follow from Theorem \ref{mainthm}, Proposition \ref{Weyl1} and Propsition \ref{Weyl2}.

\noindent \textit{Part iii}) 

\noindent Referring back to \cite{H} we see that the roots of $B_r$ are given by $$\Delta = \{ \pm e_i \pm e_j | 1 \leq i,j \leq r \} \cup \{ \pm e_i | 1 \leq i \leq r\}$$ where $e_i$ are the standard basis of $\mathbb{R}^r$. The positive roots are given by $$\Delta^+ = \{ e_i + e_j | 1 \leq i,j \leq r \} \cup \{ e_i - e_j | 1 \leq i < j \leq r \} \cup \{ e_i | 1 \leq i \leq r\}.$$ Recall that given a $\Gamma \subset \Delta^+$ we chose $\delta \in \Gamma$ to be a superminimal element i.e a minimal element such that  if $\alpha < \beta $ and $\beta < \delta$ then $\alpha \notin \Gamma$. 

Suppose we can choose a superminimal element $\delta$ such that it is a long root. In this case we can repeat the arguments of the the simply laced case, using Proposition \ref{Weyl1} and Theorem \ref{mainthm}. We can make similar calculations and conclude that $\ell = 2r-2$. 

Suppose no superminimal elements are long. In this case we must choose $\delta = e_i$ where $i$ is the largest index such that $e_i \in \Gamma$. Observe that $e_l +e_j \notin \Gamma$ for $i < l < j$ and that $e_l - e_j \notin \Gamma$ for $l < j$. For if any of these statements were not true it would imply that there exists a superminimal long root. 

We now claim that if $e_k \notin \Delta_{\kt} $ for $i < k \leq r$ then $\delta = e_i$ satisfies conditions 1 and 2. To verify the claim we argue as follows. It is clear that $\mathcal{S}_\delta$ is a subset of the following $\{ e_i-e_l| i < l \} \cup \{ e_l|\;i < l \}$. Thus condition 2 can be easily verified. Let $U_\delta =\{ \beta \geq \delta | \beta \in \Delta_{\mathfrak{m}}^+ \}$ and let $V_\delta =\{ \alpha \in \Delta_{\mathfrak{m}}^+ | \delta -\alpha \in \Delta_{\kt}^+ \}$. Then $\mathcal{T}_\delta = U_\delta \cup V_\delta$. We can clearly see that $U_\delta = \{ e_a| a \ leq i\} \cup \{e_i+e_a |\; a<i\}$. Due to the assumption that $e_k \notin \Delta_{\kt}$, we have $V_{\delta}= \{e_b|\; i<b,e_i-e_b \in \Delta^+_{\kt} \} $. Thus $$\mathcal{T}_\delta = U_\delta \cup V_\delta = \{ e_a| a \leq i\} \cup \{e_i+e_a |\; a<i\} \cup \{e_b|\; i<b,e_i-e_b \in \Delta^+_{\kt}\}$$ 
Note that if $\beta_1,\beta_2 \in \mathcal{T}_{\delta}$ then $\beta_1 + \delta - \beta_2$ is a positive root if and only if its equal to $\delta$ or of the form $e_s - e_t$ where $s < t$. To see this note that 
$$\{\delta - \beta| \beta \in \mathcal{T}_\delta\} =\{ -e_a+e_i| a < i \} \cup \{ -e_a | a < i \} \cup \{ e_i-e_b |\;i<b,e_i-e_b \in \Delta^+_{\kt} \}$$ 
We have already observed that roots of the form $e_s - e_t$ do not belong to $\Gamma$ with $s < t$, we now see that condition 1 is satisfied thereby verifying the claim. 
 
To finish the proof we assume that $e_k \in \Delta_{\kt}$ for $k$ such that $i < k$. Let $g_t = exp(tX_{e_k})$ where $X_{e_k}$ is a root vector for the root $e_k$. Since $X_{e_k} \in K$ that implies that $g_t \in K$. Then the left translation $L_{g_t}$ is an non-trivial isometry as well as a biholomorphism on the space $G/K$, that fixes the base point $\bar{e}$. So it suffices to study the index of the geodesic $L_{g_t{_*}}\dot{\gamma}$. For small values of t the set $\Gamma$ associated to this 
geodesic  will contain the long root $e_i -e_k$ by an application of a standard form relating the adjoint action of the Lie group and Lie algebra namely $Ad(exp(tX)Y = Y+t[X,Y]+O(t^2)$\cite{Hel} . So $\delta$ can be chosen in the form $e_p - e_q$ where $p < q$. We can now use the arguments above to prove the Theorem in the main case.

\noindent \textit{Part iv})

Using \cite{H} we note that the roots of $C_r$ are given by $$\Delta = \{ \pm e_i \pm e_j | 1 \leq i,j \leq r \} \cup \{ \pm 2e_i | 1 \leq i \leq r\}$$ where $e_i$ are the standard basis of $\mathbb{R}^r$. The positive roots are given by $$\Delta^+ = \{ e_i + e_j | 1 \leq i,j \leq r \} \cup \{ e_i - e_j | 1 \leq i < j \leq r \} \cup \{ 2e_i | 1 \leq i \leq r\}.$$

Assume we can choose superminimal $\delta \in \Gamma$ such that the root is long, in that case we can again use Proposition \ref{Weyl1} and from a simple calculation conclude that $\ell = r$. 

The only options left are $\delta = e_i+e_j$ or $e_i -e_j$ such that $i<j$. One can 
easily check that conditions 1 and 2 do not hold true for $\mathcal{T}$ and 
$\mathcal{S}$. We will use appropriate subsets $\mathcal{T}^*$ and $\mathcal{S}^*$ of $\mathcal{T}$ and $\mathcal{S}$ such that condition 1 and 2 are satisfied. It is easy to see that we can modify the proof of Theorem \ref{mainthm} by using these subsets.

Lets deal with the case where $\delta = e_i -e_j$ is superminimal for some $i<j$. Here we let $U_{\delta} = \{ e_k - e_j| k <i \} \cup \{ e_i -e_{k'}| j < k' \} \cup \{ 2e_i \} \cup \{ \delta \}$ and $V_{\delta}= \{ e_l - e_j| i<l<j \} \cup \{e_i -e_{l'}| i<l'<j \}$. 

Now $$T^*_{\delta} = T_{\delta} \cap (U_{\delta} \cup V_{\delta})$$

$$S^*_{\delta} = S_{\delta} \cap V_{\delta}$$. We can now verify that condition 1 and 2 are satisfied $T^*_{\delta}$ and $S^*_{\delta}$ 

Now we can assume that roots of the form $e_i -e_j \notin \Gamma$, for if such a root were in $\Gamma$ we could find a superminimal of that form, and proceed as in the earlier case. 

Now we assume that we can choose superminimal elements of the form $e_i + e_j$, here we let $U_{\delta} = \{ e_k + e_j| k <i \} \cup \{ 2e_i \}$ and $V_{\delta}= \{ e_j + e_{l}| i<l , l \neq j  \} \cup \{e_i -e_{l}| i<l, l \neq j\}$. We define $T^*_{\delta}$ and $S^*_{\delta}$ as we did previously and verify that conditions 1 and 2 are satisfied.

\section{Acknowledgment}
This work is based on the my's Phd Thesis at Michigan State University. I wish to thank Michigan State University for the support provided during the pursuit of my doctarate. I would like to thank my Advisor Prof Jon Wolfson for suggesting I investigate this topic and for the many helpfull discssions. 


\end{document}